\newtheorem{lemma}{Lemma}[section]
\newtheorem{theorem}{Theorem}
\numberwithin{equation}{section}
\newtheorem{proposition}[lemma]{Proposition}
\newtheorem{definition}[lemma]{Definition}
\newtheorem{remark}[lemma]{Remark}
\newcommand{\R}{\mathbb{R}}
\newcommand{\N}{\mathbb{N}}
\newcommand{\PP}{\mathbb{P}}
\newcommand{\Z}{\mathbb{Z}}
\newcommand{\C}{\mathbb{C}}
\newcommand{\e}{\varepsilon}
\newcommand{\ep}{\varepsilon}
\newcommand{\calH}{\mathcal{H}}
\newcommand{\calL}{\mathcal{L}}
\newcommand{\rest}{{\, \bf{{  \llcorner}}\,}}
\newcommand{\ds}{\displaystyle} 
\newcommand{\barint}{\overline{\hspace{.65em}}\!\!\!\!\!\!\int}
\newcommand{\calD}{\mathcal{D}}
\newcommand{\beq}{\begin{equation}}
\newcommand{\eeq}{\end{equation}}
\newcommand{\bea}{\begin{eqnarray}}
\newcommand{\eea}{\end{eqnarray}}
\newcommand{\beas}{\begin{eqnarray*}}
\newcommand{\eeas}{\end{eqnarray*}}
\definecolor{darkgreen}{rgb}{0,0.55,0}
\begin{document}

\newtheorem{conjecture}{Conjecture}
\theoremstyle{definition}
\newtheorem{example}{Example}

\title[Sobolev spaces of isometric immersions]
{Sobolev spaces of isometric immersions of arbitrary dimension and co-dimension}
\author{Robert L. Jerrard \and Mohammad Reza Pakzad}
\address{Department of Mathematics, University of Toronto,
Toronto, Ontario, Canada}\email{rjerrard@math.toronto.edu}
\address{Department of Mathematics, University of Pittsburgh,
Pittsburgh, PA}\email{pakzad@pitt.edu}
\begin{abstract}
We prove the $C^{1}_{loc}$ regularity  and developability of $W^{2,p}_{loc}$
isometric immersions of $n$-dimensional flat domains into $\R^{n+k}$ where $p\ge \min\{2k, n\}$.
We also prove similar rigidity and regularity results for scalar functions
of $n$ variables for which the rank of the Hessiam matrix is
{\em a.e.}  bounded by some $k<n$, again assuming
$W^{2,p}_{loc}$ regularity for $p\ge \min \{2k,n\}$. In particular  this includes
results about the
degenerate Monge-Amp\`ere equation, $\det D^2 u = 0$,
corresponding to the case $k=n-1$.
%Another parallel consequence of our methods is a similar regularity and rigidity result for the $W^{2,n}$ solutions of the degenerate  Monge-Amp\`ere equations in $n$ dimensions. The analysis also applies to the situations when the degeneracy is extended to $(k+1)\times (k+1)$ minors of the Hessian matrix  and the solution is  $W^{2,p}$,  with $p\ge \min\{2k, n\}$.
\end{abstract}

\maketitle
\vspace{.3in}
\tableofcontents
%\today

\section{Introduction}

\subsection{Background}
 
The question of rigidity vs. flexibility of isometric immersions has been studied in differential geometry since the end of 19th century. 
It was already known, as established by Darboux, among others, that smooth surfaces in the three dimensional space  which are isometric to a piece of plane are 
{\it developable}, i.e. they are locally  foliated as a ruled surface by straight segments aligned at each point in one of the principal directions. New developments in the mid-20th century highlighted the very fact that this {\it rigidity} statement relies strongly on the regularity of the surface. In particular, it followed from the results of Nash \cite{Nash2} and Kuiper \cite{kuiper} that there exist many $C^1$ isometric embeddings of a given flat $n$-dimensional domain  into $\R^{n+1}$ (and hence into $\R^{n+k}$ for any $k\ge 1$) with arbitrarily small upper bound on the diameter of the image, a property which rules out the developability of the image. On the other hand, 
the  developability of co-dimension one  isometric immersions of flat $n$-dimensional domains
was essentially established by Chern and Lashof \cite[Lemma 2]{ChL}
%, for smooth immersions, 
and  
Hartman and Nirenberg  \cite[Lemma 2]{HartmanNirenberg}, % for $C^2$ mappings,
who also provided more detailed results in the case $n=2$ of surfaces.
In \cite{VWKG}, a generalized developability result for $C^2$ isometric immersions of 
a Euclidean domain $\Omega\subset \R^n$
into Euclidean spaces $\R^{n+k}, k<n$ was established.
%  proved that  $C^2$ -regular isometric immersions have developable images. 

A natural question arises, which consists in asking what would be the critical regularity threshold at which the distinction between {\it rigidity} and {\it flexibility} \'a 
la Nash and Kuiper is withheld. The most straightforward path would be to discuss this question for H\"older regular isometries of class $C^{1,\alpha}$, $0<\alpha<1$. 
Some progress is made in this direction, but the problem of the critical value of  $\alpha$ is still open. While a careful analysis of the iteration methods of Nash and Kuiper 
have lead to flexibility results for surfaces for $\alpha < 1/13$ \cite{bori2} and then for $\alpha<1/7$ \cite{CDS},  it has only been established that $C^{1,\alpha}$ 
isometric immersions of 2 dimensional flat domains into the three dimensional space are rigid if $\alpha > 2/3$ \cite{bori1, bori2, CDS}. 
In a different but related vein, Pogorelov showed   that  $C^1$ surfaces with
total zero curvature  are developable \cite[Chapter II]{Po 56} and \cite[Chapter IX]{Po 73}.  
If one only assumes H\"older regularity, it seems there is no 
consensus on what the critical exponent should be, 
%without additional assumptions such as those imposed by Pogorelov,  
as it has been conjectured to be $\alpha=1/3, 1/2$ or $2/3$.

One could also consider other  function spaces which lie somewhat below $C^2$. 
%A Sobolev $W^{2,p}$ mapping  might not even be $C^1$ regular if $p\le 2$, but some information is available about its second derivatives. 
In particular, Sobolev isometries 
arise in the study of nonlinear elastic thin films. Kirchhoff's plate model 
put forward in the 19th century \cite{kirchhoff} consists in minimizing the $L^2$ norm of the second fundamental form of isometric
immersions of a 2d domain into $\R^3$ under suitable forces or boundary conditions. In other words, using the modern terminology, 
the space of admissible maps for this model is that of $W^{2,2}$ isometric immersions (See also \cite{FJMgeo, lepa_noneuclid}).  

Quite strong results are known about regularity and rigidity of {\em codimension 1} isometric immersions, as summarized in the folllowing

\begin{theorem}\label{0deve-regu}
Let $U\in W^{2,2}(\Omega, \R^{n+1})$ be an isometric immersion, where $\Omega$ is a bounded Lipschitz domain in
$\R^n$. Then $U\in C_{\rm loc}^{1,1/2}(\Omega, \R^{n+1})$. Moreover, for
every $x\in \Omega$, either $D U$ is constant in a neighborhood of $x$, or
there exists a unique $(n-1)$-dimensional hyperplane $\PP\ni x$ of $\R^n$ such
that $D U$ is constant on the connected component of $x$ in $\PP\cap \Omega$.
\end{theorem}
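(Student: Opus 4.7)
The plan is to exploit the algebraic constraint imposed by flatness of the induced metric, then bootstrap to full $C^{1,1/2}$ regularity, and finally extract the foliation by hyperplanes from the rank-one structure of the Gauss map. The starting point is that, since $(DU)^T DU = I$ a.e.\ and $U \in W^{2,2}$, the Gauss equations hold in a distributional sense and force the scalar-valued second fundamental form $A$ associated to the a.e.\ defined unit normal $N$ to satisfy $a_{ij}a_{k\ell}-a_{i\ell}a_{kj}=0$ a.e. Equivalently, $A$ (and hence $DN\in L^2$) has rank at most one a.e. This is the fundamental structural ingredient.

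To upgrade from $W^{2,2}$ to $C^{1,1/2}_{\rm loc}$, I would establish a Morrey-type decay of the form $\int_{B_r(x)}|DN|^2 \le C r^{n-1}$ for all small balls $B_r(x)\subset\Omega$. The gain of one power over the scaling-critical exponent $n-2$ is precisely what the rank-one structure delivers: heuristically, the image of $DN$ is one-dimensional, so the gradient energy concentrates on a codimension-one set, and a Campanato/Morrey argument then yields $N\in C^{0,1/2}_{\rm loc}$. Since the orthonormal frame $DU$ is determined by $N$ together with the intrinsic isometric data, this transfers to $DU\in C^{0,1/2}_{\rm loc}$.

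The developability statement is obtained by studying the $(n-1)$-dimensional kernel distribution $K(x)=\ker A(x)$, defined a.e.\ on the ``curved'' set $\{A\ne 0\}$. The Codazzi equation, interpreted distributionally, implies that $DU$ (and hence $N$) is constant along integral leaves of $K$; the rank-one structure of $DN$ then forces such a leaf to be locally affine, lying in an $(n-1)$-dimensional hyperplane, which by a maximal extension argument must coincide with the connected component of $x$ in $\PP\cap\Omega$ for a uniquely determined $\PP$. At interior points of the set $\{A=0\}$, $DU$ is locally constant, yielding the other alternative; uniqueness of $\PP$ follows because $DU$ constant on two distinct hyperplanes through $x$ would force $A(x)=0$.

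The principal obstacle is justifying the flow of $K$ given only $C^{1,1/2}$ regularity of $DU$, together with the fact that $K$ is defined only where $A\ne 0$: classical ODE theory does not apply, and a bespoke argument --- either via smooth approximation of $U$ by mollification compatible with the isometric constraint, or via direct difference-quotient estimates on the continuous Gauss map --- is needed to transform the pointwise rank-one algebra into an honest geometric foliation. This step is where the codimension-one structure is essential, since in higher codimension no such single scalar form is available and the whole strategy must be rethought, which is presumably what motivates the present paper.
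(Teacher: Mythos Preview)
First, a contextual note: Theorem~\ref{0deve-regu} is quoted here as a prior result of Liu--Pakzad \cite{LP}; the paper does not reprove it directly. It does, however, recover the $C^1$ regularity and the developability (though not the sharper $C^{1,1/2}$ exponent) as the $k=1$ instance of Theorem~\ref{T.isom_immersion}, via completely different methods, so the natural comparison is between your outline and the paper's general machinery specialized to codimension one.

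After the common first step (rank$(Dw)\le 1$ a.e.\ from the Gauss relations, which is exactly Proposition~\ref{isom.-is-degenerate}), the two approaches diverge sharply. You propose regularity first via a Morrey estimate, then developability by integrating the kernel distribution using Codazzi. The paper reverses the order: it uses geometric measure theory --- the rectifiable boundaryless graph current $G_w$, slicing, and Solomon's separation lemma --- to show that level sets of $w=DU$ are $\calH^{n-1}$-a.e.\ planar (Sections~\ref{s:dcm}--\ref{fullrank}), upgrades this to a pointwise weak foliation by a limiting argument (Section~\ref{S:3}), and only then deduces continuity of $w$ from the foliation together with $p$-quasicontinuity (Section~\ref{S:4}). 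Codazzi and any flow of $\ker A$ play no role.

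Your Step~2 contains a real gap. The Morrey decay $\int_{B_r}|DN|^2\le Cr^{n-1}$ does not follow from the pointwise rank-one condition alone; the heuristic ``the image of $DN$ is one-dimensional, so energy concentrates on a codimension-one set'' confuses the rank of $DN(x)$ at each point with the global geometry of its level sets. In fact this decay is essentially a \emph{consequence} of the developability you want to prove (once $N$ depends locally on a single affine coordinate the estimate is trivial), not a route to it; neither \cite{Pak} nor \cite{LP} obtains $C^{1,1/2}$ via an a~priori Morrey bound of this kind, and the present paper gets only $C^1$, and only after the weak foliation is already in hand. The difficulty you correctly flag in Step~3 (flowing a merely H\"older kernel field defined only where $A\ne 0$) is also genuine, but the paper's GMT route sidesteps it entirely: planarity of level sets is read off from the constancy of approximate tangent spaces to rectifiable slices, with no ODE required.
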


This was proved in by Liu and Pakzad \cite{LP}, and 
followed earlier results \cite{Pak} of the second author that established 
the $n=2$ case of Theorem \ref{0deve-regu}, 
drawing on work of Kirchheim in  \cite{kirchheimthesis} on $W^{2,\infty}$ solutions to degenerate Monge-Amp\`ere equations, discussed below.

The  result is optimal is the sense that it fails for $W^{2,p}$ isometries with $p<2$.

\begin{remark} 
%Note that only the $C^1$ regularity result was stated in \cite{Pak} and was a major ingredient of the proof, but the higher H\"older regularity announced here is  an immediate consequence of the developability. 
In \cite{MuPa2} it was established for $n=2$ that the $C^1$ regularity can be extended up to the boundary 
if the domain is of class $C^{1,\alpha}$. This does not hold true anymore for merely $C^1$ regular domains. 
\end{remark} 

Isometric imersions of flat domains are closely related to the degenerate Monge-Amp\`ere
equation
\begin{equation}\label{MA-2d}
\ds {\rm det} (D^2u) =0 \quad \mbox {a.e.  in} \quad  \Omega,
\end{equation}
or more generally to the Hessian rank inequality
\begin{equation}\label{analytic}
\displaystyle \mbox{rank} (D^2 u) \le k \quad \mbox{a.e. in} \,\, \Omega.
\end{equation}   
This is equivalent to the degenerate Monge-Amp\`ere equation when $k=n-1$, but
for $k<n-1$ is a stronger condition. As we recall in Section \ref{S:6}, it is satisfied by the components $U^m$ of an  isometric immersion $U: \Omega \to \R^{n+k}$ of co-dimension $k$ (see Proposition \ref{isom.-is-degenerate}), and many rigidity properties of isometric immersions can be deduced
solely from the weaker condition \eqref{analytic}.

In order to discuss Sobolev solutions with lower regularity than the assumptions of the above theorem, 
it is helpful to study distributional and measure theoretic variants of condition \eqref{MA-2d}
including (in 2-dimensional domains) 
\begin{equation} \label{wMA-2d}
\ds {\rm Det} (D^2u) := -\frac 12 {\rm curl}^T {\rm curl} (Du \otimes Du) =0
\end{equation} 
for $u\in H^1(\Omega)$; or
\begin{equation}
\int_\Omega \phi_{x_1}(x, Du) u_{x_k x_2} - \phi_{x_2}(x,Du) u_{x_k x_1} \ dx = 0
\quad\mbox{ for all }\phi\in C^\infty_c(\Omega\times \R^2)\mbox{ and }k=1,2
\label{swMA-2d}\end{equation} 
for $u\in W^{2,1}(\Omega)$. Both of these imply \eqref{MA-2d} if 
$u\in W^{2,2}_{loc}(\Omega)$.
It turns out that  \eqref{MA-2d},
even in the weak form \eqref{swMA-2d},  is strong enough to imply rigidity, as shown in
the following result.

\begin{theorem}
Let $\Omega$ be a bounded, open subset of $\R^2$.

If $u\in W^{2,2}_{loc}(\Omega)$ and $\det D^2 u=0$ a.e. in $\Omega$, 
then $u\in C^1(\Omega)$ and for every point $x\in \Omega$, there exists either a neighborhood of $x$, 
or a segment passing through $x$ and joining $\partial \Omega$ at  both ends, 
on which $Du$ is constant.

More generally, the same conclusions hold if we merely assume that $u\in W^{2,1}(\Omega)$ and $u$ satisfies 
\eqref{swMA-2d}.
\label{MArigid}\end{theorem}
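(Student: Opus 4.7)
My plan is to first reduce the $W^{2,2}_{loc}$ case to the (formally stronger) $W^{2,1}$ statement with distributional hypothesis \eqref{swMA-2d}. For $u \in W^{2,2}_{loc}(\Omega)$, both $Du \otimes Du$ and $\phi(x,Du)$ belong to $W^{1,1}_{loc}$, so integration by parts is licit. Testing \eqref{swMA-2d} against factorized $\phi(x,p) = \rho(x)\psi(p)$ and applying the chain rule on $\psi(Du)$ reduces, modulo boundary terms that vanish against compactly supported $\rho$, to an integrand proportional to $\rho(x)\,\psi(Du)\,\det D^2 u$; the pointwise identity $\det D^2 u = 0$ then yields \eqref{swMA-2d}, and by density so does the full $W^{2,2}_{loc}$ assumption. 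Hence it suffices to prove the second, $W^{2,1}$ part of the theorem.

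The proof of the $W^{2,1}$ version splits into two stages. \textbf{(i)} \emph{$C^1$ regularity.} Neither Sobolev embedding nor elliptic theory gives continuity of $Du$ from $u \in W^{2,1}(\Omega)$ in two dimensions, so the continuity must come entirely from the nonlinear cancellation encoded in \eqref{swMA-2d}. My strategy is to mollify $u$ to $u_\epsilon$, view $Du_\epsilon$ as a curl-free vector field, and use \eqref{swMA-2d} with well-chosen $\psi(Du)$ to control the oscillation of $Du_\epsilon$ on small disks $B_r(x)$. The cancellation says, morally, that $Du$ has ``thin image,'' which a degree or winding-number argument on concentric small circles translates into a quantitative decay of $\operatorname{osc}_{B_r(x)} Du$ as $r \to 0$. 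Passing to the limit in $\epsilon$ yields $Du \in C^0_{loc}$, hence $u \in C^1$. I expect this step to be the main obstacle: turning the algebraic identity \eqref{swMA-2d} into a quantitative modulus of continuity robust enough to survive the borderline $W^{2,1}$ regularity is the core technical difficulty.

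\textbf{(ii)} \emph{Developability.} Once $u \in C^1$, for each $x \in \Omega$ set $L_x = \{y \in \Omega : Du(y) = Du(x)\}$. Using continuity of $Du$, \eqref{swMA-2d}, and the curl-free structure of $Du$, I would show that the connected component of $L_x$ containing $x$ is either open in $\Omega$ (giving a neighborhood on which $Du$ is constant) or a straight line segment through $x$; straightness comes from combining the curl-free identity $\partial_{x_2} u_{x_1} = \partial_{x_1} u_{x_2}$ with the rank-one structure forced by \eqref{swMA-2d}. Two distinct level segments of the foliation cannot meet inside $\Omega$, since $Du$ would otherwise be multi-valued at the intersection, so any non-trivial segment must reach $\partial\Omega$ at both ends, yielding the stated alternative.
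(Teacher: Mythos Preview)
The paper does not itself prove this theorem; it is quoted from \cite{Pak} (the $W^{2,2}_{loc}$ part) and \cite{J2010} (the $W^{2,1}$ part).  The paper's own machinery does reprove the $W^{2,2}_{loc}$ case as the instance $n=2,\ k=1$ of Theorem~\ref{T.degenerateMA}, but in the \emph{opposite order} from yours: a weak, measure-theoretic form of developability is established first, via the integral current $G_{Du}$ carried by the graph of $Du$ (Sections~\ref{s:dcm}--\ref{S:3}), and $C^1$ regularity is then \emph{deduced} from the resulting foliation (Section~\ref{S:4}).  The paper remarks explicitly (just after Theorem~\ref{T.degenerateMA}) that the $C^1$-first route of \cite{Pak} does not extend to the general setting treated here.

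Your Step~(i) is a plan, not a proof, and it has a genuine gap.  Mollification destroys the constraint: neither $\det D^2 u_\epsilon=0$ nor \eqref{swMA-2d} holds for $u_\epsilon$, so you cannot ``use \eqref{swMA-2d} with well-chosen $\psi(Du_\epsilon)$'' to control oscillations of $Du_\epsilon$.  Nor is the winding-number idea substantiated: on a.e.\ circle $\partial B_r$ the trace of $Du$ is continuous (from $W^{2,1}$ and one-dimensional Sobolev embedding), but nothing forces its image to be thin or its degree about a target value to vanish without already knowing the level-set structure you are trying to derive.  The actual argument in \cite{J2010}, and the one the present paper carries out in Sections~\ref{s:dcm}--\ref{S:4}, bypasses oscillation estimates entirely: condition \eqref{swMA-2d} is read as $\partial G_{Du}=0$ in $\Omega\times\R^2$ (cf.\ Lemma~\ref{L.Gw}); slicing together with Solomon's Separation Lemma then shows that horizontal slices of $G_{Du}$ are boundaryless integral $1$-currents with constant tangent direction, hence carried by straight segments; continuity of $Du$ is finally extracted from this foliation via transversal Sobolev estimates (Lemmas~\ref{L.improve1}, \ref{L.lebpt}, \ref{L.no_sing_k}).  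Your Step~(ii) is broadly reasonable once $C^1$ is in hand, though ``curl-free plus rank one implies straightness'' is again only a heuristic; the rigorous versions use either the current structure just described or the constancy of $\ker D^2u$ along level sets.
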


 Theorem \ref{MArigid} was  established for $u\in W^{2,2}_{loc}(\Omega)$ by
the second author in \cite{Pak}, see also Kirchheim \cite{kirchheimthesis}.
The final assertion of the theorem, concerning $W^{2,1}$ functions, 
is in fact a special case of a more general
result  from \cite{J2010},
that applies in the (larger) class of Monge-Amp\`ere functions, introduced by Fu \cite{fu1}
and developed  in \cite{Je1, J2010}.
If one considers not the distributional condition \eqref{swMA-2d} but just the
pointwise Monge-Amp\`ere equation \eqref{MA-2d}, then the $W^{2,2}$ hypothesis
of \cite{Pak} is optimal. Indeed,
conic solutions to \eqref{MA-2d} exist 
if the regularity is assumed to be only $W^{2,p}$ for $p<2$ (see Example \ref{ex2} below).  
One could even construct more sophisticated solutions by gluing these conic singularities in a suitable manner, using Vitali's covering theorem (Example \ref{ex-bes}).
Furthermore, 
Liu and Mal\'y \cite{LM-private} have established the existence of strictly convex $W^{2,p}$ solutions to \eqref{MA-2d} 
(but not to \eqref{wMA-2d})  when $p<2$.  In the meantime, it is known \cite{FM}  that for $p<2$, $W^{2,p}$ solutions to 
\eqref{wMA-2d} exist which are not $C^1$  and fail to satisfy the developability statement of 
Theorem \ref{MArigid} at a given point in the domain.

\medskip 

What interests us in this paper are regularity and rigidity results 
in the manner of Theorems \ref{0deve-regu} and \ref{MArigid} 
for arbitrary $1\le k < n$,
under  Sobolev regularity assumptions. 
We note that the case $k=0$ is trivial, and that there is no rigidity
whenever $k\ge n$, see for example \cite{VWKG}.

The proof  in \cite{LP} of Theorem \ref{0deve-regu} was based on induction on the dimension 
of slices of the domain and careful and detailed geometric arguments, applying 
the $W^{2,2}_{loc}$ case of Theorem \ref{MArigid} to two dimensional slices.   
These methods cannot be adapted to the 
solutions of \eqref{analytic} even for $k=1$, since one loses some natural advantages when working with (\ref{analytic}) rather than with the isometries themselves as
done in  \cite{LP}: the solution $u$  is no more Lipschitz and being just a scalar function, one loses the extra information  derived from the length 
preserving properties of isometries. On the other hand, contrary to the case of  $k=1$, regularity and developability of the Sobolev solutions to 
(\ref{analytic}) does not directly lead to the same results for the corresponding isometries (see \cite{Pak}). 

Hence, the problems of regularity and developability of Sobolev isometric immersions of  co-dimension higher than 1, and also of the developability of Sobolev 
solutions to \eqref{analytic} for  $k>1$, are more involved and could not be tackled 
through the methods discussed in \cite{Pak, LP}.  In this paper, we adapt methods of geometric measure theory, applied  by the first author in 
\cite{Je1, J2010} to the class of Monge-Amp\`ere functions,
to overcome the above obstacles for $k>1$ and tackle both of 
the isometry and rank problems for Sobolev 
regular solutions simultaneously.

\begin{remark}
 It was proved furthermore in \cite{Pak} 
that any $W^{2,2}$ isometry on a convex 2d domain can  be approximated in strong norm by smooth isometries.  The convexity assumption can be weakened 
to e.g. piece-wise $C^1$ regularity of the boundary, see also \cite{Ho0, Ho1, Ho2}.  
A generalization of these results to the co-dimension one case were obtained in \cite{LP}. It could be expected that the results of this paper could 
help in proving similar density statements in higher co-dimensions, but  that would be more technically challenging than the previous cases.
 \end{remark}

\subsection{Main results} 

We first introduce a few fundamental definitions.

\begin{definition} \label{k-planes} 

Let $\Omega \subset \R^n$ be an open set and $j\in \{1, \cdots, n\}$. We say the set $P\subset \Omega$ is a {\em $j$-plane in $\Omega$}
whenever $P$ is the connected component of the intersection of $\Omega$ and a $j$-dimensional affine subspace $\PP$ of $\R^n$.
 We will generally write $P$ to denote a $j$-plane in $\Omega$ for some subset $\Omega\subset \R^n$,
and $\PP$ to denote a complete $j$-plane.
\end{definition}

\begin{definition} \label{k-foliated} 
Let $n\in \N$, $n>1$, $\Omega$ be an open subset of $\R^n$.
We say a mapping $w\in C^0( \Omega, \R^\ell)$ is $(n-k)$-flatly 
foliated whenever  $0\le k <  n$ is an integer and there exists disjoint subsets  
$F_j, j=0,\ldots, k$ of $\Omega$, such that  the following properties
hold:
\begin{itemize}
\item[(i)] $\ds \Omega = \bigcup_{j=0}^k F_j$,
\item[(ii)] For all $j\in \{0, \cdots, k\}$, $\ds \Omega_j := \bigcup_{m=0}^j F_m$ is open,
\item[(iii)]  For all $j\in \{0, \cdots, k\}$ and every $x\in F_j$, there exists at least one $(n-j)$-plane $P$  in $\Omega_j$
such that $x\in P$ and $w$ is constant on $P$.  
\end{itemize} 
We say a mapping is flatly foliated when it is $(n-k)$-foliated for some integer 
$k$.
\end{definition}

\begin{remark}
Note that a straightforward conclusion of the above definition is that $F_j$ is closed in $\Omega_j$ for all $j \in \{0, \cdots, k\}$.
\end{remark}

\begin{definition}\label{defi-develop}
Let $n,N \in \N$, $n>1$, $N\ge 1$, and let $\Omega$ be an open subset of $\R^n$. We say a mapping $y\in C^1(\Omega, \R^N)$ is $(n-k)$-developable whenever 
$Dy: \Omega\to \R^{N\times n} \cong \R^{nN} $ is $(n-k)$-flatly foliated. We say a 
mapping is developable when it is $(n-k)$-developable for an integer $k \in \{0,1, \ldots, n-1\}$.
%$0\le k <  n$ is an integer and there exist disjoint subsets $F_j, j=0,\ldots, k$ of $\Omega$, such that  the following properties
%hold 
%\begin{itemize}
%\item[(i)] $\ds \Omega = \bigcup_{j=0}^k F_j$,
% 
%\item[(ii)] For all $j\in \{0, \cdots, k\}$, $\ds \Omega_j := \bigcup_{\ell=0}^j F_\ell$ is open,
% 
%\item[(iii)]  For all $j\in \{0, \cdots, k\}$ and every $x\in F_j$, there exists at least one $(n-j)$-plane $P$  in $\Omega_j$
%such that $x\in P$ and $Dy$ is constant on $P$.
%\end{itemize} 
\end{definition}  We will later introduce weaker versions of the notions defined in Definitions \ref{k-foliated} and \ref{defi-develop} for 
mappings which are not necessarily of the required regularity. 
% In Section \ref{S:3}, these weaker properties will be shown to be true for mappings  which are not necessarily as regular as those in our main statements  as a step towards our main results. 

The following two theorems  sum up the main contribution 
of this paper. The first theorem concerns Sobolev isometric immersions of 
Euclidean domains and extends Theorem \ref{0deve-regu} to arbitrary codimension.

\begin{theorem}
Let $k\in \{1,\ldots, n-1\}$. Assume that $\Omega$ is a bounded, open subset of $\R^n$, and that 
$U\in W^{2,p}_{loc}(\Omega;\R^{n+k})$ is  an isometric immersion, so that
$U$ satisfies
\[
U_{x^i}\cdot U_{x^j}  = \delta_{ij} \quad \mbox{a.e. in}  \,\, \Omega, \quad \forall i,j\in \{1,\ldots, n\}.
\]  If $p\ge\min \{ 2k, n\}$ then $U\in C^{1}(\Omega;\R^{n+k})$,
and $U$ is $(n-k)$-developable.
\label{T.isom_immersion}\end{theorem}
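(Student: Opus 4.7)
The plan is to reduce the isometric immersion problem to the scalar rank theorem (the main technical result of the paper, which handles scalar $W^{2,p}$ functions with $\operatorname{rank}(D^2 u)\le k$ a.e.\ under the same threshold $p\ge \min\{2k,n\}$), and then to glue the resulting componentwise foliations into a single foliation of $DU$.

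I would begin by invoking Proposition \ref{isom.-is-degenerate}: the classical Gauss equation applied to an isometric immersion of a flat Euclidean domain into $\R^{n+k}$ (in the spirit of the Chern--Kuiper relative nullity bound) forces the kernel of the vector-valued second fundamental form to have dimension at least $n-k$ pointwise a.e.\ in $\Omega$. Because the common kernel $K(x):=\bigcap_{m=1}^{n+k}\ker D^2 U^m(x)$ coincides with this relative nullity subspace, each component $U^m$ automatically satisfies the scalar Hessian rank inequality $\operatorname{rank}(D^2 U^m)\le k$ almost everywhere. I would also record that $|DU|^2=n$, so $U$ is automatically Lipschitz; this simplifies several of the continuity arguments that follow.

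Next I would apply the scalar rank theorem (established earlier in the paper under exactly the hypothesis $p\ge \min\{2k,n\}$) to each component $U^m\in W^{2,p}_{\mathrm{loc}}(\Omega)$. This yields $U^m\in C^1(\Omega)$ together with the fact that $DU^m$ is $(n-k)$-flatly foliated in the sense of Definition \ref{k-foliated}. Componentwise $C^1$ regularity gives $U\in C^1(\Omega;\R^{n+k})$, which settles the regularity half of the theorem.

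The remaining task is to upgrade the componentwise foliations of the $DU^m$ to a single $(n-k)$-flat foliation of $DU$. My candidate decomposition is $\tilde F_j:=\{x\in\Omega:\dim K(x)=n-j\}$ for $j=0,1,\ldots,k$, so that $\Omega=\bigcup_{j=0}^{k}\tilde F_j$ by the relative nullity bound. Along any direction in $K(x)$ the full map $DU$ is infinitesimally constant, and for each $m$ the $(n-j_m)$-plane of constancy of $DU^m$ furnished by the scalar theorem must contain $K(x)$. The main obstacle is to promote these pointwise/infinitesimal facts to a genuine flat foliation of $DU$: one must show that $\tilde \Omega_j:=\bigcup_{i\le j}\tilde F_i$ is open, that through every $x\in\tilde F_j$ there is an honest affine $(n-j)$-plane in $\tilde \Omega_j$ on which $DU$ (not merely each $DU^m$ separately) is constant, and that the componentwise foliations synchronize along this common structure. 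This synchronization is where the geometric-measure-theoretic machinery inherited from \cite{Je1,J2010} is indispensable, since the planes of constancy coming out of the scalar theorem are a priori chosen only pointwise, and one must propagate the a.e.\ relative-nullity information to all of $\Omega$ by combining the already-established continuity of $DU$ with the rigidity of the flat fibers.
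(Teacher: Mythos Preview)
Your proposal diverges from the paper's proof in a way that creates a real gap. The paper does \emph{not} reduce Theorem~\ref{T.isom_immersion} to the scalar result (Theorem~\ref{T.degenerateMA}) applied componentwise; instead it applies Propositions~\ref{P.dwff}, \ref{P.wd} and \ref{P.sd} directly to the single vector-valued map $w=DU:\Omega\to\R^{n(n+k)}$, after Proposition~\ref{isom.-is-degenerate} has established $\operatorname{rank}(Dw)\le k$ a.e. Those propositions are formulated throughout for vector-valued $w$ possessing a gradient structure $w=(Du^1,\ldots,Du^q)$ precisely so that the isometry case and the scalar case are handled by one and the same argument, with no subsequent gluing required.

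Your route---componentwise application of the scalar theorem followed by ``synchronization'' of the resulting foliations---leaves the synchronization step essentially unproved. Knowing that each $DU^m$ is separately $(n-k)$-flatly foliated does not furnish, for a given point $x$, a \emph{common} $(n-j)$-plane on which all of $DU^1,\ldots,DU^{n+k}$ are simultaneously constant; the decompositions $(F_j^m)_m$ and the planes they produce may differ from component to component. Your candidate stratification $\tilde F_j=\{x:\dim K(x)=n-j\}$ is only a.e.\ defined (since $D^2U$ is), and you give no argument that $\tilde\Omega_j$ is open or that the plane tangent to $K(x)$ is actually a plane of constancy for the full $DU$. The paper itself flags this obstruction in the introduction: ``contrary to the case of $k=1$, regularity and developability of the Sobolev solutions to \eqref{analytic} does not directly lead to the same results for the corresponding isometries.'' In short, the heavy lifting you defer to ``the geometric-measure-theoretic machinery'' is exactly the content of Propositions~\ref{P.dwff}--\ref{P.sd} applied to the vector-valued $w=DU$, and invoking them at that point amounts to the paper's proof, rendering the componentwise detour superfluous.
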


The next theorem is a similar statement concerning scalar functions and 
generalizes  to arbitrary $n$ and $k$  those parts of Theorem  \ref{MArigid}
that concern the (pointwise) degnerate Monge-Amp\`ere equation \eqref{MA-2d} .
This result is new whenever $n>2$, even for $k=1$. 

\begin{theorem}
Assume that $\Omega$ is a bounded, open subset of $\R^n$
and that $u:\Omega\to \R$ satisfies
\begin{equation}
u\in W^{2,p}_{loc}(\Omega)\mbox{ with } p \ge \min \{2k, n\} \quad\quad \mbox{rank}(D^2u)\le k \  a.\,e
\label{w2pstar}\end{equation}
for some $k\in \{1,\ldots, n-1\}$. Then $u\in C^1(\Omega)$,
and  $u$ is $(n-k)$-developable.
\label{T.degenerateMA}\end{theorem}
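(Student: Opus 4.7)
The plan is to transport to the Sobolev setting the geometric measure theoretic framework developed for Monge-Amp\`ere functions in \cite{Je1, J2010}. I would proceed in three main phases.

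\textbf{Phase 1 (Correct integrability from $p\ge\min\{2k,n\}$).} The assumption $u\in W^{2,p}_{loc}$ makes $Du$ a Sobolev map $\Omega\to\R^n$, while the pointwise rank condition forces every $(k+1)\times(k+1)$ minor of $D^2u$ to vanish a.e. The role of the threshold $p\ge\min\{2k,n\}$ is to make the $k$-dimensional Jacobian $J_k(D^2u)$, a $k$-homogeneous polynomial in the entries of $D^2u$, locally integrable: for $p\ge 2k$ this follows from H\"older's inequality, and for $p\ge n$ from Sobolev embedding together with the standard area formula for Sobolev maps. In either case one concludes that $\mathcal{H}^k(Du(K))<\infty$ for every compact $K\subset\Omega$ and that the image $Du(\Omega)$ is essentially $k$-dimensional.

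\textbf{Phase 2 (Slicing and planar fibers).} Attach to $Du$ its integer-multiplicity graph current $T_u$ in $\Omega\times\R^n$, and apply slicing to decompose $T_u$ as an integral of $(n-k)$-dimensional slices $T_y$ indexed by $y$ in the image of $Du$. The rank condition forces the tangent $n$-plane to $T_u$ to split a.e.\ as an $(n-k)$-plane in the first factor combined with a $k$-plane in the second. A pointwise rigidity argument — an arbitrary-dimension extension of the Kirchheim-type computation underlying \cite{Pak, J2010} — then identifies each slice $T_y$ (for $\mathcal{H}^k$-a.e.\ $y$) with a locally finite union of genuine $(n-k)$-planes on which $Du\equiv y$. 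This produces a foliation of $\Omega$ by $(n-k)$-planes of constancy of $Du$, valid up to a Lebesgue-null set.

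\textbf{Phase 3 (From a.e. to everywhere; stratification; $C^1$ regularity).} Upgrade the a.e. foliation to the hierarchical decomposition of Definition \ref{k-foliated} by a pointwise compactness argument. Given $x\in\Omega$, choose planes of constancy through points $x_m\to x$ supplied by Phase 2; by compactness of the affine Grassmannian, a subsequence converges to an affine subspace through $x$ on which $Du$ is still constant — either of full dimension $n-k$ (placing $x\in F_k$) or of strictly lower dimension (placing $x$ in an earlier stratum $F_j$, with an $n$-dimensional limit corresponding to $Du$ locally constant near $x$, i.e.\ $x\in F_0$). Continuity of $Du$ at $x$ follows because $Du$ is locally constant on leaves of a continuously varying foliation, yielding $u\in C^1(\Omega)$.

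The main obstacle is the pointwise rigidity step in Phase 2. In codimension one \cite{Pak, LP} one could restrict to $2$-dimensional slices and invoke Theorem \ref{MArigid}, but as the authors emphasize this reduction is unavailable for $k>1$: the scalar $u$ is not an isometry, and slicing does not disentangle the higher-rank condition. The heart of the proof must therefore be a direct current-theoretic argument, tuned precisely to the critical integrability exponent $p=\min\{2k,n\}$; any weaker hypothesis would admit conic-type obstructions of the kind hinted at in Example \ref{ex2}.
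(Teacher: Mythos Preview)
Your broad architecture---graph current, slicing, planar fibers, then upgrade to everywhere---matches the paper's. But you have misplaced the role of the hypothesis $p\ge\min\{2k,n\}$, and this leads to a real gap in Phase~3.

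\textbf{Where $p\ge\min\{2k,n\}$ is actually used.} It is \emph{not} needed for the integrability in Phase~1. The graph current $G_w$ of $w=Du$ and its boundary-free property require only that all minors of $Dw$ of order $\le k+1$ be locally integrable, which holds already for $p\ge k+1$ (and the minors of order $k+1$ vanish a.e.\ anyway). The entire geometric-measure-theoretic machinery---rectifiability of $\Gamma_v$, slicing, planarity of the fibers $\Gamma_h(\xi)$---goes through under $p\ge k+1$. Your H\"older/Sobolev argument for $J_k$ is both unnecessary and not the bottleneck; indeed, a $k$-homogeneous polynomial in $D^2u$ is integrable as soon as $p\ge k$.

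\textbf{The actual gap in Phase~3.} The condition $p\ge\min\{2k,n\}$ enters only at the very end, in passing from ``$Du$ is a.e.\ constant on an $(n-k)$-plane through each point'' to continuity of $Du$. The obstruction is this: through a single point $x_0$ there may pass two distinct $(n-k)$-planes $P_1,P_2$ on which $w=Du$ equals two \emph{different} constants $\xi_1\ne\xi_2$ (and Example~\ref{ex:4} shows this configuration can genuinely arise in limits). One must rule this out. The paper does so by an oscillation estimate: $P_1\cap P_2$ has dimension $\ge n-2k$, so on small $(2k)$-spheres (or $n$-spheres if $2k\ge n$) transverse to the intersection the essential oscillation of $w$ is $\ge|\xi_1-\xi_2|$, forcing $\int|Dw|^{\min\{2k,n\}}=\infty$ unless $\xi_1=\xi_2$. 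This is exactly the place where the exponent $\min\{2k,n\}$ is sharp (Examples~\ref{ex:Hopf1}--\ref{ex:Hopf3}). Your Phase~3 compactness argument---take limits of planes through $x_m\to x$---produces a limit plane, but does not explain why two such limits must agree in value, nor why the value matches $Du(x)$; the sentence ``continuity follows because $Du$ is locally constant on leaves of a continuously varying foliation'' assumes precisely what has to be proved.

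\textbf{Stratification.} Your proposed stratification by the dimension of the limit plane is not the paper's and is not obviously consistent. The paper instead defines $\Omega^j=\{x:\mathrm{rank}(D^2u(x))=j\}$ inside the open set where $\mathrm{rank}\le j$ a.e., sets $F_j=\bar\Omega^j\cap\Omega_j$, and iterates downward; this guarantees the openness of $\Omega_j$ and the density needed to re-run the slicing argument at each level.
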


\begin{remark} 
One interesting feature of these results is that the Sobolev regularity $W^{2,p}$ can be much below the
required $W^{2,n+\varepsilon}$ for obtaining $C^1$ regularity by Sobolev embedding theorems.  The  
argument used in  \cite[Lemma 2.1]{Pak} to show 
the continuity of the derivatives of the given Sobolev isometry is 
no more generalizable to our case. In \cite{Pak}, the $C^1$ regularity is shown as a  first step towards  the proof of developability. 
Here, on the other hand, we first show a weaker version of developability for the mapping  and use it to show the $C^1$ regularity. 
\end{remark}

\begin{remark}
In Example \ref{ex2} below, we show that if $u\in W^{2,p}(\Omega)$
satisfies $\mbox{rank}(D^2u)\le k \  a.\,e$, and if $p<k+1$, then $u$ may fail
to be $C^1$. These examples in particular imply that the condition $p\ge\min\{ 2k, n\}$
in Theorem \ref{T.degenerateMA}
cannot be weakened if $k=1$ or $k=n-1$. We believe however that it can be
weakened if $k\in\{2,\ldots, n-2\}$. Indeed, it seems likely that the conclusions of the theorem continue to hold  under the assumption that 
\beq
u\in W^{2,p}_{loc}(\Omega)\mbox{ with } p \ge k+1 \quad\quad \mbox{rank}(D^2u)\le k \  a.\,e.
\label{conjjjj}\eeq
%but we do not have a proof.
\end{remark}

\subsection{Some examples}

%\begin{example}
%Given any $v\in W^{2,p}(\R^k)$, the function $u(x_1,\ldots, x_n) = v(x_1,\ldots, x_k)$ satisfies the hypotheses of 
%Theorem \ref{T.degenerateMA}. It easily follows that the conclusion $u\in C^{1,\alpha}_{loc}$, $\alpha = 1-\frac kp$ cannot be improved.

%This example  illuminates the content of the theorem.  Indeed, the condition $\mbox{rank}(D^2u)\le k \  a.\,e$ asserts, 
%at least heuristically, that infinitesimally $Du$ depends nontrivially on at most  $k<n$ variables. Under this hypothesis, 
%and with some additional integrability for $Du$, the first conclusion of  Theorem \ref{T.degenerateMA} is, roughly speaking, 
%that $Du$ has the regularity of a function in $W^{1,p}_{loc}(\R^k)$, which since $k<  p\le n$ is substantially better than $W^{1,p}_{loc}(\R^n)$. 
%\label{ex1}\end{example}

\begin{example}
For any 
$k<n$ and $1\le p<k+1$, there exists $u\in W^{2,p}_{loc}(\R^n)$,
and  
rank$\,(D^2 u) \le k$ a.e.,  but such that the conclusions of the theorem
fail. Indeed, consider $u$ of the form
\[
u(x^1,\ldots, x^n) = u_0(x^1,\ldots, x^{k+1})
\quad\mbox{ for $u_0\in C^2_{loc}(\R^{k+1}\setminus \{0\})$ 
homogeneous
of degree $1$.}
\]
One easily checks that $u\in \cup_{p<k+1}W^{2,p}_{loc}(\R^n)$, and it is clear that
$Du$ is not continuous on the set $\{ x\in \R^n : x^1 = \ldots, x^{k+1} = 0\}$, unless it is constant.
%By choosing $v$ slightly more carefully one can also  arrange that $\operatorname{Det} D^2 u = 0$ in $\R^n$ in the sense of distributions. Give more details or reference?

%We do not have even any heuristic argument that would suggest  for which values of $p$ a $W^{2,p}$ isometric immersions $\Omega\subset \R^n\to \R^{n+k}$ is necessarily $C^1$.

\label{ex2}\end{example}

One could generalize the above example by gluing conic singularities in the following manner:  

\begin{example}
 
By Vitali's covering theorem, 
we choose a covering  ${\mathcal B}:= \{B(a_i, r_i)\}_{i\in \mathbb N}$ of $\R^{k+1}$ of non-overlapping  balls
so that $\R^{k+1} \setminus \bigcup_{i\in \mathbb N} B(a_i, r_i)$ is of Lebesgue measure zero.  We define  $v_0: \R^{k+1} \to \R^{k+1}$   by 
\begin{equation*} 
\ds v_0(x):=\left  \{ \begin{array}{ll} a_i+ r_i (x-a_i)/|x-a_i| & \mbox{if} \,\, x\in B(a_i, r_i), \\ x & \mbox{otherwise.}   \end{array} \right. 
\end{equation*}   It can be easily verified that $v_0 \in W^{1,p}_{loc}(\R^{k+1})$  for all $1 \le p<k+1$ and that $v_0=Du_0$ for a scalar function. 
Let $u(x^1, \ldots, x^n) := u_0(x^1, \ldots, x^{k+1})$. Then $u\in W^{2,p}_{loc}(\R^n)$  for $1 \le p < k+1$,  
rank$\,(D^2 u) \le k$, but $Du$ is not continuous on the set $\{a_i\}_{i\in \mathbb N} \times \R^{n-k-1}$. 

\label{ex-bes}\end{example}  

One might naively hope that for every $k<n$, the set $\{x\in \Omega : \mbox{rank}(D^2 u) = k\}$
is foliated by $n-k$-planes on which $Du$ is constant. This is not at all the case.
 
\begin{example}
Consider $u:(0,1)^2\to \R$ of the form $u(x,y) = F(x)$ where $F' = f:(0,1)\to \R$ is a {\em strictly increasing} Lipschitz continuous function such that $\{x\in (0,1): f'(x)=0\}$ has positive measure.
For example, fix an open dense set
 $O\subset (0,1)$ whose complement has positive measure,
and let 
$f(x) := \calL^1( (0,x) \cap O)$,
so that $f$ is Lipschitz continuous and
\[
f'(x) =\begin{cases}1 &\mbox{ for }a.e. \ x\in O\\
0&\mbox{ for }a.e. \ x\not\in O.
\end{cases}
\]

For a function of this form, we have $u\in W^{2,\infty}$, with
\[
Du(x,y) = (f(x), 0), \quad \quad D^2 u(x,y) = \left(\begin{array}{cc}
f'(x) &0\\0&0\end{array}\right)\ \  a.e. 
\]
so that  rank$(D^2u) \le 1$ a.e., and rank$(D^2 u) = 0$ on a dense set of positive measure.
However, 
there is no $2$-dimensional set on which $Du$ is locally constant; rather, for every $\xi\in \mbox{Im}(Du)$,
where $\mbox{Im}(\cdot)$ denotes the image,
$Du^{-1}\{ \xi \}$ is the line segment  $f^{-1}\{ \xi\} \times (0,1)$.
\label{ex3}\end{example}

\begin{example}
Consider again $u:(0,1)^2\to \R$ of the form $u(x,y) = F(x)$, where $F'  = f$
and
$f(x) := \calL^1( (0,x)\setminus O)$, where $O$ is as in Example \ref{ex3} above.
Then $f$ is Lipschitz continuous and
\[
f'(x) =\begin{cases}0 &\mbox{ for }a.e. \ x\in O\\
1&\mbox{ for }a.e. \ x\not\in O.
\end{cases}
\]

Then in the notation of Definition \ref{k-foliated} below,
$\Omega = \Omega_1$, and $\Omega_0 = O \times (0,1)$.
Thus $\Omega_0$ is a dense subset of $\Omega_1$, and $F_1 =
\Omega_1\setminus \Omega_0$ is nowhere dense in $\Omega_1$.

More generally, given $0 \le j < k \le n$, one can write down  examples in
the same spirit defined on the unit cube in $\R^n$, such that 
$\Omega_j$ is dense in $\Omega_k$.
\label{ex.dense}\end{example}

\begin{example}\label{ex:4}
Fix a $C^2$ map $\vec v: \R\to \R^2$ such that $\vec v(0) = 0$, $v'(z)\ne 0$ for $z\ne 0$, 
and $\lim_{z\to 0}\frac{\vec v'}{|\vec v'|}$ does not exist.
For example, we may take $\vec v(z) = (z^5 \cos(1/z), z^5\sin (1/z))$.

Now set $\Omega = (-1,1)^3$, and let $u(x,y,z) = (x,y)\cdot \vec v(z)$.
Then we can write $Du(x,y,z) = (\vec v(z), (x,y)\cdot \vec  v\mbox{\,}'(z))$.
Thus level sets of $Du$ are the plane $z=0$, together with the line segments 
\[
\{ x,y,z) : z = z_0,
(x,y)\cdot  \vec v\mbox{\,}'(z_0) = c \}, \qquad z_0\ne 0, c\in \R.
\]
It is also easy to check
that $u$ is $C^2$,  rank$(D^2 u) = 2$ if $z\ne 0$ and  rank$(D^2 u) = 0$ if $z=0$.

(Note also,  $\tilde u := u + z^2$ has all the same properties as $u$ described above, except that
rank$(D^2 u)=1$ when $z=0$.)

This example show that (in notation to be introduced later)
$\bar \Omega^k$ may contain planes of dimension greater than $n-k$
on which $Du$ is a.e. constant. By contrast, the previous example shows that it may
also happen that $\bar \Omega^k\setminus \Omega^k$ is foliated
by planes of dimension $n-k$. 

Also, we can see from this example that  
the $(n-k)$-planes that locally foliate $\Omega^k$ may oscillate wildly
as one approaches points in $\bar \Omega^k$ at which
rank$(D^2 u) < k$.
% Indeed, in the example, any line segment in the plane $z=0$ can be achieved as a limits of lines of constancy of $Du$ for 

\end{example}

\medskip 

\subsection{Remarks on notation, and an outline of proofs}
\label{background}

Throughout the paper, we will often simply write ``measurable", ``almost everywhere", without specifying the Hausdorff measure at use, when 
the latter is clear from the context.  Many of our arguments take place in a product space $\Omega\times \R^\ell$,
where $\Omega\subset \R^n$ and $\ell$ is a positive integer. % -- often $n$, and sometimes $n(n+k)$.
In this setting we will think of $\Omega$ and $\R^\ell$ as ``horizontal" and ``vertical",
respectively, and we will use subscripts $h$ and $v$  accordingly.
For example, we will write $p_h, p_v$ to designate projections
of $\Omega\times \R^\ell$ onto the horizontal and vertical factors, respectively:
\begin{equation}
p_h(x,\xi) := x,
\qquad \qquad
\quad p_v(x,\xi) := \xi.
\label{phv.def}\end{equation}
\
%Many a time, we will write $\R^n$ as a product of $\R^j \times\R^{n-j}$ in specific coordinates.

If $w\in L^p(\Omega)$ for some $p<\infty$, then a Lebesgue point of $w$ 
will mean a point $x$
such that
\begin{equation}\label{lebpt}
\lim_{r\to 0} \barint_{B_r(x)}|w(y) - w(x)|^p \ dy \ := \ 
\lim_{r\to 0} \frac 1{\calL^n(B_r(x))} \int_{B_r(x)}|w(y) - w(x)|^p \ dy  \ = \ 0.
\end{equation}
Thus, we always understand ``Lebesgue point" in an $L^p$ sense.
We  assume that every function $w$ appearing in this paper is
precisely represented. Thus $w$ always equals its Lebesgue value
at every point where the Lebesgue value exists. If $u\in W^{2,p}(\Omega)$, 
there is a  set $E$ such that $\mbox{Cap}_p(E) = 0$ 
and every point of $\Omega\setminus E$
is a Lebesgue point of $Du$. %(Ziemer Theorem  3.3.3). 
The capacity estimate implies that $\calH^{n-p+\e}(E)=0$ for every $\e>0$.
%(Ziemer Theorem 2.6.16). 
These facts can be found for example in
Ziemer \cite{Ziemer}, Theorem 3.3.3 and 2.6.16 respectively, or in \cite{eg}.

%\begin{example}discuss Maly-Martio example.
%\label{maly-martio}\end{example}

%\section{almost everywhere weak developability}\label{S:2}
\medskip

To describe the proof, it is useful to introduce several weaker versions of the the notions
of flatly foliated, defined above.

\begin{definition} \label{dWk-foliated} 
Let $n\in \N$, $n>1$, $\Omega$ be an open subset of $\R^n$.
We say a measurable mapping $w : \Omega \to \R^\ell$ is densely weakly $(n-k)$-flatly 
foliated whenever  there exist some $k\in \{0, 1,\ldots, n-1\}$ and 
disjoint subsets $F_j, j=0,\ldots, k$ of $\Omega$, such that
\begin{equation}
\Omega = \bigcup_{j=0}^k F_j,
\label{Fj1.2}\end{equation}
and in addition,   the following properties
hold for every $j$:
\begin{equation}
\quad\quad
\Omega_j := \bigcup_{m=0}^j F_m \mbox{ is open,}%, and  hence $F_j$ is closed in $\Omega_j$}
\label{Fj2.2}\end{equation} and 
\begin{multline} 
\mbox{ for every $x$ in some dense subset of $F_j$, there exists at least one $n-j$-plane $P$  in $\Omega_j$} \\
\mbox{such that $x\in P$ and 
$w$ is $\calH^{n-j}$ a.e. constant on $P$.}
\label{Fj3.2}\end{multline}
\end{definition}

\begin{definition}\label{Wk-foliated}
Let $n\in \N$, $n>1$, $\Omega$ be an open subset of $\R^n$.
We say a measurable mapping $w : \Omega \to \R^\ell$ is pointwise weakly $(n-k)$-flatly 
foliated whenever  there exist some $k\in \{0, 1,\ldots, n-1\}$ and 
disjoint subsets $F_j, j=0,\ldots, k$ of $\Omega$, such that 
\eqref{Fj1.2} and \eqref{Fj2.2} hold, and 
\begin{multline} 
\mbox{ for every $x\in F_j$, there exists at least one $n-j$-plane $P$  in $\Omega_j$} \\
\mbox{such that $x\in P$ and 
$w$ is $\calH^{n-j}$ a.e. constant on $P$.}
\label{Fj3.2a}\end{multline}
\end{definition}

\begin{remark} 
The definitions require that the values of $w$ are well defined for $\calH^{n-j}$ a.e. points on the given $n-j$-planes in $\Omega$. 
As noted above, this is the case if we assume that e.g. $w\in W^{1,k+1}_{loc}(\Omega, \R^\ell)$
and $w$ is precisely represented, since in that 
case the set of points that fail to be Lebesgue points of $w$ has dimension less than $n-k$. 
\end{remark}

We start in Section \ref{S:6} by showing that if $U\in W^{2,2}(\Omega;\R^{n+k})$
is an isometric immersion for $\Omega\subset\R^n$, then $w = DU$ satsfies
\[
\mbox{rank}(Dw) \le k \ \ a.e. \mbox{ in }\Omega.
\]
This is a classical fact for smooth maps. As a consequence, both of our main results
reduce to the study of maps 
$w:\Omega\to \R^\ell$ for some $\ell$, such that 
\begin{equation}
\mbox{rank}(Dw(x)) \le k \ \ a.e. \mbox{ in }\Omega, 
\qquad
w = (Du^1,\ldots, Du^q)\mbox{ for some }q\ge 1.
\label{unified}\end{equation}

A main challenge we must address is to find a way
to extract information from the hypotheses \eqref{unified}
under conditions of low regularity.
We carry this out making extensive use of the machinery of geometric measure theory, including in particular some results from Giaquinta, Modica and Sou\v cek \cite{gms},
Fu \cite{fu1} and the first author  \cite{J2010}
about the related topics of Cartesian maps and Monge-Amp\`ere functions.

To explain the role of geometric measure theory, we first outline the basic argument on a formal level.
Toward that end, consider a {\em smooth} map $w = (Du^1,\ldots, Du^q)$ such 
that rank$(Dw)= k$ everywhere, and further suppose that
\begin{itemize}
\item Image$(w)$ is a smooth embedded $k$-dimensional submanifold $\Gamma_v\subset \R^n$,
where  Im$(w)$ denotes the image of $w$, and  
\item For every $\xi\in \Gamma_v$, $\Gamma_h(\xi) := w^{-1}\{\xi\}$ is a smooth $(n-k)$-dimensional
submanifold of $\Omega$.
\end{itemize}
These assumptions are far stronger than one can reasonably expect, but in any case they are certainy consistent with
the condition that rank$(Dw) = k$.
For every $\xi\in \Gamma_v$, and for every $x\in \Gamma_h(\xi)$, basic calculus implies that
\begin{equation}
\mbox{Im}(Dw(x)) = T_\xi \Gamma_v
\label{TxiGv}\end{equation}
and
\begin{equation}
\ker(Dw(x)) = T_x\Gamma_h(\xi).
\label{TxGh}\end{equation}
Moreover, the symmetry of $D^2 u^i(x)$
implies that $\ker(D^2 u^i(x)) = [ \mbox{Im}(D^2 u^i(x))]^\perp$, if we identify,
in the natural
way,
the horizontal and vertical spaces to which $T_\xi\Gamma_v$ and $T_x\Gamma_h(\xi)$ belong.
Thus
\begin{align*}
T_x\Gamma_h(\xi) = \ker(D w(x)) = \cap_{i=1}^{q} \ker(D^2 u^i(x)) 
= 
\cap_{i=1}^{q}  [ \mbox{Im}(D^2 u^i(x))]^\perp.
\end{align*}
The space on the right is completely determined by $T_\xi \Gamma_v$ --- in
fact it can be written $\cap_{i=1}^{q}    [ P_i T_\xi \Gamma_v]^\perp$,
where $P_i$ denotes orthonormal projection of $\R^{n q} = (\R^n)^q$ onto 
the $i$th copy of $\R^n$.
Thus  the tangent space $T_x\Gamma_h(\xi) $ does not depend 
at all on $x\in \Gamma_h(\xi)$, but only on $\xi$.
Since the tangent space is constant, $\Gamma_h(\xi)$ must be a union of $n-k$-planes
in $\Omega$, all orthogonal to $\cap_{i=1}^{j}    [ P_i T_\xi \Gamma_v]^\perp$.

The rigorous version of this argument starts in  Section \ref{s:dcm}, where we use the machinery of 
geometric measure theory to establish facts about
\begin{itemize}
\item
 the structure of $\Gamma_v$ and $\Gamma_h(\xi)$, which in our actual
 proof will be, not exactly the image and the level sets of $w$,
 but closely related sets;
%will be proxies for the image of $w$ and a typical level set of $w$, respectively;
and 
\item the relationship between their tangent spaces and the derivatives of $w$,
along the lines of \eqref{TxiGv} and \eqref{TxGh} above
\end{itemize}
that are (barely)
strong enough to justify some form of the proof sketched above.
These arguments apply to general mappings  (without a gradient structure)
$w\in W^{1,k+1}(\Omega ; \R^\ell)$
such that rank$(Dw)\le k$ a.e.
Under these assumptions, we obtain $\Gamma_v$ and $\Gamma_h(\xi)$
as, essentially, the vertical projection and horizontal slices, respectively,
of a set
\[
\Gamma := \{ (x, w(x))\in \Omega\times \R^\ell : x\mbox{ is a Lebesgue point of both $w$ and $Dw$}\}.
\]
(See \eqref{Gammav.def}, \eqref{Gammah.def} for the actual definitions.)
Appealing to results of Giaquinta, Modica and Sou\v cek \cite{gms}, we find
that  that $\Gamma$ is $n$-rectifiable, and that an integral $n$-current $G_w$,
canonically associated to the graph of $w$ and carried by $\Gamma$,
has no boundary in $\Omega\times \R^\ell$. Then the rectifiability of $\Gamma_v$ and
of $\calH^k$ almost every $\Gamma_h(\xi)$ follows from  classical results and
the definitions of these sets, as does
a version of \eqref{TxiGv}. Additional work is required to establish a version of \eqref{TxGh}
and to show that the slices $\Gamma_h(\xi)$ have enough regularity (in particular,
they carry integer $n-k$-currents with no boundary) to conclude
from the constancy of the tangent spaces that they are in fact planar.

In Secion \ref{fullrank}, we use these facts to prove that if $w\in W^{1,k+1}_{loc}$
satisfies \eqref{unified}, then $w$ is densely weakly $(n-k)$ flatly foliated.
More precisely, we define
\begin{equation*}
\Omega^k := \{ x\in \Omega : \mbox{$x$ is a Lebesgue point of $w$ and $Dw$, and
$\mbox{rank}\,(D w) = k $} \},
\end{equation*} 
and we give a rigorous version of the formal argument sketched above to
show, roughly speaking, that $\Omega^k$ is almost everywhere foliated by level sets of $w$
that are $n-k$-planes in $\Omega$. (We remark that this is the {\em only}
place in the paper where we use the gradient structure of $w$.)
To deduce that $w$ is  densely weakly $(n-k)$-flatly foliated, we define $F_k := \bar \Omega^k$
and  $\Omega_{k-1}:= \Omega \setminus F_k$, and we note that 
rank($D^2 u$)$\le k-1$ a.e. in $\Omega_{k-1}$. Hence the above machinery could be re-applied to the new set with the new rank condition. 
More generally, letting $\Omega_k = \Omega$, and for $j\in \{k, \ldots, 0\}$, defining 
(working downwards) 
\begin{align*}
\Omega^{j} 
&:=  \{ x\in  \Omega_{j} :
 \mbox{$x$ is a Lebesgue point of $Du$ and $D^2u$, and
$\mbox{rank}\,(D^2 u) = j $} \}, \\  
F_j & := \bar \Omega^j \cap \Omega_j \, ,   \\
\Omega_{j-1}  & := \Omega_j - F_{j}  = \Omega_j - \bar \Omega^{j} , 
\end{align*}
we  obtain a partition of $\Omega$ into disjoint sets $F_j$, $j=0,1,\ldots, k$
such that every $F_j$, has a dense subset
foliated by $n-j$-planes on which $w$ is $\calH^{n-j}$ a.e. constant.
%We repeatedly use the fact, often without explicitly mentioning it,  that if $u$ satisfies  \eqref{conjjjj} in $\Omega$, then its restriction to $\Omega_j$ satisfies \eqref{conjjjj} in the open set  $\Omega_j$, with $k$ replaced by $j$. So results about $u$ in $\Omega^k\subset \Omega_k = \Omega$, for example, will immediately imply results about $u$ in $\Omega^j\subset \Omega_j$, for $j<k$.  

\medskip

Following this, we prove in Section \ref{S:3} that
if $w\in W^{1, k+1}_{loc}(\Omega; \R^\ell)$ 
is  {\em densely} weakly  $(n-k)$-flatly foliated, then $w$ is
{\em pointwise} weakly  $(n-k)$-flatly foliated. (In fact here
we only need $W^{1,p}_{loc}$ for some $p>k$.) 
The hypothesis already yields a partition of $\Omega$
into sets $F_j$ satisfying properites  \eqref{Fj1.2}, \eqref{Fj2.2},
and so the point is to show that \eqref{Fj3.2} together with
the assumed Sobolev regularity implies \eqref{Fj3.2a}.
To do this, we obtain a planar level set of $w$ through a 
given point as a limit of planar level sets through nearby points.
We remark that it is possible, as illustrated in Example \ref{ex3}, for $F_k$
to contain a subset of
$\Omega\setminus\Omega^k$ of positive
measure to be foliated by $n-k$-planes on which $w$ is constant.

The arguments of Sections \ref{s:dcm}, \ref{fullrank} and \ref{S:3} require only the weaker
regularity assumption \eqref{conjjjj}, and this 
hypothesis is sharp in a sense; this follows from Example
\ref{ex2} below.  The stronger assumption \eqref{w2pstar}
is needed for Section \ref{S:4}, in which 
prove  that if $p=\min\{2k,n\}$ and
$w\in W^{1, p}_{loc}(\Omega; \R^\ell)$ 
is  {\em pointwise} weakly  $(n-k)$-flatly foliated, then $w$ is
continuous, and hence $(n-k)$-flatly foliated. 
This will complete the proof of 
our main results.
For the proof, we first show show that if a point $x\in F_k$
is contained in two distinct $n-k$-planes in $\Omega$ on which $w$ is 
{\em a.e.}
constant, then the two constants are in fact
equal. (Example \ref{ex:4}  shows that this situation can in fact arise.)
It follows rather easily from this that the restriction of $w$ to $F_k$
is $C^0$, and indeed that the same holds in $F_j$
for all $j\le k$. To conclude that $w$ is continuous in $\Omega$,
it remains to show that it is continuous at points of
$\partial \Omega_j\cap \Omega$. This is a little more subtle,
and is proved by showing that any such discontinuity is
inconsistent with the $p$-quasicontinuity of $w$, given
facts we have already established about $w$.

The condition $p\ge \{2k,n\}$ is sharp for the results of Section
\ref{S:4}, at least for certain values of $k$,
including $k=2,4,8$. This follows from Examples \ref{ex:Hopf1} - 
\ref{ex:Hopf3} in Section \ref{S:4}.
These results however apply to vector-valued  maps  $w:\Omega\to \R^\ell$
that are 
pointwise a.e. flatly foliated. As suggested above, we believe
that if one considers maps that in addition possess a gradient
structure, that is, maps of the form $w = (Du^1,\ldots, Du^q)$ for
some $q$, then it should be possible to 
weaken the regularity requirements.

 \bigskip

\noindent {\bf Acknowledgments.}
The first author was partially supported by the National Science and
Engineering Research Council of Canada under operating grant 261955.  
The work performed on the project by the second author was partially supported by 
the NSF grant  DMS-1210258.

\section{Degeneate Hessians for Sobolev isometric immersions}\label{S:6}

In this section we  prove a proposition that reduces the case of
isometries to that of maps whose Hessian satisfies
a degeneracy condition.  This is a variant of a classical lemma
of  Cartan \cite{Cart}, which concerns smooth maps and has a correspondingly stronger conclusion.

\begin{proposition}\label{isom.-is-degenerate}
Assume that $\Omega \subset \R^n$ is a bounded open set, and that $U\in W^{2,2}(\Omega, \R^{n+k})$ is an isometric 
immersion of $\Omega$ into $\R^{n+k}$ for some 
$k\in \{1, \cdots, n-1\}$, i.e. $U$ satisfies 
\begin{equation}\label{isometry} 
U_{x^i} \cdot U_{x^j} =\delta_{ij}, \quad \forall i,j \in \{1,\cdots, n\}. 
\end{equation}  
Let $w := DU : \Omega\to \R^n\otimes \R^{n+k}  \cong \R^\ell$ for $\ell = n(n+k)$.
Then %, for a.e. point $x\in \Omega$,  
%$$
%\blue{N(x) := {\rm{ker}}(D w(x))}
%$$  
%is at least of dimension $n-k$.
%In particular
$$
\mbox{rank}(D w) \le k \quad a.e. \,\, in \,\,  \Omega. 
$$
 \end{proposition}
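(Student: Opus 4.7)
The plan is to reduce, at almost every Lebesgue point $x\in\Omega$, to an algebraic claim about $D^2U(x)$, and then invoke the classical algebraic lemma of Cartan.  The two pointwise a.e.\ facts to establish are: (A) $U_{x^ix^j}(x)\in N_x:=(\mathrm{span}\{U_{x^1}(x),\ldots,U_{x^n}(x)\})^\perp$ for all $i,j$, and (B) the Gauss equation $\langle U_{x^ix^l},U_{x^jx^k}\rangle = \langle U_{x^ix^k},U_{x^jx^l}\rangle$.  Setting $h^\alpha_{ij}(x):=\langle U_{x^ix^j}(x),\nu_\alpha\rangle$ in an orthonormal basis $\{\nu_\alpha\}_{\alpha=1}^k$ of $N_x$, one identifies $\mathrm{Image}(Dw(x))$ with $\{(h^1(x)\,c,\ldots,h^k(x)\,c):c\in\R^n\}\subset(\R^n)^k$, so that $\mathrm{rank}(Dw(x))=n-\dim\bigcap_\alpha\ker h^\alpha(x)$.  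Fact (B) is equivalent to the total symmetry of $\sum_\alpha h^\alpha\otimes h^\alpha$ as a tensor in $S^4(\R^n)$, and Cartan's lemma in exterior algebra, applied to the 1-forms $\omega_{i\alpha}:=\sum_j h^\alpha_{ij}\theta_j$, then yields $\dim\bigcap_\alpha\ker h^\alpha\ge n-k$, as desired.

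Fact (A) follows from a single distributional differentiation of the isometry condition.  Since $U_{x^i}$ is uniformly bounded (being of unit length) and $U_{x^ix^l}\in L^2_{\mathrm{loc}}$, the distributional product rule applied to $\langle U_{x^i},U_{x^j}\rangle=\delta_{ij}$ yields $\langle U_{x^ix^l},U_{x^j}\rangle+\langle U_{x^i},U_{x^jx^l}\rangle=0$ as an equality of $L^2_{\mathrm{loc}}$ functions.  The three cyclic permutations in $(i,j,l)$ then combine, using the symmetry of $D^2U$, to force $\langle U_{x^ix^j},U_{x^l}\rangle=0$ a.e.

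Fact (B) is the delicate step, because $D^3U$ exists only as an $H^{-1}_{\mathrm{loc}}$ distribution.  The argument proceeds by duality: $\partial_kU_{x^ix^l}=\partial_lU_{x^ix^k}$ as distributions by commutativity of distributional partials, so their pairings with any function in $H^1_0$ must agree.  Taking the test function $U_{x^j}\phi$ for $\phi\in C_c^\infty(\Omega)$, which lies in $H^1_0$ because $U_{x^j}\in W^{1,2}\cap L^\infty$, integration by parts gives
\[
-\int U_{x^ix^l}\,\partial_k(U_{x^j}\phi)\,dx \;=\; -\int U_{x^ix^k}\,\partial_l(U_{x^j}\phi)\,dx.
\]
Expanding via Leibniz and using that both $U_{x^ix^l}\,U_{x^j}$ and $U_{x^ix^k}\,U_{x^j}$ vanish in $L^2_{\mathrm{loc}}$ by (A), the terms containing $\partial_k\phi$ and $\partial_l\phi$ drop out, leaving $\int\phi\,U_{x^ix^l}U_{x^jx^k}\,dx=\int\phi\,U_{x^ix^k}U_{x^jx^l}\,dx$ for every $\phi$, which is (B).

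The main obstacle is the Sobolev-level derivation of (B): the classical proof for smooth isometric immersions passes through the Christoffel symbols of the induced metric and uses pointwise values of $D^3U$, which are not available at $W^{2,2}$ regularity.  The key observation enabling the argument above is that (A) holds as an $L^2$ identity (not merely pointwise a.e.), so that the two distributional representations of $D^3U$ can be paired with the non-smooth test function $U_{x^j}\phi$, and the resulting two integrations by parts, after the $L^2$-vanishing cancellations, yield a pointwise a.e.\ identity between $L^1$ products of $L^2$ functions.
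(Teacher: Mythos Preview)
Your proposal is correct and follows the same overall architecture as the paper: establish the orthogonality $U_{,ij}\cdot U_{,l}=0$ (your Fact (A), the paper's (2.3)), establish the Gauss equation $U_{,ij}\cdot U_{,kl}=U_{,il}\cdot U_{,jk}$ (your Fact (B), the paper's (2.2)), and then invoke Cartan's algebraic lemma on flat symmetric bilinear forms $\mathcal{B}:\R^n\times\R^n\to O(x)\cong\R^k$ to conclude $\dim\ker\mathcal{B}(x)\ge n-k$.

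The only substantive difference lies in the derivation of (B). The paper obtains it by approximating $U$ with smooth maps $U_m$, writing out the classical curvature identity
\[
g^m_{ij,kl}+g^m_{kl,ij}-g^m_{il,jk}-g^m_{jk,il}=-2\bigl(U_{m,ij}\cdot U_{m,kl}-U_{m,il}\cdot U_{m,jk}\bigr)
\]
for the approximate metrics $g^m_{ij}=U_{m,i}\cdot U_{m,j}$, and passing to the limit: the left side tends to zero distributionally because $g^m\to\delta$, while the right side converges in $L^1$. Your route is more direct: you pair the distributional identity $\partial_k U_{,il}=\partial_l U_{,ik}\in H^{-1}_{loc}$ against the test function $U_{,j}\phi\in H^1_0$ (legitimate since $U_{,j}\in W^{1,2}\cap L^\infty$), and then use (A) as an $L^2$ identity to cancel the terms carrying $\partial_k\phi$ and $\partial_l\phi$. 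Both arguments are valid; the paper's version makes the link to the smooth Riemann-curvature computation transparent, while yours avoids approximation altogether and isolates exactly which pointwise identity feeds into the cancellation.

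One small terminological point: what you invoke at the end is Cartan's lemma on \emph{flat} symmetric bilinear forms (the paper cites \cite{Cart} and \cite[Lemma~1]{VWKG}), not the exterior-algebra ``Cartan's lemma'' about $\sum\omega_i\wedge\eta_i=0$. The two are different statements, so the phrasing is slightly off, though the algebraic fact you actually use is the correct one.
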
 

In the proof of this result only, to simplify notation we will write  $U_{,i}$ to denote
partial differentiation with respect to the $i$th coordinate direction.

\begin{proof}

We will first establish the following identity:

\begin{equation}\label{flatness}
\ds U_{,ij}\cdot U_{,kl} - U_{,il}\cdot U_{,jk} =0 \quad \forall i,j,k,l \in \{1,\cdots, n\} \quad \mbox{a.e. in} \,\, \Omega.
\end{equation} Let $U_m\in C^\infty(\Omega, \R^{n+k})$  be a sequence of mappings converging to $U$ in the $W^{2,2}$-norm, and let 
$g^m_{ij}:= U_{m,i} \cdot U_{m,j}$. Twice differentiating $g^m_{ij}$ we obtain for all $i,j,k,l$:
$$
\ds g^m_{ij,kl} = U_{m,ikl} \cdot U_{m,j} + U_{m,ik} \cdot U_{m,jl} + U_{m,il} \cdot U_{m,jk} + U_{m,i} \cdot U_{m,jkl}.
$$ Permuting the indices and canceling the terms in third derivatives yields:
$$
\ds g^m_{ij,kl} + g^m_{kl,ij} - g^m_{il,jk} - g^m_{jk,il} = -2 (U_{m,ij} \cdot U_{m,kl} - U_{m,il} \cdot U_{m,jk}).
$$ Passing to the limit as $m\to \infty$, we observe that the left hand side converges in the sense of distributions to 0, 
while the right side converges in $L^1$ to  $-2(U_{,ij}\cdot U_{,kl} - U_{,il}\cdot U_{,jk})$. This establishes (\ref{flatness}). 
Our second observation is that
\begin{equation}\label{orthog} 
\ds U_{,ij} \cdot U_{,k} = 0 \quad \forall i,j,k\in \{1,\cdots, n\} \quad \mbox{a.e. in}\,\, \Omega.
\end{equation} This is straightforward to see, as differentiating the isometry constraint (\ref{isometry}) we obtain for all $i,j,k$:
$$
\ds 0= U_{,ik} \cdot U_{,j} + U_{,i} \cdot U_{,jk} =U_{,ij}\cdot U_{,k} + U_{,i}\cdot U_{,kj}= U_{,ki} \cdot U_{,j} + U_{,k} \cdot U_{,ji} ,
$$  where the two last identities are obtained by permutations in $i,j,k$ and all three are valid a.e. in $\Omega$. Now, adding the first two identities and 
subtracting the third implies (\ref{orthog}), considering that $U_{,ij} = U_{,ji}$ for all choices of $i,j$ a.e. in $\Omega$. 

In order to proceed, for any $x\in \Omega$ for which the identities (\ref{isometry}), (\ref{flatness}) and (\ref{orthog}) are valid- hence for a.e. $x\in\Omega$ -, 
we define the orthogonal space to the image $U(\Omega)$ at the point $U(x)$ to be:
$$
O(x):= \mbox{span} <U_{,1}(x), \cdots, U_{,n}(x)>^\perp,  
$$ and the symmetric bilinear form ${\mathcal B}(x) : \R^n \times \R^n \to O(x)$ by 
$$
\ds {\mathcal B}(x) (V, W)= W \cdot D^2 U(x) V:= \sum_{m=1}^{n+k} (W\cdot D^2 U^m(x) V) \vec {e}_m,
$$
where  $U= (U^1, \cdots, U^{n+k})$. 
Evidently (\ref{orthog}) implies that $\mathcal B(x)$ takes values in $O(x)$. On the other hand (\ref{flatness}) implies that for all $X,W,Y,Z\in \R^n$ we have
$$
\ds {\mathcal B}(x)(X, W)\cdot  {\mathcal B}(x) (Y,Z) -  {\mathcal B} (x) (X,Z)  \cdot {\mathcal B}(x) (Y,W) =0,
$$ i.e. the symmetric bilinear form $ {\mathcal B}(x)$ is flat with respect to the Euclidean scalar product on $O(x)$. Hence, we can apply a  
result due to E. Cartan \cite{Cart} (See also \cite[Lemma 1]{VWKG} for a proof), to obtain that 
$$
\ds \mbox{dim}(\ker {\mathcal B(x)}) \ge \mbox{dim} (\R^n) - \mbox{dim}(O(x)) = n-k,
$$ where
$$
\mbox{ker} \,  ({\mathcal B(x)}  ):= \{ V\in \R^n; {\mathcal B(x)} (V,W) =0 \,\, \forall W\in \R^n \} =  \ker(Dw(x)). %\bigcap_{m=1}^{n+k} {\rm{ker}}((D^2 U^m)(x)).
$$
 This completes the proof of the proposition. 
\end{proof}

\section{Degenerate Cartesian maps}\label{s:dcm}
 
In this section, $\Omega$  is as usual a bounded, open subset of $\R^n$, 
and $w$ is a map satisfying
% $w:\Omega \to \R^\ell$  ( for some $\ell$) is a map satisfying
\beq
w\in W^{1,k+1}_{loc}(\Omega, \R^\ell) \, ,
\quad\quad
\mbox{rank}\,(D w) \le k \ \ \mbox{a.e.}
\label{ww1kplus1}\eeq
for some $k\in \{ 1,\ldots, n-1\}$ and some $\ell \ge 1$. 
We will use the notation
\begin{align}
\Lambda_w 
&:= 
\{ x\in \Omega : x\mbox{ is a Lebesgue point of both $w$ and $Dw$} \}
\label{Lambdaw.def}\
\\
\Gamma 
&:= 
\{ (x, w(x)) : x\in \Lambda_w \} \subset \Omega\times \R^\ell
\label{Gamma.def}\\
\Gamma_h(\xi) &:= \{ x\in \Lambda_w :  w(x) = \xi \}     \label{Gammah.def}  \\
\Gamma_v  &:= \{ \xi \in \R^\ell \ : \ \calH^{n-k}( \Gamma_h(\xi) )>0 \} \label{Gammav.def} \\
 \Omega^k & = \{ x\in \Lambda_w : \mbox{rank}(Dw(x)) = k \} . \label{Omegak.def}
\end{align}
The main result of this section, stated below, will be used to make
precise the formal arguments discussed in Section \ref{background}.
Terminology appearing in the proposition
will be recalled after its statement.

\begin{proposition}
Assume that $w$ satisfies \eqref{ww1kplus1}. Then 
$\Gamma_v$ is $k$-rectifiable, and for $\calH^k$ a.e. $\xi\in \Gamma_v$, 
the following hold:
\begin{equation}
\mbox{ 
$\Gamma_h(\xi)$ is $\calH^{n-k}$-measurable and $n-k$-rectifiable}
\label{Gvs1}\end{equation}
\begin{equation}
T_\xi\Gamma_v = \mbox{Im}(Dw(x))
\  \mbox{ and } \ 
\ker(Dw(x)) = T_x\Gamma_h(\xi),
% \mbox{span} \{ p_v\tilde \tau_i\}_{i=1}^k  \qquad  
\ \ \quad\mbox{ $\calH^{n-k}$ a.e. in $\Gamma_h(\xi)$.}
\label{Jpos_ae}\end{equation}
In addition, for $\calH^k$ a.e. $\xi \in \Gamma_v$, there exists an integral
current $H_\xi$ in $\Omega\times \R^\ell$, defined explicitly
in \eqref{Hxi.def} below,
represented by integration over $\{ \xi\} \times \Gamma_h(\xi)$ such that
$\partial H_\xi = 0$. 
Finally, 
\begin{equation}
\calL^n\Big( \Omega^k   \setminus \cup_{\xi \in \Gamma_v^*} \Gamma_h(\xi) \Big) = 0,
\label{weakfoliate}\end{equation}
where
\begin{equation}
\Gamma_v^* := \{ \xi \in \Gamma_v : \partial H_\xi=0, \mbox{ and 
\eqref{Gvs1} and  \eqref{Jpos_ae} hold.} \}.
\label{Gvstar.def}\end{equation}
\label{P.weakfol}
\end{proposition}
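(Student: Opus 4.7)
The strategy is to realize $\Gamma$ as the carrier of an integer rectifiable $n$-current $G_w$ in $\Omega \times \R^\ell$ with vanishing interior boundary, and to read off all structural statements by projecting and slicing $G_w$. As a first step I would invoke the Cartesian map theory of Giaquinta, Modica and Sou\v cek \cite{gms}. Since $\mbox{rank}(Dw)\le k$ almost everywhere, the only $n\times n$ minors of $(I \,|\, Dw)$ that do not vanish identically involve at most $k$ columns coming from $Dw$, and so are products of at most $k$ partial derivatives of $w$. The hypothesis $w\in W^{1,k+1}_{loc}$ then ensures that all such minors are locally integrable with margin to spare, from which one concludes that the graph current $G_w$, carried by $\Gamma$, is a well-defined integer rectifiable $n$-current with $\partial G_w \rest (\Omega\times \R^\ell) = 0$.

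Next I would establish the $k$-rectifiability of $\Gamma_v$ by observing that at each $(x,w(x))\in\Gamma$ with $x\in\Lambda_w$ the approximate tangent $n$-plane to $\Gamma$ equals the graph of $Dw(x)$, so the restriction of $p_v$ to this tangent has rank $\le k$. The area formula applied piecewise along the rectifiable pieces of $\Gamma$ then places $p_v(\Gamma)$ inside a countable union of $k$-rectifiable sets, and $\Gamma_v$ coincides with $p_v(\Gamma\cap\Omega^k)$ up to an $\calH^k$-null set. To produce the slices, I would pick, at $\calH^k$-a.e.\ $\xi_0\in\Gamma_v$, an approximate tangent $k$-plane $V\subset\R^\ell$ for $\Gamma_v$ at $\xi_0$, so that $\Gamma_v$ is locally a Lipschitz graph over $V$ near $\xi_0$. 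Let $\pi:\Omega\times\R^\ell\to V$ denote the composition of $p_v$ with orthogonal projection $\R^\ell\to V$. By the slicing theorem, for $\calL^k$-a.e.\ $\eta\in V$ the slice $\langle G_w,\pi,\eta\rangle$ is an integer rectifiable $(n-k)$-current, and its boundary equals $\pm\langle\partial G_w,\pi,\eta\rangle=0$. Taking $\eta$ to be the image of $\xi$ and exploiting the graph structure of $\Gamma_v$ over $V$, one sees that this slice is supported in $\{\xi\}\times\Gamma_h(\xi)$, and setting $H_\xi$ equal to the slice yields \eqref{Gvs1}. The tangent identifications \eqref{Jpos_ae} then follow by splitting $T_{(x,\xi)}\Gamma=\mbox{graph}(Dw(x))$ into its vertical image $\mbox{Im}(Dw(x))$, which must equal $T_\xi\Gamma_v$, and its horizontal kernel $\ker(Dw(x))\subset\R^n$, which must equal $T_x\Gamma_h(\xi)$ at $\calH^{n-k}$-a.e.\ $x$ in the slice.

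The final claim \eqref{weakfoliate} follows from the Sobolev co-area formula applied to $w|_{\Omega^k}$: the $k$-Jacobian of $w$ is strictly positive on $\Omega^k$, so
\[
\int_{\Omega^k}J_kw\,dx \;=\; \int_{\Gamma_v}\calH^{n-k}\bigl(\Gamma_h(\xi)\cap\Omega^k\bigr)\,d\calH^k(\xi)
\]
forces $\calL^n$-a.e.\ $x\in\Omega^k$ to lie in some $\Gamma_h(\xi)$ of positive $\calH^{n-k}$-measure; removing the $\calH^k$-null set of $\xi$'s where any of the earlier constructions fails defines $\Gamma_v^*$. I expect the main difficulty to be the slicing step: ensuring that the slice $\langle G_w,\pi,\eta\rangle$ really is supported in the fiber $\{\xi\}\times\Gamma_h(\xi)$ of a single $\xi$ rather than in the larger set $\pi^{-1}(\eta)\cap\Gamma$, and simultaneously inheriting the vanishing of $\partial G_w$. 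This is where the $k$-rectifiability of $\Gamma_v$ is used critically: only at $\calH^k$-a.e.\ tangential $\xi_0$ does $\Gamma_v$ become a graph over its tangent $V$, which is precisely what is needed to convert a generic $V$-valued slicing into a genuine $\xi$-valued one.
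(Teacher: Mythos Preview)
Your overall architecture matches the paper's: build the graph current $G_w$ from \cite{gms}, show $\partial G_w=0$ in $\Omega\times\R^\ell$, and then extract $\Gamma_v$, $\Gamma_h(\xi)$ and $H_\xi$ by projecting and slicing. The rectifiability of $\Gamma_v$ and of almost every $\Gamma_h(\xi)$, as well as the identity $T_\xi\Gamma_v=\mbox{Im}(Dw(x))$, are indeed obtained in the paper essentially as you describe (via Federer's structure theorems and the coarea formula applied to $p_v|_\Gamma$). The coarea argument for \eqref{weakfoliate} is also what the paper does.

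The gap is in your slicing step. You assert that at $\calH^k$-a.e.\ $\xi_0\in\Gamma_v$ the set $\Gamma_v$ is \emph{locally a Lipschitz graph} over its approximate tangent plane $V$, and you rely on this to conclude that the slice $\langle G_w,\pi,\eta\rangle$ is carried by a single vertical fiber $\{\xi\}\times\Gamma_h(\xi)$. But a $k$-rectifiable set is not, in general, a graph over its approximate tangent plane in any neighborhood, even at $\calH^k$-a.e.\ point: other rectifiable pieces of $\Gamma_v$ can accumulate near $\xi_0$ and project onto $V$ over the same $\eta$. So $\pi^{-1}(\eta)\cap\Gamma$ can meet several distinct fibers $\{\xi\}\times\Gamma_h(\xi)$, and nothing you have said separates them or shows that each piece individually has zero boundary. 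You correctly flag this as the crux, but the proposed resolution does not hold.

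The paper handles this point by a different mechanism that does not need any graph property of $\Gamma_v$. It slices $G_w$ by the coordinate maps $q_\beta(x,\xi)=(\xi^{\beta_1},\dots,\xi^{\beta_k})$ and then invokes Solomon's Separation Lemma: since $G_w$ annihilates all forms with more than $k$ vertical factors (this is exactly the rank condition), the iterated slice $\langle\langle G_w,q_\beta,y\rangle,q_i,s\rangle$ vanishes for every extra vertical coordinate $\xi^i$, and Solomon's lemma then forces each indecomposable piece of $\langle G_w,q_\beta,y\rangle$ to sit in a single level set of $p_v$. This is what localizes the slice to one fiber and simultaneously passes $\partial=0$ to $H_\xi$. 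The identification $\ker(Dw(x))=T_x\Gamma_h(\xi)$ is then read off from an explicit factorization $\tau=J_kp_v\cdot\tau_v\wedge\tau_h$ of the orienting $n$-vector of $\Gamma$, rather than from an abstract splitting of the tangent space. If you want to keep your tangent-plane slicing, you would still need an argument of Solomon type (using the vanishing of higher vertical minors) to decompose the slice fiberwise.
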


This is related to results in \cite{J2010}, proved in the more
abstract setting of Monge-Amp\`ere functions. 
Here, we are able to exploit
the Sobolev regularity and results of Giaquinta {\em  et al} \cite{gms}
to extract more information than in \cite{J2010},
such as conclusions \eqref{Jpos_ae}, which are new. 
We also believe that the arguments given here are more 
transparent than those of \cite{J2010}.

\begin{remark} 
We emphasize that $\Gamma$ and  $\Gamma_v$  may differ from the  
graph $\{ (x,w(x)) : x\in \Omega\}$ and the image $w(\Omega)$
by sets of positive $\calH^n$ measure.
Indeed, \cite{MalyMartio} establishes the existence of a continuous mapping $w\in W^{1,n}(\Omega;\R^n)$ with vanishing Jacobian $($i.e. $k=n-1)$, 
for which $w(\Omega)$ has positive measure. In this construction, the bulk of the image is obtained by  applying $w$ to the null set $\Omega\setminus \Lambda_w$, and in fact
Proposition \ref{P.weakfol} shows that  $\Gamma_v$ is an $n-1$-rectifiable set.
\end{remark} 

We start by recalling some definitions.
For more background, one can consult for example
\cite{gms} for
a general introduction to geometric measure theory in product spaces 
and whose notation we have tried to follow.

If $U\subset \R^L$ for some $L$, then we  say that $\Gamma\subset U$ is $j$-rectifiable if 
\[
\Gamma\subset M_0 \cup \bigcup_{q=1}^\infty f_q(\R^j),\qquad\mbox{ where }\calH^j(M_0)=0\mbox{ and }
f_q:\R^j\to U \  \mbox{ is Lipschitz}.
\]
It is a standard fact that a $j$-rectifiable set $\Gamma$ has a $j$-dimensional
approximate tangent plane, denoted $T_y\Gamma$, at $\calH^j$ almost every
$y\in \Gamma$.

If $\PP$ is a $j$-dimensional plane in some $\R^L$, then a {\em unit $j$-vector orienting $\PP$}
is a $j$-vector (that is, an element of the space $\Lambda_j \R^L$) of the form $\tau = \tau_1\wedge \cdots \wedge \tau_j$, where $\{\tau_i\}_{i-1}^j$
form an orthonormal basis for the tangent space to $\PP$.

Let $\calD^j(U)$ denote the space of smooth, compactly supported $j$-forms on $U$.

Heuristically, $j$-currents supported in $U$ are ``generalized submanifolds" of dimension $j$,
defined by duality to $\calD^j(U)$. Integer multiplicity (henceforth abbreviated as {\em i.m.}) rectifiable currents  are those which are represented by a superposition of rectifiable sets. 
More precisely,
an i.m. rectifiable $j$-current $T$ in $U$
is a bounded
linear functional on
$\calD^j(U)$
that may be represented in the form
\beq
T(\phi) \ = \ \int_\Gamma  \langle \phi, \tau\rangle \, \theta \, d\calH^n
\label{Tform}\eeq
where 
\begin{itemize}
\item$\Gamma$ is a $j$-rectifiable set,
\item$\theta:\Gamma\to {\mathbb N}$ is a $\calH^j$-measurable function,
locally integrable with respect to $\calH^j\rest \Gamma$;
and 
\item $\tau$ is a $\calH^j$-measurable function from $\Gamma$
into the space $\Lambda_j\R^L$ of $j$-vectors on $\R^L$, such that 
$\tau(y)$ is a unit $j$-vector that orients 
the approximate tangent space $T_y\Gamma$, for a.e. $y\in \Gamma$.
\end{itemize}
In \eqref{Tform}, we write $\langle \phi(y), \tau(y)\rangle$ to denote the dual pairing
between a $j$-covector $\phi(y)\in \Lambda^j\R^L$ and a $j$-vector $\tau(y)\in \Lambda_j\R^L$;
see \eqref{bracket.def} below for a concrete definition in the
product space setting.

When \eqref{Tform} holds,  we say that $T$ is represented by integration over $\Gamma$.

\medskip

We next introduce  notation needed to write these objects more explicitly, and in particular to write currents and differential forms in  the product space $U \ \Omega\times \R^\ell$.
 For $1\le j\le m$, we define
\beq
I(j,m) := \{ \alpha = (\alpha_1,\ldots, \alpha_j) : 1 \le \alpha_1 < \ldots <\alpha_j \le m\}.
\label{Ikm.def}\eeq
If $\alpha\in I(j,m)$ then $|\alpha| :=j$. 
We will think of $I(0,m)$ as consisting of a single element, ``the empty multiindex",
which we will denote $0$.

If $S = (S^i_j)$ is an $\ell\times n$ matrix (with $i$ running from $1$ to $\ell$ and $j$ from $1$ to $n$)
and  $\beta \in I(j,\ell), \gamma\in I(j,n)$ for some $j$
then 
\beq
S^\beta_\gamma = (S^{\beta_i}_{\gamma_{i'}} )_{i, {i'} = 1}^j \ , 
\quad\qquad
M^\beta_{\gamma}(S) := \det S^\beta_{\gamma}.
\label{minor.not}\eeq
We refer to $M^\beta_\gamma(S)$ as a {\em minor} of $S$ of order $j$.
%We impose the convention $M^0_0(S)=1$ for every $S$. \blue{ is this really needed?}

We will  write points in $\Omega\times \R^\ell$ in the form $(x,\xi)$,
and we will write $\{ e_i \}_{i=1}^n$ and $\{ \e_j\}_{j=1}^\ell$
%and $ \{ {\mathfrak e} \}_{k=1}^{n+k}$
 to denote
the standard bases for the spaces 
\[
\R^n_h := \R^n\times \{0\} \ \ \ \ \
\ \ \ \mbox{ and }\ \ \ \ \ \R^\ell_v :=\{0 \}\times \R^\ell
\]
of ``horizontal'' and ``vertical" vectors.
For $\alpha\in I(j,n)$,  we set 
\[
dx^\alpha :=
dx^{\alpha_1}\wedge\ldots\wedge dx^{\alpha_j},
\qquad
e_\alpha := e_{\alpha_1}\wedge\ldots\wedge e_{\alpha_j}
\]
and similarly $d\xi^\beta$ and $e_\beta$, for $\beta \in I(j, \ell)$.
Thus, for example, every $n$-form in $\Omega\times \R^\ell$ may
be written 
\beq
\phi = \sum_{|\alpha|+ |\beta| = n} \phi_{\alpha\beta}(x,\xi) dx^\alpha\wedge d\xi^\beta,
\label{nform}\eeq
where it is understood that $\alpha\in I(*,n)$ and $\beta\in I(*,\ell)$. 
The dual pairing appearing in \eqref{Tform} is defined by
\begin{equation}
\langle  \sum_{|\alpha|+ |\beta| = n} \phi_{\alpha\beta} dx^\alpha\wedge d\xi^\beta,
\sum_{|\delta|+ |\gamma| = n} \tau^{\delta\gamma} \, e_\delta \wedge \ep_\gamma
\rangle \ = \   \sum_{|\alpha|+ |\beta| = n} \phi_{\alpha\beta} \tau^{\alpha\beta}.
\label{bracket.def}\end{equation}

%We write $\calD^\ell(\Omega\times \R^n)$ to denote the space of
%all $C^\infty$ $\ell$-forms with compact support
%in $\Omega\times \R^n$. 
%For a covector $\phi = \sum_{|\alpha| +|\beta|= j+k} \phi^{\alpha\beta}dx_\alpha\wedge d\xi_\beta$,we will write
%\beq
%P^{j, k}\phi = 
%\sum_{|\alpha|=j,|\beta|=k} \phi^{\alpha\beta}dx_\alpha\wedge d\xi_\beta.
%\label{Pjk.def}\eeq
%For  a differential form $\phi\in \calD^{j+k}(\Omega\times \R^n)$,
%we define $(P^{j,k} \phi)(x,\xi) = P^{j,k} \phi(x,\xi)$.
%For a current $T\in \calD_{j+ k}(\Omega \times \R^n)$, we define
%\[
%P_{j,k}T (\phi) \ = \   T(P^{j,k}\phi).
%\]
%In this notation, the equation $\det D^2u=0$ (in the {\em strong} sense for MA functions) is written
% $P_{0,n}[du] =0$. 

Given  $\alpha\in I(j,n)$, we will write $\bar \alpha$ 
to denote the complementary multiindex, such that $(\alpha, \bar \alpha)$ is
a permutation of $(1,\ldots, n)$, and we write $\sigma(\alpha, \bar \alpha)$ to denote the
sign of this permutation. 
Hence $\bar \alpha$ and $\sigma(\alpha, \bar \alpha)$ are characterized by the conditions
\[
|\alpha| + |\bar \alpha|= n\qquad\mbox{ and }\qquad dx^\alpha\wedge dx^{\bar \alpha} =  \sigma(\alpha, \bar\alpha) dx^1\wedge \ldots\wedge dx^n.
\]
We then define the 
$n$-current $G_w$ by
\beq
G_w(\phi \ dx^\alpha \wedge d\xi^\beta) \ = \ 
 \sigma(\alpha,\bar\alpha)\int_{\Omega} \phi(x, w(x)) M^\beta_{\bar\alpha}(Dw) \ dx,
\label{Gw.def}\eeq
for $\phi\in C^\infty_c(\Omega\times \R^n)$ and  $|\alpha|+|\beta|=n$. 
(We use the convention that $M^0_0(Dw) = 1$.)

We will repeatedly use the fact that
\begin{equation}
G_w(\phi \, dx^\alpha\ \wedge d\xi^\beta) = 0 \qquad\mbox{ if }|\beta|\ge k+1,
\label{nulll}\end{equation}
which is a direct consequence of  \eqref{ww1kplus1}.
A computation (see \cite{gms}, section 3.2.1) shows that
\[
G_w(\phi) = \int_{\Lambda_w} W^* \phi,\	\qquad\mbox{ for every $n$-form $\phi$ in $\Omega\times \R^\ell$, where }W(x) := (x, w(x))
\]
and the pullback $W^*\phi$ is defined pointwise in $\Lambda_w$. 
Thus, $G_w$ formally looks like integration over the (oriented) graph of $w$; this is the motivation
for the definition of $G_w$. The next
lemma collects some useful observations of
Giaquinta, Modica and Sou\v cek \cite{gms} which clarify the sense in which this is,
and is not, the case.

\begin{lemma}
Assume that $w$ satisfies \eqref{ww1kplus1}.
Then: % the following hold.
\begin{enumerate}

\item 
The restriction of $W(x) = (x, w(x))$ to $\Lambda_w$ maps  $\calL^n$ null sets to $\calH^n$ null sets.

\item $\Gamma$ is $n$-rectifiable.

\item 
For $\calH^n$ a.e. point $W(x)\in \Gamma$, with $x\in \Lambda_w$,
\beq
T_{W(x)}\Gamma = \mbox{Im}(DW(x))
% \mbox{span}\{ \tau_i\}_{i=1,\ldots, n}, \qquad\mbox{ where } \tau_i = DW \, e_i \ = \ e_i + \sum_{j=1}^\ell w_{x^i}^j \e_j.
\label{aptx}\eeq

\item $G_w$ is an i.m. rectifiable $n$-current represented by integration over 
$\Gamma$. Indeed, for every compactly supported $n$-form $\phi$ in $\Omega\times \R^\ell$,
\begin{equation}
G_w(\phi) = \int_\Gamma \langle \phi, \tau\rangle d\calH^n,\qquad\mbox{
where }\quad
\tau(x, \xi) = \frac { W_{x^1} (x)\wedge \ldots \wedge W_{x^n}(x)}{| W_{x^1}(x)\wedge \ldots \wedge W_{x^n}(x)|}.
\label{Gw.rep}\end{equation}

\item If $K$ is a compact subset of $\Omega$, then
$ \| G_w\| (K\times \R^\ell)  = \calH^n( \Gamma \cap (K\times \R^\ell) )<\infty$,
where $\|G_w \|$ denotes the total variation measure associated to $G_w$.

\end{enumerate}
\label{L.gms}
\end{lemma}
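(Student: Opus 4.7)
My plan is to deduce all five assertions from a Lusin-type approximation of Sobolev maps on their Lebesgue sets, thereby reducing everything to standard facts about graphs of Lipschitz maps together with the area formula. The key input, available since $w \in W^{1,k+1}_{loc}$ and every $x \in \Lambda_w$ is a Lebesgue point of both $w$ and $Dw$, is the existence, for every $\e > 0$, of a closed set $K_\e \subset \Lambda_w$ and a Lipschitz map $\tilde w_\e : \R^n \to \R^\ell$ with $\calL^n(\Omega \setminus K_\e) < \e$, $\tilde w_\e = w$ on $K_\e$, and $D\tilde w_\e = Dw$ on $K_\e$; this is Federer's classical Lusin-type theorem for Sobolev functions, and is also proved in \cite{gms}. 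First I will exhaust $\Lambda_w$, up to an $\calL^n$-null set, by a countable union of such $K_{\e_j}$'s. The associated lifted maps $\tilde W_{\e_j}(x) := (x,\tilde w_{\e_j}(x))$ are then genuinely Lipschitz on all of $\R^n$.

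Item (1) follows because Lipschitz maps have Lusin's property (N) with respect to $\calH^n$: any $\calL^n$-null $E \subset \Lambda_w$ is covered up to null measure by the pieces $E \cap K_{\e_j}$, on each of which $W = \tilde W_{\e_j}$, so $\calH^n(W(E)) = 0$. For (2), each $\tilde W_{\e_j}(K_{\e_j})$ is a Lipschitz image of a bounded subset of $\R^n$ and hence $n$-rectifiable; together with (1) this covers $\Gamma = W(\Lambda_w)$ up to $\calH^n$-nullity by $n$-rectifiable sets, yielding the rectifiability of $\Gamma$ itself. For (3), I will invoke the standard fact that at an $\calL^n$-density point $x$ of $K_{\e_j}$ which is a Lebesgue point of $Dw$, the approximate tangent plane to $\tilde W_{\e_j}(K_{\e_j})$ at $\tilde W_{\e_j}(x)$ exists and equals $\mathrm{Im}(D\tilde W_{\e_j}(x)) = \mathrm{Im}(DW(x))$; such points exhaust $\Gamma$ up to $\calH^n$-nullity.

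For (4)--(5) I will combine the above with the area formula. A direct pointwise calculation shows that $W^*(dx^\alpha \wedge d\xi^\beta) = \sigma(\alpha,\bar\alpha) M^\beta_{\bar\alpha}(Dw) \, dx^1\wedge\dots\wedge dx^n$ at every Lebesgue point of $Dw$, so the defining formula \eqref{Gw.def} is precisely $G_w(\phi\, dx^\alpha \wedge d\xi^\beta) = \int_{\Lambda_w} W^*(\phi\, dx^\alpha \wedge d\xi^\beta)$. Applied on each $K_{\e_j}$ where $W = \tilde W_{\e_j}$, the Lipschitz area formula converts this into an integral of $\langle \phi, \tau\rangle$ over $\tilde W_{\e_j}(K_{\e_j})$ with multiplicity one (the injectivity of $W$ is automatic from the identity in the horizontal component). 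Summing over $j$ and passing to the limit produces the representation \eqref{Gw.rep} and identifies $\|G_w\|$ with $\calH^n \rest \Gamma$, yielding (5).

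The main technical subtlety lies in (5): the orientation $\tau$ is, up to normalization, the $n$-vector $W_{x^1}\wedge \cdots \wedge W_{x^n}$, whose coefficients are minors of $DW$ of every order up to $n$, and a priori these minors need not be integrable under the regularity $Dw \in L^{k+1}_{loc}$ alone. The rank hypothesis in \eqref{ww1kplus1} is crucial here: it forces every minor of $Dw$ of order strictly greater than $k$ to vanish almost everywhere, so only minors of order at most $k$ actually appear in the expansion of $|W_{x^1}\wedge \cdots \wedge W_{x^n}|$, and Hölder's inequality applied to $Dw \in L^{k+1}_{loc}$ then makes all the relevant integrals finite on compact subsets of $\Omega$.
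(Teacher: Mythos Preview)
Your proposal is correct and follows essentially the same approach as the paper, which simply observes that assumption \eqref{ww1kplus1} guarantees approximate differentiability of $w$ and local integrability of all minors of $Dw$ (the rank condition killing minors of order exceeding $k$, exactly as you note), and then cites sections 3.1.5 and 3.2.1 of \cite{gms} for all five conclusions. Your sketch is in effect an outline of the arguments found in those sections of \cite{gms}: Lusin-type Lipschitz approximation on the Lebesgue set together with the area formula yields items (1)--(5) in the manner you describe.
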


\begin{proof}
It follows from assumption \eqref{ww1kplus1} that 
$w$ is a.e. approximately differentiable,
and all minors of $Dw$ are locally integrable.
These are exactly the hypotheses of 
results in Giaquinta {\em et.}\,{\em al.} \cite{gms},
see  in particular sections 3.1.5 and 3.2.1
which  establish all the conclusions of the lemma.
\end{proof}

Under the conditions of Lemma \ref{L.gms},
the set $\Gamma$ which carries $G_w$ 
can differ from the actual graph
$\{( x, w(x)) : x\in \Omega\}$ by a set of positive $\calH^n$ measure;
see for example \cite{MalyMartio}.
As we show below, it is nonetheless true that the current
$G_w$ associated to $\Gamma$ has no boundary in $\Omega\times \R^\ell$.
For this we need the full strength of assumption \eqref{ww1kplus1}; for Lemma \ref{L.gms} above, it in fact suffices to assume that $w\in W^{1,k}_{loc}$.

\begin{lemma}
If $w$ satisfies \eqref{ww1kplus1} and  $G_w$ 
is the $n$-current defined in \eqref{Gw.def}, then
\beq
\partial G_w = 0\qquad\mbox{ in }\Omega\times \R^\ell .%\qquad\mbox{ and }
\label{MA1}\eeq
\label{L.Gw}\end{lemma}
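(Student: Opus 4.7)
The plan is a mollification-and-limit argument, exploiting the observation that the Sobolev regularity $W^{1,k+1}_{loc}$ controls precisely those minors of $Dw$ that enter a boundary computation against test $(n-1)$-forms whose vertical degree does not exceed $k$.

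\emph{Reduction.} It suffices to show $G_w(d\psi)=0$ for $\psi = \phi(x,\xi)\,dx^\alpha\wedge d\xi^\beta$ with $\phi\in C^\infty_c(\Omega\times\R^\ell)$ and $|\alpha|+|\beta|=n-1$. Writing
\[
d\psi \;=\; \sum_i \phi_{x^i}\,dx^i\wedge dx^\alpha\wedge d\xi^\beta \;+\; \sum_j \phi_{\xi^j}\,d\xi^j\wedge dx^\alpha\wedge d\xi^\beta,
\]
each summand has vertical degree $|\beta|$ or $|\beta|+1$. If $|\beta|\ge k+1$, then by \eqref{nulll} every term sits in the kernel of $G_w$ and the claim is trivial. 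Hence I may assume $|\beta|\le k$, so only minors of order $\le k+1$ of $Dw$ enter.

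\emph{Smooth case.} Fix a compact $K\subset\Omega$ with $\mathrm{supp}(\psi)\subset K\times \R^\ell$ and mollify: $w_m := w*\rho_{1/m}$. For $m$ large, $w_m$ is smooth on a slightly enlarged compact $K'\subset\Omega$ and $w_m\to w$ in $W^{1,k+1}(K')$. A direct cofactor expansion shows that, for the graph map $W_m(x):=(x,w_m(x))$, one has $W_m^*(dx^\alpha\wedge d\xi^\beta) = \sigma(\alpha,\bar\alpha)\,M^\beta_{\bar\alpha}(Dw_m)\,dx^1\wedge\cdots\wedge dx^n$, whence $G_{w_m}(\eta) = \int_\Omega W_m^*\eta$ for every compactly supported smooth $n$-form $\eta$. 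Since $\mathrm{supp}(W_m^*\psi)\subset K$, naturality of the pullback and Stokes' theorem on $\Omega$ give
\[
G_{w_m}(d\psi) \;=\; \int_\Omega W_m^*(d\psi) \;=\; \int_\Omega d(W_m^*\psi) \;=\; 0.
\]

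\emph{Passage to the limit.} Each term of $G_{w_m}(d\psi)$ has the form $\pm\int_K \Phi(x,w_m(x))\,M^{\beta'}_{\bar\gamma}(Dw_m)\,dx$, with $\Phi$ a smooth partial derivative of $\phi$ and $|\beta'|\le k+1$. Multilinearity of the determinant and H\"older's inequality give $M^{\beta'}_{\bar\gamma}(Dw_m)\to M^{\beta'}_{\bar\gamma}(Dw)$ in $L^{(k+1)/|\beta'|}(K')\hookrightarrow L^1(K)$; crucially, when $|\beta'|=k+1$ the limit vanishes a.e.\ by $\mathrm{rank}(Dw)\le k$. After extracting a subsequence along which $w_m\to w$ a.e., dominated convergence yields $\Phi(x,w_m(x))\to\Phi(x,w(x))$ boundedly and a.e., so each integrand converges in $L^1(K)$ and $G_w(d\psi) = \lim_m G_{w_m}(d\psi) = 0$. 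The delicate point is that $(k+1)$-minors of $Dw_m$ do \emph{not} vanish for finite $m$, only in the limit; the argument succeeds because $W^{1,k+1}$ convergence is exactly strong enough to send them to $0$ in $L^1$, while no higher-order minors appear thanks to the first step.
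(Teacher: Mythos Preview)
Your proof is correct and follows essentially the same route as the paper: split according to the vertical degree $|\beta|$ of the test $(n-1)$-form, dispose of $|\beta|\ge k+1$ via \eqref{nulll}, and for $|\beta|\le k$ approximate $w$ by smooth maps in $W^{1,k+1}_{loc}$, use that the boundary identity holds in the smooth case, and pass to the limit using $L^1_{loc}$ convergence of all minors of order $\le k+1$. Your explicit invocation of Stokes' theorem for the smooth case and the H\"older estimate for minor convergence are just spelled-out versions of what the paper does in one line; the aside about $(k+1)$-minors vanishing in the limit is true but not actually needed, since convergence of $G_{w_m}(d\psi)$ to $G_w(d\psi)$ already follows from $L^1$ convergence of the minors regardless of whether the limiting minors vanish.
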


\begin{remark} The Lemma implies that if $u$ is a scalar function
and $w=Du$ satisfies \eqref{ww1kplus1}, then $u$
is a Monge-Amp\`ere function, see \cite{fu1, J2010}.
\end{remark}

\begin{proof}
%Since rank$(Dw)\le k$ {\em a.e.}, the definition of $G_w$ becomes
%\beq
%G_w( \phi dx^\alpha \wedge d\xi^\beta)
% =
%\begin{cases}
% \sigma(\alpha, \bar\alpha) \int_\Omega 
%\phi(x, w(x)) M_{\bar \alpha}^{ \beta} (Dw)\ dx
%&\mbox{ if }|\beta| = n-|\alpha| \le k\\
%0
%&\mbox{ if }|\beta| = n-|\alpha| \ge k+1
%\end{cases}
%\label{Gw.rank.k}\eeq
We must check that 
\begin{equation}
0 = G_w( d(\phi\, dx^\alpha\wedge d\xi^\beta))
=
 G_w( \phi_{x^i} dx^i\wedge dx^\alpha\wedge d\xi^\beta) + 
 G_w( \phi_{\xi^j} d\xi^j\wedge dx^\alpha\wedge d\xi^\beta) 
\label{no.bdy}\end{equation}
for all $\phi\in C^\infty_c(\Omega\times \R^\ell)$ and $\alpha,\beta$ such that  $|\alpha|+|\beta|=n-1$.
The terms on the right-hand side have the form
\beq
\int_\Omega \phi_{x^i}(x, w) \cdot (\mbox{minor of order $|\beta|$})
+
\int_\Omega \phi_{\xi^i}(x, w) \cdot (\mbox{minor of order $|\beta|+1$}).
\label{nobdy2}\eeq
If $|\beta|\ge k+1$ then the assumption that rank$(Dw)\le k$ a.e.
implies that all such terms vanish, and hence that \eqref{no.bdy} holds. If $|\beta|\le k$, 
then let $w_q$ be a sequence of smooth functions converging to $w$
in $W^{1,k+1}_{loc}(\Omega, \R^\ell)$. 
For each $w_q$, \eqref{no.bdy} holds (with $w$ replaced by $w_q$).  Also, all minors of $Dw_q$ 
appearing in \eqref{nobdy2} have order at most
$k+1$, and hence
converge in $L^1_{loc}$ to the corresponding minors of $Dw$.
And we can arrange after passing to a subsequence that
\[
\left. \begin{array}{l}
\phi_{x^i}(x, w_q(x)) \to \phi_{x^i}(x, w(x))\\
\phi_{\xi^j}(x, w_q(x)) \to \phi_{\xi^j}(x, w(x))
\end{array}
\right\}\qquad
\mbox{$\calL^n$ a.e. $x$, \  as $q\to \infty$ }
\]
for all $i$ and $j$.
These terms are also pointwise bounded uniformly in $q$ (by $\|\nabla\phi\|_\infty$).
We can thus send $q\to \infty$ to conclude that \eqref{no.bdy} holds for $w$.
\end{proof}

Below, we write $J_k p_v$
for the $k$-dimensional Jacobian (in the sense of \cite{federer} 3.2.22)
of $p_v :\Gamma\to \R^\ell_v$, the point being that %when considering Jacobians, 
we implicitly restrict
the domain of $p_v$ to $\Gamma$.
Similarly, for $A\subset \R^\ell_v$, we understand
$p_v^{-1}(A)$ to mean $\{(x,\xi) \in \Gamma : \xi\in A\}$.

We can now prove Proposition \ref{P.weakfol}. In doing so, we establish a number of
additional facts that we record here:

%properties of the basis $\{\tilde \tau_i\}$ from Lemma \ref{L.goodbasis}.

\begin{lemma} Assume that $w$ satisfies \eqref{ww1kplus1} and let   $G_w$, $\Gamma_v$ and $\Gamma_h$ be defined, respectively,  
as in  \eqref{Gw.def}, \eqref{Gammav.def} and \eqref{Gammah.def}.
Then there exist measurable mappings $\tau_v : \Gamma_v\to \Lambda_k\R^\ell_v$ and
$\tau_h:p_v^{-1}(\Gamma_v)\to \Lambda_{n-k} (\R^n_h)$ such that
$\tau_v$ and $\tau_h$ are a.e.
unit simple multivectors orienting $T_\xi \Gamma_v$
and $T_{(x,\xi)}(\{\xi\} \times \Gamma_h (\xi))$, and
\begin{equation}
G_w( \chi \, d\xi^\beta\wedge  \psi) = \int_{\Gamma_v} 
H_\xi(\psi)
 \langle d\xi^\beta, \tau_v \rangle \ \chi  \   d\calH^k
\label{Gw.decomp}\end{equation}
for $\beta\in I(k,\ell) $,  $\psi\in \calD^{n-k}(\Omega\times \R^\ell_v)$ and $\chi\in C^\infty(\R^\ell)$, where
\begin{equation}
H_\xi(\psi) := \int_{\{\xi \}\times \Gamma_h(\xi)} \langle \psi,  \tau_h \rangle \, d\calH^{n-k}\qquad\mbox{ for  } 
\psi\in \calD^{n-k}(\Omega\times \R^\ell).
\label{Hxi.def}\end{equation}
\label{L.weakfol}
\end{lemma}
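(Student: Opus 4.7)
The plan is to exploit $G_w(\phi)=\int_\Gamma\langle \phi,\tau\rangle\, d\calH^n$ from Lemma~\ref{L.gms}(4), factor $\tau$ into horizontal and vertical pieces using the rank hypothesis, and then derive \eqref{Gw.decomp} via the coarea formula for $w$. At $\calH^n$-a.e.\ $(x,\xi)\in\Gamma$, Lemma~\ref{L.gms}(3) gives $T_{(x,\xi)}\Gamma=\mathrm{Im}(DW(x))=\{(h,Dw(x)h):h\in\R^n\}$. Expanding $W_{x^1}\wedge\cdots\wedge W_{x^n}$ by vertical degree, assumption \eqref{ww1kplus1} forces every component of vertical degree larger than $k$ to vanish. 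On $\Omega^k$, a direct computation using an orthonormal basis of $\R^n$ adapted to $\ker Dw(x)$ and the singular value decomposition of $Dw$ restricted to $(\ker Dw)^\perp$ shows that the top-degree component is simple and
\[
\tau^{(k)}(x,\xi)\ =\ \frac{J_k w(x)}{J_n W(x)}\,\tau_h(x)\wedge \tau_v(x),
\]
where $\tau_h(x)\in\Lambda_{n-k}\R^n_h$ is a unit simple $(n-k)$-vector orienting $\ker Dw(x)$ and $\tau_v(x)\in\Lambda_k\R^\ell_v$ is a unit simple $k$-vector orienting $\mathrm{Im}(Dw(x))$.

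Next, I would invoke the Sobolev coarea formula (available under \eqref{ww1kplus1}),
\[
\int_\Omega f\cdot J_k w\, dx \ =\ \int_{\R^\ell}\int_{\Gamma_h(\xi)} f\, d\calH^{n-k}\, d\calH^k(\xi),
\]
which yields the $(n-k)$-rectifiability of $\Gamma_h(\xi)$ for $\calH^k$-a.e.\ $\xi\in\Gamma_v$, with approximate tangent space $\ker Dw(x)$ at each $x\in\Gamma_h(\xi)\cap\Omega^k$, so that $\tau_h(x)$ orients $T_{(x,\xi)}(\{\xi\}\times\Gamma_h(\xi))$. Conversely, a Lipschitz decomposition of $w|_{\Omega^k}$ writes $\Omega^k$ as a countable union of Lipschitz pieces of rank at most $k$; the image of each lies in a $k$-rectifiable set, so $\Gamma_v$ itself is $k$-rectifiable with $T_\xi\Gamma_v=\mathrm{Im}(Dw(x))$ for any $x\in\Gamma_h(\xi)\cap\Omega^k$. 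In particular $\tau_v$ depends only on $\xi$, and both $\tau_h,\tau_v$ are measurable.

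Finally, for $\chi\in C^\infty(\R^\ell)$, $\beta\in I(k,\ell)$ and $\psi\in\calD^{n-k}(\Omega\times\R^\ell)$, only $\tau^{(k)}$ and the purely horizontal part $\psi_h=\sum_{|\mu|=n-k}\psi_{\mu 0}\,dx^\mu$ of $\psi$ contribute to $\langle d\xi^\beta\wedge\psi,\tau\rangle$, because any additional vertical factor in $\psi$ would require vertical degree $>k$ in $\tau$. Using the product structure of $\tau^{(k)}$,
\[
\langle d\xi^\beta\wedge\psi_h,\tau^{(k)}\rangle\ =\ (-1)^{k(n-k)}\,\frac{J_k w}{J_n W}\,\langle \psi_h,\tau_h\rangle\,\langle d\xi^\beta,\tau_v\rangle,
\]
so substituting into $G_w(\chi\,d\xi^\beta\wedge\psi)=\int_\Gamma \chi(\xi)\langle d\xi^\beta\wedge\psi,\tau\rangle\,d\calH^n$, pulling back via the area formula $d\calH^n\rest\Gamma = J_n W\,dx$, and applying the coarea identity above collapses the $x$-integral over fibres and produces \eqref{Gw.decomp} with $H_\xi$ as in \eqref{Hxi.def}.

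The main technical subtlety I anticipate is a consistent, measurable choice of orientations: $\tau_h$ and $\tau_v$ are each determined only up to sign by the subspaces they span, while their wedge must reproduce $\tau^{(k)}$ with the sign $(-1)^{k(n-k)}$ appearing above. I would fix $\tau_v$ on $\Gamma_v$ directly from $\tau^{(k)}$, so as to absorb this sign into the orientation, and let the factorisation define $\tau_h$ on $p_v^{-1}(\Gamma_v)$; measurability of both then follows from that of $\tau$ and $Dw$.
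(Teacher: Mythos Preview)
Your approach is essentially the same as the paper's: factor the orienting $n$-vector $\tau$ of $\Gamma$ into a horizontal $(n-k)$-vector $\tau_h$ and a vertical $k$-vector $\tau_v$ using the rank condition, then apply a coarea formula to split $G_w$ into an outer integral over $\Gamma_v$ and inner integrals defining $H_\xi$. The paper carries this out on the rectifiable set $\Gamma$ via the Lipschitz projection $p_v$, citing Federer 3.2.22 both for the coarea identity and for the tangent-space relation $T_\xi\Gamma_v=p_v(T_{(x,\xi)}\Gamma)=\mathrm{Im}(Dw(x))$; you instead pull back to $\Omega$ by the area formula and phrase everything in terms of a coarea formula for $w$ and $J_kw$. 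One genuine difference in execution: before applying coarea, the paper inserts an intermediate step showing $\int_\Gamma\langle\phi\,d\xi^\beta\wedge dx^\alpha,\tau\rangle\,d\calH^n=\int_{p_v^{-1}\Gamma_v}(\cdots)$ for $|\beta|=k$, obtained by slicing $G_w$ along $\xi^{\beta_1},\dots,\xi^{\beta_k}$ and invoking Solomon's Separation Lemma to show each slice is carried by a single fibre of $p_v$. Your route bypasses this, since the factor $J_kw$ already kills the integrand outside $\Omega^k$ and the outer coarea integral then automatically restricts to $\Gamma_v$; this is a legitimate shortcut for \eqref{Gw.decomp}. The one place your sketch is thinner than the paper is the assertion that ``$\tau_v$ depends only on $\xi$'': the Lipschitz-decomposition argument you give does not by itself show that different Lipschitz pieces mapping onto overlapping parts of $\Gamma_v$ produce the \emph{same} tangent space $\calH^k$-a.e., and this is precisely what the paper extracts from Federer 3.2.22(1) applied to $p_v$ on $\Gamma$. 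Once that citation is made explicit, your argument closes.
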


\begin{proof}[Proof of Proposition \ref{P.weakfol} and Lemma \ref{L.weakfol}]
{\bf 1}. Given that $\Gamma$ is rectifiable, see Lemma \ref{L.gms},
the measurability and rectifiability of $\Gamma_v$ are immediate consequences of \cite{federer} 3.2.31, and
then the {\em a.e.} measurability and rectifiability of $\Gamma_h(\xi)$ follow directly from \cite{federer} 3.2.22(2). 

Next, the coarea formula 
 \cite{federer} 3.2.22(3) states that for any $\calH^n\rest \Gamma$-integrable function $g$, 
\[
\int_\Gamma g \ J_k p_v \ d\calH^n \ = \ \int_{\Gamma_v}\left( \int_{ p_v^{-1}\{\xi\}} g \ d\calH^{n-k} \right) d\calH^k.
\]
It follows that
\begin{equation}
J_{k} p_v(x,\xi) >0 
\ \ \quad\mbox{ $\calH^{n-k}$ a.e. in $\Gamma_h(\xi)$, 
for $\calH^k$ a.e. $\xi\in \Gamma_v$.}
\label{Jp_ae1}\end{equation}
Moreover,
\begin{equation}
\quad T_\xi\Gamma_v = p_v(T_{(x,\xi)}\Gamma) = \mbox{Im}(Dw(x)),
% \mbox{span} \{ p_v\tilde \tau_i\}_{i=1}^k  \qquad  
\quad\mbox{ $\calH^{n-k}$ a.e. in $\Gamma_h(\xi)$,
for $\calH^k$ a.e. $\xi\in \Gamma_v$,}
\label{Jp_ae2}\end{equation}
using \cite{federer} 3.2.22(1) for the first equality,  and \eqref{aptx} for the second.

{\bf 2}. Let $\tau_v:\Gamma_v\to\Lambda_k\R^\ell_v$ be any fixed measurable
unit simple $k$-vectorfield that orients $T_\xi\Gamma_v$ a.e..
We will construct 
$\calH^n$-measurable $\tau_h:p_v^{-1}(\Gamma_v)\to \Lambda_{n-k} (\R^n_h)$ 
characterized (up to null sets) by the identity
\begin{equation}
\langle d\xi^\beta\wedge dx^\alpha, \tau(x,\xi)\rangle =
J_k p_v(x, \xi) \ \langle d\xi^\beta, \tau_v(\xi)\rangle \ 
\langle dx^\alpha, \tau_h(x,\xi)\rangle  
\label{tauh.char}\end{equation}
for all multiindices such that $|\beta| = n-|\alpha|=k$, where
$\tau$ was  defined in \eqref{Gw.rep}.
In fact, since $\tau_v$ and $\tau$ are measurable, this identity automaticaly the measurability of $\tau_h$.

To prove \eqref{tauh.char}, we fix some point $(x,\xi)\in p_v^{-1}\Gamma_v$
such that  rank$(Dw(x))=~k$ and \eqref{aptx} holds. These conditions hold
$\calH^n$ a.e. by \eqref{Jp_ae1} and Lemma \ref{L.gms}.
We will find $\tau_h$ by first selecting  a basis $\{ b_i\}_{i=1}^n$ for $\R^n_h$ with a number of good
properties, and then defining
\begin{equation}
\tau_i := DW(x) b_i, \ \ i=1,\ldots, n,\qquad\qquad \tau_h := \tau_{k+1}\wedge \ldots \wedge \tau_n.
\label{taui.def}\end{equation}
In view of \eqref{aptx},  any such $\{ \tau_i\}_{i=1}^n$ is a basis for
$T_{(x,\xi)}\Gamma$.
We choose $\{b_i\}$ to satisfy the following:
\begin{itemize}
\item  $\{ b_i\}_{i=k+1}^n$ are an orthonormal basis for $\ker(Dw(x))$.
\item $\{ b_i \}_{i=1}^k$ are orthogonal to $\ker(Dw(x))$, and are chosen so that $\{ \tau_i\}_{i=1}^k$
are orthonormal. 
\item $b_1,\ldots, b_k$ are ordered so that  $Dw(x) b_1 \wedge \ldots \wedge Dw(x) b_k$
is a positive multiple of $\tau_v(\xi)$.
\item  $\{ b_1,\ldots, b_n\}$ is
positively oriented with respect to the standard basis $\{ e_1,\ldots, e_n\}$.
\end{itemize}

The first two conditions can be satisfied since rank$(Dw(x))=k$.
The third condition can be achieved due to
 \eqref{Jpos_ae}, by changing the sign of $b_1$ if necessary.  
Having fixed $\{b_1,\ldots, b_k \}$, we can adjust the sign of $b_{k+1}$
to arrange the final condition.

We now verify \eqref{tauh.char}.
Note that  $\tau_i = DW(x) b_i = (b_i, Dw(x) b_i) \in \R^n_h\times \R^\ell_v$.
It follows that $\tau_i = (b_i,0)$ for $i > k$, and hence that $\{\tau_i\}_{i=1}^n$
are orthonormal. This and the ordering
of $\{b_1,\ldots, b_n\}$ imply that 
$\tau_1\wedge\ldots \wedge \tau_n =  \tau(x,\xi)$.

Also, it is a fact that $J_kp_v = |p_v\tau_1\wedge \ldots \wedge p_v \tau_k| $; this 
is a straightforward consequence of the defintion of the Jacobian.
Since $|\tau_v(\xi)|=1$ and $p_v\tau_i = Dw(x)b_i$, 
the ordering of $b_1,\ldots, b_k$ implies that
\[
\tau_v(\xi) = \frac{ p_v\tau_1\wedge \ldots\wedge p_v\tau_k }{ |p_v\tau_1\wedge \ldots\wedge p_v\tau_k|}
= \frac { p_v\tau_1\wedge \ldots\wedge p_v\tau_k} {J_k p_v(x,\xi)}.
\]
Since $p_v\tau_i = 0$ for $i>k$, it follows that
\begin{align*}
\tau(x,\xi) &=
\tau_1\wedge\ldots \wedge  \tau_n \\
&=
(p_h\tau_1+ p_v\tau_1)\wedge\ldots \wedge (p_h\tau_k+ p_v\tau_k)\wedge \tau_h
\\
&=
J_k p_v(x,\xi) \ \tau_v \wedge \tau_h
+ \mbox{(terms involving at most $k-1$ vertical vectors)}.
\end{align*}
Then the claim \eqref{tauh.char} follows by letting $d\xi^\beta\wedge dx^\alpha$ act by duality on both
sides of the above expression, since
\[
\langle d\xi^\beta \wedge dx^\alpha , \mbox{terms involving at most $k-1$ vertical vectors} \rangle = 0.
\]

{\bf 3}. We will now show that  if $|\beta| = n-|\alpha|\ge k$, then
%First, it follows from \cite{J2010}, Lem1ma 3.2 (which applies here as a result of \eqref{ww1kplus1} and \eqref{MA1})
\begin{equation}
\int_\Gamma \langle\, \phi \, d\xi^\beta\wedge dx^\alpha, \tau\rangle \  d\calH^n \ =
\int_{p_v^{-1}\Gamma_v} \langle\, \phi \, d\xi^\beta\wedge dx^\alpha, \tau\rangle  \ d\calH^n\ 
\qquad\mbox{ for  $\phi\in C^\infty_c(\Omega\times \R^n)$.}
\label{oldLem3.2}\end{equation}
This is clear  if $|\beta|= n-|\alpha| >k$, in which case both sides vanish.
For $|\beta|=k$, this follows from a classical argument, dating
back at least to  Fu \cite{fu1}, which we recall for the convenience of
the reader. First, we rewrite the left-hand side in terms of slices 
$\langle G_w, q_\beta,  \cdot \rangle$ of 
$G_w$ by level sets of $q_\beta$, where  $q_\beta(x,\xi) = (\xi^{\beta_1},\ldots, \xi^{\beta_k})\in \R^k$.
This leads to
\begin{equation}
\int_\Gamma \langle\, \phi \, d\xi^\beta\wedge dx^\alpha, \tau\rangle d\calH^n \ 
=  \ G_w( d\xi^\beta\wedge \phi\, dx^\alpha) \ 
 = \  \int_{\R^k} \langle G_w, q_\beta, y\rangle(\phi \, dx^\alpha) \ dy.
\label{ssl1}\end{equation}
Fix some   $i\in \{1,\ldots, \ell\}$. We will write 
 $q_i(x,\xi) = \xi^i$ and $q_{\beta, i}(x, \xi) = (q_\beta(\xi), \xi^i)\in \R^{k+1}$.
We claim that 
\begin{equation}
\Big\langle \langle G_w, q_\beta, y\rangle , q_i , s\Big \rangle = 0\qquad\quad\mbox{ for 
 {\em a.e. }}(y,s)\in \R^k\times \R.
\label{ssl0}\end{equation}
To see this, note that that for $\calL^{k+1}$ a.e. $(y,s)\in \R^{k}\times \R$,
\[
\Big\langle \langle G_w, q_\beta, y\rangle , q_i , s\Big \rangle
=
\langle G_w, q_{\beta,i}, (y,s)\rangle 
\]
(see \cite{federer} 4.3.5). Then basic properties of
slicing imply  that for any $\psi\in \calD^{n-k-1}(\Omega\times \R^\ell_v)$
and $\chi \in C^\infty_c(\R^k\times\R)$,
\[
\int_{\R^k\times \R} \langle G_w, q_{\beta,i}, (y,s)\rangle (\psi) \ \chi(y,s) \ dy\ ds
=
G_w( \chi\circ q_{\beta,i} \ d\xi^\beta \wedge d\xi^i \wedge \psi)  \overset{\eqref{nulll}} = 0.
\]
It follows that for every $\psi$ as above, 
\[
\Big\langle \langle G_w, q_\beta, y\rangle , q_i , s\Big \rangle(\psi)  = 0
\qquad\mbox{ for  {\em a.e. }}(y,s)\in \R^k\times \R.
\]
Then \eqref{ssl0} follows by considering a countable dense subset of $\calD^{n-k-1}(\Omega\times \R^\ell_v)$.

Now according to 
 Solomon's Separation Lemma  (Lemma 3.3 of \cite{solomon}),
 it is a consequence of \eqref{ssl0} that for $\calL^k$ a.e. $y$, every
indecomposable component of  $\langle G_w, q_\beta, y\rangle$  is carried by a level set of $q_i$.
Since this holds for all $i$, we infer that for a.e $y$, every indecomposable
component of $\langle G_w, q_\beta, y\rangle $ is carried by $p_v^{-1}\{\xi\}$
for some $\xi\in \R^\ell$. 
From general properties of slicing, each such indecomposable
component can be represented by integration with respect to $\calH^{n-k}$
over $p_v^{-1}\{\xi\}$. In particular, for each such indecomposable
component, $\calH^{n-k}(p_v^{-1}\{\xi\}) >0$, so
$\xi \in \Gamma_v$. Hence  $\langle G_w, q_\beta, y\rangle$ is carried by $p_v^{-1}\Gamma_v$.
We combine this fact with \eqref{ssl1} to deduce \eqref{oldLem3.2}.

 {\bf 4}. We now prove \eqref{Gw.decomp}.  
Thus, for  $\beta \in I(k,\ell),  \psi\in \calD^{n-k}(\Omega\times \R^\ell_v)$
and $\chi \in C^\infty(\R^\ell_v)$, we find from \eqref{Gw.rep}, \eqref{tauh.char}, \eqref{oldLem3.2}
and the coarea formula \cite{federer} 3.2.22 that
\begin{align*}
G_w( \chi \, d\xi^\beta\wedge  \psi ) 
&= \int_{p_v^{-1}\Gamma_v}  \langle d\xi^\beta\wedge  \psi, \tau \rangle \chi \, d\calH^n \\
&=
\int_{p_v^{-1}\Gamma_v} \langle\psi, \tau_h(x,\xi)   \rangle\langle d\xi^\beta, \tau_v(\xi)  \rangle  J_k p_v(x,\xi)
\chi(\xi) \, d\calH^n \\
&= \int_{\Gamma_v} 
 \left(\int_{ p_v^{-1}\{\xi\}} \langle \psi,  \tau_h \rangle d\calH^{n-k}\right)
 \langle d\xi^\beta, \tau_v \rangle \ \chi  \   d\calH^k
\end{align*}
This is \eqref{Gw.decomp}. 

{\bf 5}. Since $\partial G_w=0$ in $\Omega\times \R^\ell$, it follows from \eqref{Gw.decomp} that
\[
\int_{\Gamma_v} \partial H_\xi(\psi) \ \langle d\xi^\beta, \tau_v \rangle \chi(\xi) \, \calH^k = 0
\]
for all $\psi \in \calD^{n-k}(\Omega\times \R^\ell), \chi\in C^\infty(\R^\ell)$, and 
$\beta \in I(k,\ell)$. For every such $\psi$, ii follows that $\calH_\xi(\psi) = 0 $ for $\calH^k$
a.e. $\xi\in \Gamma_v$. By considering a countable dense subset of $\calD^{n-k}(\Omega\times \R^\ell)$, we conclude that 
\begin{equation}
\partial H_\xi = 0 \ \  \mbox{ in $\Omega\times \R^\ell$},  \qquad\mbox{ for } 
\calH^k \ \  a.e.  \ \ \xi\in \Gamma_v.
\label{slice.bdy}\end{equation}
This in turn implies that for $\calH^k$ a.e. $\xi\in \Gamma_v$,  $\tau_h(x,\xi)$ orients the approximate tangent space 
at $(x,\xi)$
to the rectifiable set $\{\xi\}\times \Gamma_h(\xi)$ for
$\calH^{n-k}$ a.e. $x\in \Gamma_h(\xi)$. 
Projecting this statement onto the horizontal component, and recalling 
and the choice of $\{\tau_i\}$ in Step 1 above, we deduce that
\[
T_x\Gamma_h(\xi) = \mbox{span} \{ p_h\tau_i\}_{i=k+1}^n = \mbox{span} \{ b_i\}_{i=k+1}^n  =  \ker(Dw(x)) .
\]
This completes the proof of  \eqref{Jpos_ae}, recalling that we have already verified
\eqref{Jp_ae2}.
%, and the other half was already proved in \eqref{Jpos_ae}.

{\bf 6}. Finally, comparing \eqref{Gw.def} and \eqref{Gw.decomp}, 
\[
\int_{\Lambda_w} \phi(x, w(x)) M^\beta_{\bar\alpha}(Dw) \ dx
= \pm\int_{\Gamma_v} 
 \left(\int_{ \{\xi\} \times \Gamma_h(\xi)}  \phi(x,\xi) \langle dx^\alpha,  \tau_h \rangle d\calH^{n-k}\right)
 \langle d\xi^\beta, \tau_v \rangle   \   d\calH^k
\]
if $|\beta|=n-|\alpha| = k$, for $\phi\in C^\infty_c(\Omega\times \R^\ell)$.
By an approximation argument, this also holds for
$\phi\in L^\infty(\Omega\times \R^\ell)$ with compact support.
Also, we may replace $\Gamma_v$ by $\Gamma_v^*$,  defined in \eqref{Gvstar.def}, since it follows from what we have
already proved that the latter has full $\calH^k$ measure in $\Gamma_v$.
We deduce that for any compact set $K\subset \Omega\times \R^\ell$, if we define
\[
\Omega^k_{\alpha,\beta, K} := 
\{ x\in \Lambda_w :  (x, w(x))\in K, \ M^\beta_{\bar \alpha}(Dw(x))  \ne 0 \}
\]
then
\[
\calL^n \left(\Omega^k_{\alpha,\beta, K} \setminus \cup_{\xi \in \Gamma_v^* }\Gamma_h(\xi)
\right) = 0 .
\]
Since 
\[
\Omega^k = \bigcup_{{|\beta|  \ = n-  |\alpha|=k} }  \ \  \bigcup_{ K\mbox{\scriptsize compact}}\Omega^k_{\alpha,\beta, K} \ , 
\]
and indeed this can be written as a countable union via a suitable sequence
of compact sets $\{K_j\}_{j=1}^\infty$, this implies \eqref{weakfoliate}.
\end{proof}

\section{Dense weak flat foliation} \label{fullrank}

The main result of this section is the following.

\begin{proposition}
Assume that $\Omega$ is a bounded, open subset of $\R^n$, and that
%$w:\Omega\to \R^\ell$ satisfies
\beq
w\in W^{1,k+1}_{loc}(\Omega), \quad\quad
\mbox{rank}\,(D w) \le k \ \ \mbox{a.e.}
\label{w2kplus1}\eeq
for some $k\in \{ 1,\ldots, n-1\}$, 
and
\begin{equation}
w = (Du^1,\ldots, Du^q) \mbox{ for some }q\ge 1.
\label{gradient}\end{equation}
Then $w$ is densely weakly $(n-k)$-flatly foliated.
\label{P.dwff}\end{proposition}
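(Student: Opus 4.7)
The plan is to make rigorous the formal argument sketched in Section~\ref{background} using Proposition~\ref{P.weakfol}, and then to iterate the rank condition downward on the complements of successive closures.

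First I would exploit the gradient hypothesis. Proposition~\ref{P.weakfol} yields a set $\Gamma_v^* \subset \Gamma_v$ of full $\calH^k$ measure such that for each $\xi \in \Gamma_v^*$, the set $\Gamma_h(\xi)$ is $(n-k)$-rectifiable, the integer $(n-k)$-current $H_\xi$ defined in \eqref{Hxi.def} satisfies $\partial H_\xi = 0$ in $\Omega \times \R^\ell$, and for $\calH^{n-k}$ a.e.\ $x \in \Gamma_h(\xi)$ both $\mbox{Im}(Dw(x)) = T_\xi\Gamma_v$ and $T_x\Gamma_h(\xi) = \ker(Dw(x))$ hold. Let $P_i : \R^{nq} \to \R^n$ denote projection onto the $i$th $\R^n$-block. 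By \eqref{gradient}, at every Lebesgue point of $Dw$ we have $P_i \circ Dw(x) = D^2 u^i(x)$, which is a symmetric $n \times n$ matrix, so $\ker(D^2 u^i(x)) = [P_i \, \mbox{Im}(Dw(x))]^\perp$. Intersecting over $i$,
\[
T_x\Gamma_h(\xi) \, = \, \ker(Dw(x)) \, = \, \bigcap_{i=1}^{q} \bigl[ P_i T_\xi\Gamma_v \bigr]^{\perp} \, =: \, V(\xi),
\]
a subspace of $\R^n$ depending only on $\xi$, not on $x \in \Gamma_h(\xi)$.

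Next I would promote the constancy of the approximate tangent plane to honest planarity of $\Gamma_h(\xi)$. For $\xi \in \Gamma_v^*$, $H_\xi$ is an integer $(n-k)$-rectifiable current in $\{\xi\} \times \Omega$, with no boundary there, and with approximate tangent $(n-k)$-plane $\calH^{n-k}$ a.e.\ equal to the fixed direction $V(\xi) \times \{0\}$. Foliating $\{\xi\} \times \Omega$ by the affine $(n-k)$-planes parallel to $V(\xi)$ and slicing, the constancy theorem (Federer 4.1.31) forces the slice of $H_\xi$ by each such affine plane to be an integer multiple of (the intersection of that plane with) $\{\xi\} \times \Omega$. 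Hence $H_\xi$ is a locally finite integer sum of $(n-k)$-planes parallel to $V(\xi)$, and consequently $\Gamma_h(\xi)$ coincides, up to an $\calH^{n-k}$-null set, with a union of $(n-k)$-planes in $\Omega$. On each such plane $P$, $w \equiv \xi$ holds $\calH^{n-k}$ a.e.\ by the very definition of $\Gamma_h(\xi)$.

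Finally I would build the dense weak foliation by downward induction on the rank. Set $\Omega_k := \Omega$, let $\Omega^k$ be as in \eqref{Omegak.def}, and define
\[
F_k := \overline{\Omega^k} \cap \Omega_k, \qquad \Omega_{k-1} := \Omega_k \setminus F_k,
\]
so $\Omega_{k-1}$ is open in $\Omega$. By Steps~1--2 and \eqref{weakfoliate}, the set of $x \in \Omega^k$ through which passes an $(n-k)$-plane of $\Omega$ on which $w$ is $\calH^{n-k}$ a.e.\ constant has full $\calL^n$ measure in $\Omega^k$, hence is dense in $F_k = \overline{\Omega^k}$; replacing each such plane by the connected component of its affine span in the open set $\Omega_k$ containing the given point verifies \eqref{Fj3.2} for $j = k$. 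On $\Omega_{k-1}$ the hypothesis improves to $\mbox{rank}(Dw) \le k-1$ a.e.\ (since $\Omega^k \subset F_k$ is disjoint from $\Omega_{k-1}$), and the gradient structure \eqref{gradient} is preserved under restriction, so I would apply the same argument with $(\Omega,k)$ replaced by $(\Omega_{k-1},k-1)$ and iterate. At the final step $j = 0$, $Dw \equiv 0$ a.e.\ on $\Omega_0 = F_0$, so $w$ is $\calH^n$ a.e.\ constant on each connected component, which plays the role of the trivial $n$-plane required by \eqref{Fj3.2}.

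The main obstacle I expect is Step~2: the passage from constant approximate tangent plane plus vanishing boundary of $H_\xi$ to genuine planarity of $\Gamma_h(\xi)$. One must apply the constancy theorem slice-by-slice within the orthogonal family of affine $(n-k)$-planes parallel to $V(\xi)$ and check that the horizontal projection of the resulting planar pieces recovers $\Gamma_h(\xi)$ up to an $\calH^{n-k}$-null set. A subsidiary but necessary technical point is to verify that $\calL^n$ a.e.\ Lebesgue point of the matrix $Dw$ is also a Lebesgue point of each block $D^2 u^i$ at which that block is symmetric, so that the identity $\ker(D^2 u^i) = [\mbox{Im}(D^2 u^i)]^\perp$ is available on a set of full measure in $\calH^k$ a.e.\ slice $\Gamma_h(\xi)$.
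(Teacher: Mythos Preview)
Your overall architecture matches the paper's proof exactly: use Proposition~\ref{P.weakfol} and the symmetry of the Hessian blocks to show $T_x\Gamma_h(\xi)$ depends only on $\xi$, upgrade this to planarity of $\Gamma_h(\xi)$, then define $F_j,\Omega_j$ recursively and iterate. Two points, however, are not handled correctly.

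\medskip
\textbf{Step 2 is misformulated.} You cannot ``slice $H_\xi$ by an affine $(n-k)$-plane'' and land on an $(n-k)$-current to which the constancy theorem applies; slicing an $(n-k)$-current by a map to $\R^k$ produces $(n-2k)$-currents, and restricting $H_\xi$ to a single affine plane does not a priori preserve $\partial H_\xi=0$. The paper instead argues as follows: in coordinates with $V(\xi)=\mathrm{span}\{e_1,\dots,e_{n-k}\}$, the constancy of the tangent gives $H_\xi\rest dx^j=0$ for every $j>n-k$, hence $\langle H_\xi,x^j,t\rangle=0$ for a.e.\ $t$; Solomon's Separation Lemma then forces each indecomposable component of $H_\xi$ to be carried by a single level set of each $x^j$, i.e.\ by one affine $(n-k)$-plane parallel to $V(\xi)$. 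Only after this decomposition does the constancy theorem (applied inside that plane, where the component is a boundaryless $(n-k)$-current in an $(n-k)$-dimensional open set) yield that the component is the full $(n-k)$-plane in $\Omega$. You correctly flag this step as the main obstacle, but the mechanism you propose is not the one that works.

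\medskip
\textbf{The density claim needs an argument.} You write that the set of good points ``has full $\calL^n$ measure in $\Omega^k$, hence is dense in $F_k=\overline{\Omega^k}$.'' Since $\Omega^k$ is not open, full measure does not imply density without further input. The paper supplies it: for $x_0\in\Omega^j$ one has $\mbox{rank}(Dw(x_0))=j$ and $x_0$ is a Lebesgue point of $Dw$; by lower semicontinuity of rank there is $\delta_0>0$ with $\mbox{rank}(A)\ge j$ whenever $|A-Dw(x_0)|<\delta_0$, so every ball $B_r(x_0)\subset\Omega_j$ contains a positive-measure set of Lebesgue points with $\mbox{rank}(Dw)\ge j$, hence (since $\mbox{rank}\le j$ a.e.\ in $\Omega_j$) a positive-measure subset of $\Omega^j$. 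This shows every full-measure subset of $\Omega^j$ is dense in $\Omega^j$, and therefore in $F_j$.
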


This will be a straightforward consequence
of the following lemma, which gives a more detailed description
of $w$ in the set $\Omega^k$
in which $Dw$ has maximal rank $k$, see \eqref{Omegak.def}.

\begin{lemma}
Assume that $w$ satisfies the hypotheses of Proposition \ref{P.dwff}.

%and recall that we have defined
%\beq\label{Omegak.def2}
%\Omega^k := \{ x\in \Omega : \mbox{$x$ is a Lebesgue point of $w$ and $Dw$, and
%$\mbox{rank}\,(Dw) = k $} \}.
%\eeq
Then for $\calL^n$ a.e. $x\in \Omega^k$,
$w^{-1}\{ w(x) \}$ coincides, up to a $\calH^{n-k}$ null set, 
with a countable union of $(n-k)$-planes in $\Omega$, all of them parallel to
$\ker(Dw(x))$.

In particular, for $\calL^n$ a.e. $x\in \Omega^k$,
$w$ is $\calH^{n-k}$ {\em a.e.} constant on the
$n-k$-plane in $\Omega$
that passes through $x$ and whose tangent space
is $\ker(D w(x))$.
\label{P.weakdev}\end{lemma}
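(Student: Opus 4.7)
The plan is to combine Proposition \ref{P.weakfol} with the gradient structure \eqref{gradient} so that the rectifiable fibers $\Gamma_h(\xi)$ have \emph{constant} approximate tangent space (depending only on $\xi$), and then to invoke a constancy/planarity argument for integer-rectifiable boundaryless currents to conclude that each $\Gamma_h(\xi)$ is, up to $\calH^{n-k}$-null sets, a countable union of parallel $(n-k)$-planes in $\Omega$.

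First, Proposition \ref{P.weakfol} guarantees that for $\calL^n$-a.e.\,$x \in \Omega^k$, the point $\xi := w(x)$ lies in $\Gamma_v^*$, so that $T_x \Gamma_h(\xi) = \ker(Dw(x))$ while $T_\xi \Gamma_v = \mbox{Im}(Dw(x))$. The hypothesis \eqref{gradient} lets us decompose $Dw(x)$ into $q$ symmetric blocks $D^2 u^1(x), \ldots, D^2 u^q(x)$. Letting $\Pi_i : \R^\ell = (\R^n)^q \to \R^n$ denote orthogonal projection onto the $i$-th factor, so that $\Pi_i \mbox{Im}(Dw(x)) = \mbox{Im}(D^2 u^i(x))$, and using the symmetry identity $\ker(D^2 u^i(x)) = [\mbox{Im}(D^2 u^i(x))]^\perp$, I would obtain
\[
\ker(Dw(x)) \ = \ \bigcap_{i=1}^q \ker(D^2 u^i(x)) \ = \ \bigcap_{i=1}^q [\Pi_i \mbox{Im}(Dw(x))]^\perp \ = \ \bigcap_{i=1}^q [\Pi_i T_\xi \Gamma_v]^\perp,
\]
so that this subspace depends only on $\xi$. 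Call it $V(\xi)$.  Consequently, for $\calH^k$-a.e.\,$\xi \in \Gamma_v^*$, the tangent space $T_{x'}\Gamma_h(\xi)$ equals the constant subspace $V(\xi)$ at $\calH^{n-k}$-a.e.\,$x' \in \Gamma_h(\xi)$.

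Next, Proposition \ref{P.weakfol} tells us that $H_\xi$ is an integer-rectifiable $(n-k)$-current in $\Omega \times \R^\ell$ with $\partial H_\xi = 0$, supported on $\{\xi\} \times \Gamma_h(\xi)$, and whose orienting $(n-k)$-vector equals a.e.\,a fixed unit simple multivector orienting $V(\xi)$. A constancy-type argument in geometric measure theory then yields that $H_\xi$ is a locally finite integer sum of currents of integration over sets of the form $\{\xi\}\times P_\alpha$, where $\{P_\alpha\}$ is a countable family of $(n-k)$-planes in $\Omega$, each parallel to $V(\xi)$. The idea behind this step is to use the orthogonal projection $q: \Omega \to V(\xi)^\perp$: since $V(\xi) \subset \ker(dq)$ and $\mbox{spt}(H_\xi)$ has tangent plane $V(\xi)$ a.e., slicing $H_\xi$ by $q$ detects that the support is concentrated on a countable union of affine translates of $V(\xi)$; on each such translate, Federer's constancy theorem \cite{federer}, together with $\partial H_\xi = 0$, identifies the restriction of $H_\xi$ with an integer multiple of the current of integration over a connected component of that translate intersected with $\Omega$.

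The main obstacle is this last constancy/planarity step; it is where the integer-multiplicity and vanishing-boundary structure of $H_\xi$ are essential, and it is a variant of arguments in \cite{J2010} tailored to the setting of Monge--Amp\`ere functions. The remaining steps are routine bookkeeping: once the decomposition of $H_\xi$ is in hand, $\Gamma_h(\xi)$ coincides $\calH^{n-k}$-a.e.\,with $\bigcup_\alpha P_\alpha$, and combining this with the covering statement \eqref{weakfoliate} yields the claim that for $\calL^n$-a.e.\,$x \in \Omega^k$, $w^{-1}\{w(x)\}$ coincides, up to an $\calH^{n-k}$-null set, with such a countable union of $(n-k)$-planes parallel to $\ker(Dw(x))$, of which precisely one contains $x$ and has tangent space $\ker(Dw(x))$.
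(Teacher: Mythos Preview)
Your proposal is correct and follows essentially the same route as the paper: use Proposition \ref{P.weakfol} plus the symmetry of the Hessian blocks to show $T_x\Gamma_h(\xi)$ depends only on $\xi$, then exploit $\partial H_\xi=0$ together with the constancy of the tangent plane to force each $\Gamma_h(\xi)$ to be a union of parallel $(n-k)$-planes, and finally invoke \eqref{weakfoliate}. The only cosmetic difference is that where you sketch the planarity step via slicing by the orthogonal projection onto $V(\xi)^\perp$ and Federer's constancy theorem, the paper phrases it as applying Solomon's Separation Lemma coordinate-by-coordinate (after rotating so that $V(\xi)=\mathrm{span}\{e_1,\dots,e_{n-k}\}$, one has $H_\xi(\phi\wedge dx^j)=0$ for $j>n-k$, hence each indecomposable component of $H_\xi$ lies in a level set of $x^j$); these are the same argument under different names.
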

 
This is essentially proved in \cite{J2010} in the case $k=1, n=2$.

%The proof will also establish measurability properties  of the map $\xi\mapsto \calP(\xi)$ 

Note that for $w\in W^{1,k+1}_{loc}$, 
the set of points that fail to be Lebesgue points of $w$ has dimension
less than $n-k$, as discussed in Section \ref{background},
so the conclusions of the proposition make sense.

The proof of Lemma uses the geometric measure theory
results of the previous section to give a rigorous version
of the formal argument sketched in the introduction.
It is the only point in this paper at which we use the gradient
structure \eqref{gradient} of $w$.

In the proof we will identify $\R^n_h$ and $\R^\ell_v$ via the natural
isomorphism $e_i \leftrightarrow \ep_i$.

\begin{proof}[Proof of Lemma \ref{P.weakdev}]
{\bf 1}. We  fix  $\xi\in \Gamma_v^*$, defined in \eqref{Gvstar.def}, and we  first claim that
\beq
T_x\Gamma_h(\xi)  %\overset{\eqref{Jpos_ae}} = \ker(Dw(x))
\mbox{ is $\calH^{n-k}$ a.e. constant for $x\in \Gamma_h(\xi)$.}
\label{tsconstant}\eeq
%The first equality above holds by \eqref{Jpos_ae}, which is a.e. satisfied in $\Gamma_h(\xi)$ by \eqref{Gvstar.def}. 
Indeed, since $ D^2 u^i(x)$ is symmetric for every $i$,
 at $\calH^{n-k}$ a.e. $x\in \Gamma_h(\xi)$ we have
\begin{align*}
T_x\Gamma_h(\xi) 
\overset{\eqref{Jpos_ae}}  
=  \ker(D w(x))
\overset{\eqref{gradient} }= 
\cap_{i=1}^{q} \ker(D^2 u^i(x)) 
= 
\cap_{i=1}^{q}  [ \mbox{Im}(D^2 u^i(x))]^\perp.
\end{align*}
Moreover, if we write $P^i: (\R^n)^q\to \R^n$ to denote 
orthonormal projection of $\R^{n q} = (\R^n)^q$ onto 
the $i$th copy of $\R^n$, then $D^2 u^i(x) = P^i\circ Dw(x)$.
Thus
\[
\mbox{Im}(D^2 u^i(x))
= 
\mbox{Im} P^i\circ Dw(x) = P^i( Im(Dw_i)) \overset{\eqref{Jpos_ae}} = 
P^i( T_\xi\Gamma_v).
\]
The term on the right depends only on $\xi$, so \eqref{tsconstant} follows from
the
previous two identities.

{\bf 2}. 
For $\xi\in \Gamma_v^*$, we will write $T(\xi) := \cap_{i=1}^j [P^i(T_\xi\Gamma_v)]^\perp = T_x\Gamma_h(\xi)$ for a.e. $x \in \Gamma_h(\xi)$.
We next claim that 
\begin{equation}
\mbox{ if $\xi \in \Gamma_v^*$, then $\Gamma_h(\xi)$ is a union of $(n-k)$-planes in $\Omega$,
all  parallel to $T(\xi)$.} 
\label{planes}\end{equation}
Since the current $H_\xi$  from Proposition \ref{P.weakfol} is represented by integration over $\{\xi \}\times \Gamma_h(\xi)$, it suffices to show that
every indecomposable component of $H_\xi$ is supported on
exactly a set of the form
$\{\xi\}\times P$, where 
$P$ is an $(n-k)$-plane in $\Omega$ with tangent space $T(\xi)$.

This  follows from  \eqref{tsconstant} and the fact that $\partial H_\xi =0 $ in $\Omega\times\R^n$,
by classical arguments that we have already seen in 
the proof of Proposition \ref{L.weakfol}.
In detail,  by changing coordinates we may arrange that 
$T_{x}\Gamma_h(\xi) = \mbox{span}\{e_1,\ldots, e_{n-k}\}$ for a.e. 
$x\in \Gamma_h(\xi)$. 
Since $H_\xi$ is carried by $\{\xi\} \times \Gamma_h(\xi)$, it follows that for 
$H_\xi ( \phi \wedge df) = 0$ for every $n-k-1$-form $\phi$ with compact support
in $\Omega$, whenever $f$ has the form $f(x) = x^j$
for some $j\in \{n-k+1,\ldots, n\}$.
In this situation, Solomon's Separation Lemma  (Lemma 3.3 of \cite{solomon}) states that every
indecomposable component of $H_\xi$ is carried by a level set of $f$. It follows that
every indecomposable piece
of $H_\xi$ is contained in an   $n-k$ plane in which $x^j$ is constant
for all $j = n-k+1,\ldots, n$ (in the coordinates we have chosen, which depended on
$\xi$.)
%Then the Constancy Theorem implies that  $H_\xi$ has the form
described above. This completes the proof of \eqref{planes}.

{\bf 3}. Now the conclusions of the lemma follow directly from
\eqref{planes}, the definition \eqref{Gammah.def} of
$\Gamma_h(\xi)$, which implies in particular that $w$ is a.e. 
constant in each of these sets, and
\eqref{weakfoliate},  which asserts that $\cup_{\xi\in \Gamma_v^*} \Gamma_h(\xi)$
contains almost every point of $\Omega^k$.
\end{proof}

Having Lemma  \ref{P.weakdev}  at hand, the proof that $w$ is densely weakly flatly foliated
is straightforward. 

\begin{proof}[Proof of Proposition \ref{P.dwff} ]
{\bf 1}. We recall from Definition \ref{dWk-foliated} that the definition of 
densely weakly flatly foliated involves
a partition of $\Omega$ into sets $F_j$
such that  $\Omega_j := \cup_{m=0}^j F_m$ is open for every $j$, and satisfying
a property %\eqref{Fj3.2}, 
recalled in 
\eqref{dwff_j} below.
We define these sets  as follows. As before,
\[
\Omega^k := \{ x\in \Omega : \mbox{$x$ is a Lebesgue point of $w$ and $Dw$, and
$\mbox{rank}\,(D^2 u(x)) = k $} \}.
\]
We also let $\Omega_k = \Omega$, and for $j\in \{k-1, \ldots, 0\}$, we recursively
define (working downwards)
\begin{align}
\Omega_{j} &
= \Omega_{j+1} - \bar \Omega^{j+1}
\label{Omegaj1}\\
\Omega^{j} 
&=  \{ x\in  \Omega_{j} :
 \mbox{$x$ is a Lebesgue point of $Du$ and $D^2u$, and
$\mbox{rank}\,(D^2 u) = j $} \},
\label{Omegaj2}\end{align}
Finally, we set
\begin{equation}
F_j := \bar \Omega^j \cap \Omega_j \ = \ \Omega_j \setminus\Omega_{j-1} .
\label{Fj.def}\end{equation}
This indeed defines a partition of $\Omega$ such that every $\Omega_j$ is open, as required.

Note that by our convention $F_k= \bar{\Omega}^k$. 

We must show that  for every $j\in \{0,\ldots, k\}$,
\begin{multline} 
\mbox{ for every $x$ in a dense subset of $F_j$, there exists at least one $n-j$-plane $P$  in $\Omega_j$} \\
\mbox{such that $x\in P$ and 
$w$ is $\calH^{n-j}$ a.e. constant on $P$.}
\label{dwff_j}\end{multline}
Observe  for every $j \le k$, 
$\Omega_j$ is open, 
and $w\in W^{1, j+1}_{loc}(\Omega_j;\R^\ell)\subset W^{1,k+1}_{loc}(\Omega;\R^\ell)$, with rank$(Dw) \le j$
a.e. in $\Omega_j$. In other words, $w|_{\Omega_j}$ satisfies \eqref{w2kplus1} with
$k$ replaced by $j$, and hence Lemma \ref{P.weakdev} holds, with $k$ replaced by $j$
in $\Omega^j\subset\Omega_j$.
It follows that 
\begin{multline} 
\mbox{ for every $x$ in a full measure subset of $\Omega^j$, there exists at least one $n-j$-plane $P$  in $\Omega_j$} \\
\mbox{such that $x\in P$ and 
$w$ is $\calH^{n-j}$ a.e. constant on $P$.}
\label{fmwff_j}\end{multline}
Since $\Omega^j$ is manifestly dense in $F_j$, to deduce \eqref{dwff_j}
from \eqref{fmwff_j} it suffices to prove that
every full measure subset of $ \Omega^j$ is in fact dense in 
$\Omega^j$.

To see this, consider some $x_0\in \Omega^j$,
and 
fix $\delta>0$ such that 
rank$(A) \ge j$ for all matrices with $|A- Dw(x_0)|<\delta_0$.
Then for every $r>0$ such that $B_r(x_0)\subset \Omega_j$, since $x_0$
is a Lebesgue point of $w$ and $Dw$, the set
\[
\{x\in B_r(x_0) : x\mbox{ is a Lebesgue point of $w$ and $Dw$, and $|Dw(x)-Dw(x_0)|<\delta_0$} \}
\]
has positive measure. Since rank$(Dw)\le j$ a.e in $B_r(x_0)\subset \Omega_j$, 
the above set intersects $\Omega^j$ in a set of positive measure.
Since $x_0$ and $r$ were arbitrary, this completes the proof of \eqref{dwff_j}.
\end{proof}

\section{Pointwise weak developability } \label{S:3}

In this section we will prove the following statement, which is an important step in establishing Theorem 
\ref{T.degenerateMA}.

\begin{proposition}
Assume that
\beq
w\in W^{1,p}_{loc}(\Omega; \R^\ell), \quad\quad
\mbox{rank}\,(D w) \le k \ \ \mbox{a.e.}
\label{w2k+1b}\eeq
for some $k\in \{ 1,\ldots, n-1\}$ and some $p>k$. 
If  $w$ is densely weakly $(n-k)$-flatly foliated,
then $w$ is pointwise weakly $(n-k)$-flatly foliated.
\label{P.wd}\end{proposition}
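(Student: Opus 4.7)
My strategy is to upgrade dense foliation to pointwise foliation by a direct limiting argument: I take a sequence of ``good'' points approaching $x_0 \in F_j$, extract a limit $(n-j)$-plane via Grassmannian compactness, and transfer $\calH^{n-j}$-a.e. constancy from the approximating planes to the limit plane using a Fubini-plus-Sobolev-embedding argument in the $j$-dimensional complementary direction. The hard part will be this transfer of constancy, which hinges on the Sobolev embedding $W^{1,p}(U^j)\hookrightarrow C^0(U^j)$ (available precisely when $p>j$) and the compatibility of the precise representative of $w$ with continuous slice traces.

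Fix $j\in \{0,\ldots, k\}$ and $x_0 \in F_j$; I seek an $(n-j)$-plane $P$ in $\Omega_j$ through $x_0$ on which $w$ is $\calH^{n-j}$-a.e.\ constant. The case $j=0$ is immediate from connectedness: an $n$-plane in $\Omega_0$ is a connected component of $\Omega_0$, and density of points carrying the foliation property forces $w$ to be $\calL^n$-a.e.\ constant on each entire component. So assume $j\ge 1$; then $p>k\ge j\ge 1$, so $p>j$ strictly. By the density hypothesis, choose $x_m \to x_0$ in the dense subset of $F_j$, lying on $(n-j)$-planes $P_m \subset \Omega_j$ with directions $V_m$ on which $w\equiv \nu_m$ $\calH^{n-j}$-a.e. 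After passing to a subsequence, $V_m \to V$ in the Grassmannian of $(n-j)$-dimensional linear subspaces of $\R^n$. Set $\PP := x_0 + V$ and let $P$ be the connected component of $\PP \cap \Omega_j$ containing $x_0$; since $\Omega_j$ is open, $P$ contains a relative neighborhood of $x_0$ in $\PP$.

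I first show $w=\nu$ $\calH^{n-j}$-a.e.\ on a neighborhood of $x_0$ in $\PP$ for some constant $\nu$. Choose coordinates with $V=\R^{n-j}\times\{0\}$ and $x_0=0$, and for $m$ large express $V_m$ as the graph of a linear map $L_m : \R^{n-j}\to \R^j$ with $\|L_m\|\to 0$. Writing $x_m=(y_m,z_m)\to 0$ and setting $\psi_m(y):=L_m(y-y_m)+z_m$, the plane $P_m$ has the local parametrization $\{(y,\psi_m(y)) : y\in U^{n-j}\}$ on a product neighborhood $U=U^{n-j}\times U^j$ of $0$, and $\psi_m\to 0$ uniformly on $U^{n-j}$. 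Since $p>j$, Fubini gives $w(y,\cdot)\in W^{1,p}(U^j)$ for $\calL^{n-j}$-a.e.\ $y$, and by Sobolev embedding this slice has a continuous representative on $U^j$ coinciding with the precise representative of $w$ on $\{y\}\times U^j$ for $\calL^{n-j}$-a.e.\ such $y$. The hypothesis says $w(y,\psi_m(y))=\nu_m$ for $\calL^{n-j}$-a.e.\ $y$, for each $m$, and intersecting over $m\in \N$ preserves this on a single common a.e.\ set. For each $y$ in this set, continuity along $\{y\}\times U^j$ yields $w(y,\psi_m(y))\to w(y,0)$, so $\nu_m\to w(y,0)$. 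Since $\nu_m$ is independent of $y$, this simultaneously forces $\nu_m\to \nu$ for some constant $\nu$ and $w(y,0)=\nu$ for $\calL^{n-j}$-a.e.\ $y$.

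To extend the constancy to all of $P$, I use path connectedness and repeat the local argument. For any $y^*\in P$ choose a path $\gamma:[0,1]\to P$ from $x_0$ to $y^*$; its image is compact in $\Omega_j$. For $m$ large choose isometries $R_m:\PP\to \PP_m$ with $R_m(x_0)=x_m$ and $R_m\to \mbox{id}$ uniformly on compact sets, and set $\gamma_m:=R_m\circ\gamma$. For $m$ large $\gamma_m$ is a connected subset of $\PP_m\cap \Omega_j$ meeting $P_m$, hence $\gamma_m([0,1])\subset P_m$, so $y^*_m:=R_m(y^*)\in P_m$ and $y^*_m\to y^*$. Applying the local argument at $y^*$ with the same sequence of constants $\nu_m\to \nu$ shows $w=\nu$ $\calH^{n-j}$-a.e.\ on a relative neighborhood of $y^*$ in $\PP$. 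Since $y^*\in P$ was arbitrary, $w=\nu$ $\calH^{n-j}$-a.e.\ on $P$, completing the construction of the desired plane and proving pointwise weak $(n-k)$-flat foliation.
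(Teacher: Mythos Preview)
Your argument is correct and follows essentially the same route as the paper: Grassmannian compactness to extract a limit plane, then the Sobolev embedding $W^{1,p}(\R^j)\hookrightarrow C^0$ (valid since $p>k\ge j$) applied slice-by-slice to transfer the a.e.\ constancy from $P_m$ to the limit plane $P$, and connectedness of $P$ to globalize. The paper packages the slice argument as an $L^p$ integral estimate (its Lemma~\ref{L.improve1}), yielding $\int_{U}|w(y,0)-\nu_m|^p\,dy\le C\|w\|_{W^{1,p}}^p\|\psi_m\|_\infty^{(1-j/p)p}\to 0$ directly, whereas you argue pointwise via continuity of a.e.\ vertical slice; and the paper extends to all of $P$ by letting the connected neighborhood $U\ni x_0$ in $P$ be arbitrary, rather than via your path-and-isometry construction --- but these are cosmetic differences over the same idea.
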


\begin{remark} 
In view of Definition \ref{defi-develop}, we could say that Propositions \ref{isom.-is-degenerate},
\ref{P.dwff} and \ref{P.wd} 
together imply a pointwise weak developability result for $W^{2,k+1}(\Omega;\R^{n+k})$ isometric immersions, and
also for
such $u\in W^{2,k+1}$ such that
rank$(D^2 u) \le k$ a..e.
\end{remark}

The Proposition will follow from a couple of lemmas.

\begin{lemma}
Assume that $k,n$ are integers such that $1\le k < n$.
Let $U$ be an open subset of $\R^{n-k}$, and for $r>0$ let
$S := U\times B^{k}_r$
for some $r>0$.

Assume that $w\in W^{1,p}(S; \R^\ell)$ for some $p>k$,
and for $i=1,2$ let $\zeta_i:U\to B^k_s$ be continuous functions.
%, with  $|x'_i| \le R/2$ for all $|t|\le H$. 
Then (writing points in $S $ in the form $x = (y,z)$ with $y\in U, z\in B^{k}_s$)
\[
\left(\int_{ U}  |w(y, \zeta_1(y))  -  w(y ,\zeta_2(y))|^p  dy\right)^{1/p}
\le C \| w \|_{W^{1,p}(S )} \| \zeta_1 - \zeta_2\|_{L^\infty(U)}^\alpha
\]
for $\alpha = 1 - \frac{k}p$, for a  constant $C$ depending only on $k$ and $p$.
\label{L.improve1}\end{lemma}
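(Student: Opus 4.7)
The plan is to slice over the $y$-variable and apply Morrey's embedding on each vertical fiber $\{y\}\times B^k_r$, which is a $k$-dimensional ball; since the hypothesis $p>k$ is precisely the Morrey threshold in dimension $k$, this will give pointwise Hölder control in the vertical variable with exponent $\alpha=1-k/p$. Integrating back in $y$ and using the $L^\infty$ bound on $\zeta_1-\zeta_2$ will produce the stated inequality.

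In detail, first I would invoke Fubini's theorem to conclude that for $\calL^{n-k}$ a.e.\ $y\in U$ the vertical slice $z\mapsto w(y,z)$ lies in $W^{1,p}(B^k_r;\R^\ell)$, with
\[
\int_U \|\nabla_z w(y,\cdot)\|_{L^p(B^k_r)}^p \, dy \ \le \ \|w\|_{W^{1,p}(S)}^p.
\]
Since $p>k$, Morrey's inequality applied on the $k$-dimensional ball $B^k_r$ gives, for $\calL^{n-k}$ a.e.\ $y\in U$ and all $z_1,z_2\in B^k_s$,
\[
|w(y,z_1)-w(y,z_2)| \ \le \ C(k,p)\, |z_1-z_2|^\alpha\, \|\nabla_z w(y,\cdot)\|_{L^p(B^k_r)},
\]
where $w(y,\cdot)$ is understood in its precise Hölder-continuous representative.

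Next I would apply this with $z_i=\zeta_i(y)$, raise to the $p$-th power, pull out the factor $|\zeta_1(y)-\zeta_2(y)|^{\alpha p}\le \|\zeta_1-\zeta_2\|_{L^\infty(U)}^{\alpha p}$, and integrate in $y$; Fubini then bounds the remaining double integral by $\|w\|_{W^{1,p}(S)}^p$. Taking $p$-th roots gives the claim.

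The only subtle point is keeping the constant independent of $r$ (and of $U$). The cleanest route is to rescale $B^k_r$ to the unit ball and verify by a homogeneity check that the two factors of $r$ arising from the change of variables in the $L^p$ norm of $w$ and in $|z_1-z_2|^\alpha$ combine correctly; alternatively one may cite the sharper pointwise version of Morrey's inequality, in which the gradient is integrated only over $B(\zeta_1(y),2|\zeta_1(y)-\zeta_2(y)|)$, and note that this ball is contained in $B^k_r$ once $\zeta_1,\zeta_2\in B^k_s$ with $s$ small enough relative to $r$. This is essentially bookkeeping; the substantive content is simply the Morrey embedding on the vertical fibers combined with Fubini.
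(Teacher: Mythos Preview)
Your proposal is correct and is essentially identical to the paper's proof: both slice via Fubini, apply the $k$-dimensional Morrey embedding on each vertical fiber $\{y\}\times B^k_r$ to get H\"older control with exponent $\alpha=1-k/p$, and then integrate in $y$. The paper's argument is slightly more terse but makes the same point about the constant being independent of $r$.
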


\begin{proof}
We compute 
\begin{align*}
\| w \|_{W^{1,p}(S )}^p
& \ge
\int_{U} \| w (y, \cdot)\|_{W^{1,p}(B^k_r)}^p \ dy\\
&\ge
C^{-1}\int_{U}\frac{| w(y, \zeta_1(y)) - w(y,\zeta_2(y))|^p }{|\zeta_1(y) - \zeta_2(y)|^{\alpha p}} \ dy
\end{align*}
by the ($k$-dimensional) Sobolev Embedding, from which we also know that the constant
$C$ depends only on $p$ and $k$ and in particular is independent of $r$.
\end{proof}

Our next lemma will be used again in Section \ref{S:4}.

\begin{lemma} \label{closure} 
Assume that $\Omega$ is a bounded, open subset of $\R^n$, and that
$w \in W^{1,p}(\Omega, \R^\ell)$  for some $ p>j\in \{ 1,\ldots, n-1\}$ and some $\ell$. 

Assume also that $x_0\in \Omega$, and that there exists a sequence of points
$(x_m)\subset \Omega$ and values $(\xi_m)\in \R^\ell$
such that $x_m\to x_0$ as $m\to \infty$, and $w = \xi_m$ at $\calH^{n-j}$ a.e.
point on an $(n-j)$-plane $P_m$ in $\Omega$ that contains $x_m$.

Then exists, and $w=\lim_{m\to\infty}\xi_m$ at $\calH^{n-j}$ a.e. point on some $n-j$ plane $P$ in $\Omega$ that contains $x_0$. (In particular, $\lim_{m\to\infty}\xi_m$ exists.)
\end{lemma}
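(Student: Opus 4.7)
The plan is to identify $P$ as a subsequential Grassmannian limit of the planes $P_m$, and then apply Lemma~\ref{L.improve1} twice --- once to show that $(\xi_m)$ is Cauchy, once to pin $w$ down to the common limit on $P$. Since the $(n-j)$-directions of $P_m$ live in the compact Grassmannian $G(n-j,n)$, I would pass to a subsequence so that they converge, and since $x_m\in P_m$ with $x_m\to x_0$, the affine $(n-j)$-planes $\PP_m\supset P_m$ converge on compacts to some affine plane $\PP\ni x_0$. Take $P$ to be the connected component of $\PP\cap\Omega$ containing $x_0$. Choose orthonormal coordinates so that $\PP-x_0=\R^{n-j}\times\{0\}$, and fix $R,r>0$ small enough that the tube $S:=B^{n-j}_R\times B^j_r$ satisfies $x_0+S\Subset\Omega$. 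For all large $m$, $P_m$ appears in $x_0+S$ as the graph of an affine map $\zeta_m:B^{n-j}_R\to B^j_{r/2}$, and $\|\zeta_m\|_\infty\to 0$.

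Since $p>j$, Lemma~\ref{L.improve1} (applied with its ``$k$'' equal to our $j$, so that $\alpha=1-j/p>0$) would yield, for every $m,m'$ large,
\[
\int_{B^{n-j}_R}\bigl|w(x_0+(y,\zeta_m(y)))-w(x_0+(y,\zeta_{m'}(y)))\bigr|^p\,dy\;\le\;C\,\|w\|_{W^{1,p}(x_0+S)}^p\,\|\zeta_m-\zeta_{m'}\|_\infty^{\alpha p}.
\]
The affine parametrization $y\mapsto(y,\zeta_m(y))$ is bi-Lipschitz onto a subset of $P_m$, so the hypothesis $w=\xi_m$ $\calH^{n-j}$-a.e.\ on $P_m$ forces $w(x_0+(y,\zeta_m(y)))=\xi_m$ for $\calL^{n-j}$-a.e.\ $y\in B^{n-j}_R$. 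The left-hand side therefore reduces to $|\xi_m-\xi_{m'}|^p\,\calL^{n-j}(B^{n-j}_R)$, and this Cauchy estimate produces $\xi_\infty:=\lim_m\xi_m$. Applying the same lemma with $\zeta_2\equiv 0$ (whose graph is $P\cap(x_0+S)$) and sending $m\to\infty$ gives $w(x_0+(y,0))=\xi_\infty$ for $\calL^{n-j}$-a.e.\ $y\in B^{n-j}_R$, i.e., $w=\xi_\infty$ $\calH^{n-j}$-a.e.\ on the portion of $P$ inside $x_0+S$.

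To propagate this to all of $P$, I would cover any compact subset of $P$ by finitely many overlapping tubes of the same form; on each tube, for $m$ large, $P_m$ is again a graph close to $P$, and the same $L^p$-estimate forces $w=\xi_\infty$ $\calH^{n-j}$-a.e.\ on the corresponding local piece. The main (and really only) delicate point will be to ensure that the local graph piece of $P_m$ in a tube far from $x_0$ still lies in the specific connected component $P_m$, rather than in some other component of $\PP_m\cap\Omega$; this can be handled by chaining overlapping tubes between $x_m$ and the target point, using the local Hausdorff convergence $\PP_m\to\PP$ and the connectedness of $P$.
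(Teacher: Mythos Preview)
Your proposal is correct and follows essentially the same route as the paper's proof: Grassmannian compactness to extract a subsequential limit plane $\PP\ni x_0$, coordinates adapted to $\PP$, and Lemma~\ref{L.improve1} applied in a tube $S=U\times B^j_r$. The only cosmetic differences are that the paper applies Lemma~\ref{L.improve1} once (comparing $\zeta_m$ directly to $\zeta\equiv 0$, which simultaneously yields convergence of $\xi_m$ and the identification of $w$ on $U$) rather than twice, and handles the propagation/connected-component issue by taking $U\subset P$ to be an arbitrary connected relatively open set containing $x_0$ with $\bar U\subset\Omega$ --- so that $x_m\in S$ for large $m$ forces the connected graph $\PP_m\cap S$ to lie in the component $P_m$ --- which is exactly what your chaining of overlapping tubes accomplishes.
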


%\begin{lemma} \label{closure} 
%Assume that $\Omega$ is a bounded, open subset of $\R^n$, and that
%$w \in W^{1,j+1}(\Omega, \R^\ell)$  for some $j\in \{ 1,\ldots, n-1\}$. Assume that  for a subset $\Omega' \subset \Omega$ the following holds true:   
%For all $x\in \Omega'$,
% $v$ is $\calH^{n-j}$ {\em a.e.} constant on an
%$n-j$-plane in $\Omega$
%that passes through $x$. Then the same property is satisfied for $F'= \bar \Omega'$. 
%\end{lemma}

\begin{proof}
Let $\xi_m\in \R^\ell$ denote the value of $w$ on $\calH^{n-j}$ a.e. point of
$P_m$, and let $\PP_m$  denote the $(n-j)$-plane such that $P_m$ is a 
connected component of $\PP_m\cap \Omega$.

Since the Grassmannian of unoriented $(n-j)$-dimensional subspaces in $\R^n$
is compact,  we may  assume, after passing to subsequences (still
labelled $ (P_m), (\xi_m)$) that there is a $(n-j)$-plane $\PP$
passing through $x_0$ such that $\PP_m\to \PP$ in the Hausdorff distance
on $B_R(0)\subset\R^n$ as $m\to \infty$, for every $R>0$. Now let $P$ be the
$(n-j)$-plane in $\Omega$ consisting of the connected component of 
$\PP\cap \Omega$ that contains $x_0$.

We may arrange, after a translation and a rotation,
that $x_0= 0$
and $\PP = \R^{n-j}\times \{0\}$,
and we write $\R^n = \R^{n-j}_y\times \R^j_z$ as in Lemma \ref{L.improve1}. 
Fix a connected, relatively open set $U\subset P$, containing $x_0$ and having compact closure in
$\Omega$. Then there exists an open ball $B_r^j$ such that 
$S := U\times B_r^j \Subset \Omega$.  The convergence $\PP_m\to \PP$ implies that
for every sufficiently large $m$, there is an affine function
$\zeta_m:U \to B^j_r$
such that $\PP_m\cap S = \{ (y, \zeta_m(y)) : y\in U \}$,
and moreover that $\| \zeta_m\|_{L^\infty(U)} \to 0$
as $m\to \infty$.

Also, for $m$ large enough that $x_m\in S$, 
we have that $P_m\cap S$ is nonempty, and hence (since $S\subset \Omega$ is convex
and $P_m$ is a connected component of $\PP_m\cap \Omega$) that 
$\PP_m\cap S = P_m\cap S\subset P_m$. So $w=\xi_m$ $\calH^{n-j}$ a.e. in $\PP_m\cap S$, 
and
by applying  Lemma \ref{L.improve1} to $\zeta= 0$ and $\zeta_m$, we find that
\begin{align*}
\int_{U} |w(y,0) - \xi_m|^{p} dy\ &= \ \int_{U} |w(y,0) - w(y, \zeta_m(y))|^{p} dy\\
&\le C \| w\|^{p}_{W^{1,p}(S)} \| \zeta_m\|^{\alpha p}_{L^\infty(U)} \to 0\quad\mbox{as $m\to\infty$}, 
\end{align*} where $\alpha= 1 -\frac jp$. It follows that there exists some $\xi\in \R^\ell$ such that $\xi_m\to \xi$, and moreover
that $w(\cdot,0)= \xi$ a.e. on $U$. Since $U$ was arbitrary, 
it follows that $w= \xi$ at $\calH^{n-j}$ a.e. point of $P$. 

\end{proof}

Now we complete the

\begin{proof}[Proof of Proposition \ref{P.wd}]
By assumption, $\Omega$ is partitioned into sets $F_j$, $j=0,\ldots, n-k$
such that $\Omega_j := \cup_{m=0}^j F_m$ is open for every $j$,
and in addition, there is a dense subset of $F_j$ in which every point
is contained in a $n-j$-plane in $\Omega_j$ on which $w$ is $\calH^{n-j}$ a.e.
constant.

To prove the Proposition (with the same partition $(F_j)$ of $\Omega$),
it suffices to show that {\em every} point in $F_j$ 
is contained in a $n-j$-plane in $\Omega_j$ on which $w$ is $\calH^{n-j}$ a.e.
constant.
This follows directly from Lemma \ref{closure}, since every point in $F_j$
satisfies the hypotheses of the lemma, with $\Omega$ replaced by $\Omega_j$.
 \end{proof}

\begin{remark}
We note in passing that a slightly more careful version of
the above argument would prove the following statement: For {\em every} $x\in  \Omega^j$ as defined in \eqref{Omegaj2}, $w$ is $\calH^{n-j}$ {\em a.e.} constant on the
$n-j$-plane in $\Omega_j$ that passes through $x$ and whose tangent space
is $\ker(D w(x))$, and the constant value is equal to $w(x)$. 
 \end{remark} 
 %\textcolor{blue}{Before we had this sattement in a lemma. Now it's in a remark. Would you like it back as a statement?  The same goes for the fact that the weak developability can be established for a dense subset of $\Omega^k$ which we only show in  the last stage of proof} 

%Then from Proposition \ref{P.weakdev} we know
%that there exist sequences of $n-j$-planes $(P_m)_{m=1}^\infty$ and $(\xi_m)_{m=1}^\infty$
%such that  \eqref{exist1} holds, and in addition $\xi_m\to Du(x_0)$. It immediately follows from
%\eqref{exist2} that there exists an $n-j$-plane $P$ in $\Omega_j$ such that 
%$x_0\in P$ and $Du = Du(x_0)$ at $\calH^{n-k}$ a.e. point of $P$.

%Now let $x_0$ be an arbitrary point of $\bar\Omega^j \cap \Omega_j$. \textcolor{red}{By definition of $\Omega_j$, 
%and applying the Proposition \ref{P.weakdev} to $u|_{\Omega_j}$,  we get} 
%Let $x_m$ be a sequence of points in $\Omega^j$ such that $|x_m- x_0| < \frac 1m$.

%For each $m$, let $P_m$ be the $(n-j)$-plane in $\Omega_j$
%found in Step 2 above, passing through $x_m$, on which $Du = \xi_m := Du(x_m)$ a.e..
%Since this sequence satisfies \eqref{exist1}, it follows that \eqref{exist2} holds at $x_0$.
 
  %{\bf 4}. Finally, if $x_0\in \bar \Omega^j\cap \Omega_j$ is a Lebesgue point 
%of $Du$, then we may choose the sequence $(x_m)$ such that $\xi_m = Du(x_m)\to $

\section{Strong developability} \label{S:4}%Proof of Theorem  \ref{T.degenerateMA}}

In this section we prove the following

\begin{proposition}
Assume that $\Omega$ is an open subset of $\R^n$, and that
that $w\in W^{1,p}_{loc}(\Omega;\R^\ell)$ for some $p\ge \min\{2k, n\}$. If $w$ 
is pointwise weakly $(n-k)$-flatly foliated, then $w$ is continuous.
As a result, if $P$ is any $n-j$-plane in $\Omega_j$  (as in Definition \ref{Wk-foliated})  
on which $w$ is $\calH^{n-j}$ a.e. constant, then in fact $w$ is
constant on $P$. In particular, $w$ is $(n-k)$-flatly foliated. 
\label{P.sd}\end{proposition}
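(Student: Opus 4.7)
The plan is to follow the three-part roadmap sketched in Section \ref{background}: first establish a uniqueness statement for plane-values at a common point, then use it to deduce continuity of $w$ within each stratum $F_j$, and finally close the continuity across the internal boundaries $\partial \Omega_j \cap \Omega$ by invoking the $p$-quasicontinuity of $w$. The uniqueness statement is where the hypothesis $p \ge \min\{2k, n\}$ is actually used.

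The main technical step is the following. Fix $j \le k$ and $x_0 \in F_j$, and suppose $P_1, P_2$ are two distinct $(n-j)$-planes in $\Omega_j$ through $x_0$ on which $w$ is $\calH^{n-j}$-a.e.\ equal to $\xi_1$, $\xi_2$ respectively; I will show $\xi_1 = \xi_2$. Write $V_i$ for the direction of $P_i$, $V_0 := V_1 \cap V_2$, and $m := \dim V_0$; since $V_1 + V_2 \subseteq \R^n$ we have $m \ge \max(0, n - 2j)$. After translating $x_0 = 0$ and choosing coordinates $\R^n = \R^{n-j}_y \oplus \R^j_z$ with $V_0 \subset \R^{n-j}_y$ and $\R^j_z$ transverse to both $V_1, V_2$, each $P_i$ is locally the graph of an affine map $\zeta_i : \R^{n-j}_y \to \R^j_z$ with $\zeta_i(0) = 0$, and the difference $\zeta_1 - \zeta_2$ vanishes on $V_0$. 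Apply Lemma \ref{L.improve1} to the thin cylindrical base $U := B^{V_0}_{\rho_0} \times B^W_{\rho_1}$, where $W$ is a complement of $V_0$ inside $\R^{n-j}_y$, with $\rho_0$ fixed and $\rho_1 \to 0$. Using $\|\zeta_1 - \zeta_2\|_{L^\infty(U)} = O(\rho_1)$ and $|U| = c\,\rho_0^m\, \rho_1^{n-j-m}$, the lemma gives
\[
|\xi_1 - \xi_2|^p \;\le\; C\,\|w\|_{W^{1,p}(S)}^p\,\rho_0^{-m}\,\rho_1^{\,p-(n-m)},
\]
with $S = U \times B^j_{O(\rho_1)}$. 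Since $n - m \le \min(n, 2j) \le \min(n, 2k) \le p$, the $\rho_1$-exponent is nonnegative, and $\|w\|_{W^{1,p}(S)}^p \to 0$ as $|S| \to 0$ forces $\xi_1 = \xi_2$.

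This uniqueness unambiguously defines a value $\xi(x)$ on each $F_j$. Given $x_n \to x_0$ in $F_j$, Lemma \ref{closure} produces on a subsequence an $(n-j)$-plane $P^*$ through $x_0$ with $w = \lim \xi(x_n)$ a.e.; combined with the admissible plane $Q$ through $x_0$ from the pointwise foliation, the uniqueness step yields $\lim \xi(x_n) = \xi(x_0)$ (trivially if $P^* = Q$), so $\xi$ is continuous on $F_j$. A short approximate-differentiability argument at points of $\Omega^j$, where $\ker Dw(x) = T_x P$, shows in addition that $w = \xi$ holds $\calL^n$-a.e.\ on $F_j$. The delicate remaining configuration is $x_0 \in F_{j_0}$, $x_n \to x_0$ with $x_n \in F_{j_n}$, $j_n < j_0$: passing to a subsequence with $j_n = j'$ constant, Lemma \ref{closure} yields a plane $P^*$ of dimension $n - j'$ strictly larger than that of $Q$, so the uniqueness step does not apply. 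Here the $p$-quasicontinuity of (the precise representative of) $w$ is invoked. Since $p \ge \min\{2k,n\} > \max(j', j_0)$, both $P^*$ and $Q$ carry positive $p$-capacity. For every $x_0$ outside a $p$-capacity-zero exceptional set $E$, one selects a quasicontinuity open set $V_\varepsilon$ of small $p$-capacity avoiding $x_0$; then $P^* \setminus V_\varepsilon$ and $Q \setminus V_\varepsilon$ still carry positive $p$-capacity and contain approach sequences to $x_0$ on which $w$ equals $\xi^*$ and $\xi(x_0)$ respectively, and the continuity of $w|_{\Omega \setminus V_\varepsilon}$ at $x_0$ forces $\xi^* = \xi(x_0)$. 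Points of $F_{j_0} \cap E$ are handled by approximation from $F_{j_0} \setminus E$ (dense, since $\calH^{n-p+\varepsilon}(E) = 0$ for all $\varepsilon > 0$) together with the $F_{j_0}$-continuity already proved.

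With $\xi$ continuous on $\Omega$ and coinciding $\calL^n$-a.e.\ with $w$, one takes $\xi$ as the continuous representative of $w$; the $\calH^{n-j}$-a.e.\ constancy on each admissible plane $P$ then upgrades to actual constancy by density and continuity, which yields the $(n-k)$-flat foliation asserted. I expect the boundary step to be the main obstacle, as it requires a careful and slightly technical use of quasicontinuity exploiting facts established in the earlier steps; the dimension arithmetic in the uniqueness step is delicate but is pinned down by the identities $\alpha p - (n - j - m) = p - (n-m)$ and $n - m \le \min(n, 2j)$.
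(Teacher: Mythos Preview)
Your uniqueness step and the continuity of $\overline w$ on each $F_j$ are essentially the paper's Lemmas~\ref{L.no_sing_k} and~\ref{L.w1}; the variant via Lemma~\ref{L.improve1} instead of Sobolev on spheres is fine. Note also that, contrary to your claim, the uniqueness step \emph{does} extend to planes of unequal dimensions $n-j'$ and $n-j_0$ with $j',j_0 \le k$: the direction spaces intersect in dimension $m \ge \max(0,\, n - j' - j_0)$, so $n - m \le \min(n,\, j' + j_0) \le \min(n, 2k) \le p$, and your own computation goes through unchanged.

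The genuine gap is in the boundary step. First, your invocation of Lemma~\ref{closure} to produce $P^*$ is not justified: the $(n-j')$-planes $P_n$ are connected components of $\PP_n \cap \Omega_{j'}$, not of $\PP_n \cap \Omega$, and $x_0 \notin \Omega_{j'}$, so the lemma applies neither with domain $\Omega_{j'}$ (which excludes $x_0$) nor with domain $\Omega$ (where $w$ need not be a.e.\ constant on the larger component of $\PP_n \cap \Omega$). The paper resolves this by a dichotomy. In Case~1 there is $\delta > 0$ with $\PP_n \cap B_\delta(x_0) \subset \Omega_{j'}$ along a subsequence; then Lemma~\ref{closure} applies on $B_\delta(x_0)$, and the extended uniqueness step just mentioned finishes. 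In Case~2 the planes $P_n$ exit $\Omega_{j'}$ arbitrarily close to $x_0$, yielding points $y_n \in \bar P_n \cap F_{>j'}$ with $y_n \to x_0$, and one then needs the extension claim \eqref{wmainpoint}: $\overline w = \xi$ on an $(n-j')$-plane $P$ in $\Omega_{j'}$ implies $\overline w = \xi$ on $\bar P \cap F_{>j'}$. Second, your quasicontinuity argument does not establish this. Even with $x_0 \notin V_\varepsilon$, the open set $V_\varepsilon$ may swallow a punctured neighbourhood of $x_0$ within $P^*$ (an $(n-j')$-disk of radius $\delta$ has $p$-capacity of order $\delta^{\,n-p} \to 0$), so there is no reason $P^* \setminus V_\varepsilon$ supplies an approach sequence to $x_0$. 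The paper instead argues by projection: assuming a jump at $x_0 \in \bar P \cap F_i$, one shows that for every $y$ in an open $d$-ball $B_0$ with $d = \dim(\PP \cap \PP_0) \ge n - 2k + 1 > n - p$, the transverse slice through $y$ meets $\partial_\PP P$, where $w$ is essentially discontinuous. Any quasicontinuity set must therefore project onto all of $B_0$; since orthogonal projection does not increase $p$-capacity and $\mathrm{Cap}_p(B_0) > 0$ is a fixed positive number, this contradicts $\mathrm{Cap}_p(V_\varepsilon) \to 0$. This projection idea is what your argument is missing.
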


For the convenience of the reader, the proof will be split in a series of Lemmas which will follow and will be completed in Lemma \ref{L.w2}. 
This will complete the proof of Theorems \ref{T.isom_immersion} and \ref{T.degenerateMA},  which follow immediately from combining  Propositions \ref{P.dwff},  \ref{P.wd} and
\ref{P.sd} and, for Theorem \ref{T.isom_immersion} only, Proposition  \ref{isom.-is-degenerate} 
as well.

The following examples shows that the condition $p\ge \min\{2k, n\}$ cannot be weakened,
at least for certain values of $n$ and $k$.

\begin{example}
Consider the map $w:\R^4\to S^2\subset \R^3$ defined by
\[
w(x) = H( \frac x{|x|}) \quad\mbox{ if }x\ne 0,\quad
w(0)= 0,
\]
where $H:S^3\to S^2$ is the Hopf fibration. Recall that every level set of $H$ has the form
$\{ (z, \zeta)\in \C^2\cong \R^4 : |z|^2 + |\zeta|^2 = 1, \  \alpha z  = \beta \zeta \}$
for some fixed $\alpha,\beta \in \C$ (one of which can always be taken to equal $1$). 
From this one easily checks that 
$w$ is a $2$-plane passing through the origin, and that the intersection of
any two level sets is $\{0\}$. Thus, $w$ is
pointwise weakly $(n-k)$-flatly foliated (see Definition \ref{Wk-foliated})
with $n=4, k=2$ and $F_2 = \R^4$, $F_0 = F_1  = \emptyset$, and $w\in W^{1,p}$ for
all $p< 4 = \min \{2k, n\}$. But clearly $w$ is not continuous.

This example shows the hypothesis $p\ge\min\{ 2k, n\}$ of Proposition \ref{P.sd}
cannot be weakened when $n = 2k = 4$.
\label{ex:Hopf1}\end{example}

\begin{example}
Next, for $n\ge 5$ define $w_1:\R^n\to  \R^3$ by
$w_1(x^1,\ldots, x^n) = w(x^1,\ldots, x^4)$
where $w$ is the function from the above example. 
Then  $w_1$ is pointwise weakly $(n-k)$-flatly foliated
with $k=2$ and $F_2 = \R^n$, $F_0 = F_1  = \emptyset$. Also,  $w_1\in W^{1,p}_{loc}$ for
all $p< 4 = \min \{2k, n\}$.  But again $w_1$ is not continuous.

So the condition $p\ge \min\{ 2k,n\}$ cannot be weakened whenever $k=2$ and $n >4$.
\label{ex:Hopf2}\end{example}

\begin{example}
One can construct  a function  similar to that of Example \ref{ex:Hopf1} when $n = 2k = 8$ or $16$ by using  
Hopf fibrations $S^7\to S^4$ and $S^{15}\to S^8$, 
and similarly a function similar to the one in Example  \ref{ex:Hopf2} when $n > 2k = 8$ or $16$ 
It follows that the condition $p\ge \min\{2k,n\}$ cannot be weakened whenever
$k = 4$ or $8$ and $n\ge 2k$.
\label{ex:Hopf3}\end{example}

\begin{remark} One can check that the $w:\R^n\to \R^\ell$ constructed in the above examples
are not gradients of scalar functions. In fact we conjecture
that if we add to Proposition \ref{P.sd} the assumption that $w = Du$ for some scalar function
$u$, then the conclusions of the proposition should still be true if we merely assume $p\ge k+1$.
\end{remark}

The next lemma, whose proof is very similar to that of Lemma \ref{L.improve1}, still only needs the minimal 
regularity assumptions $p>k$.

\begin{lemma}
Assume that $k,n$ are integers such that $1 \le k<n$. Let $\Omega$
be an open subset of $\R^n$, and assume that $w\in W^{1,p}_{loc}(\Omega; \R^\ell)$ for some $p>k$.
Finally, assume that $P$ is an $n-k$-plane in $\Omega$ such that $w= \xi$ a.e. on $P$ for some $\xi\in \R^\ell$.

If $x\in P$ is a Lebesgue point of $|Dw|^p$, then $x$ is a Lebesgue point of $w$, and
$w(x) = \xi$.
\label{L.lebpt}\end{lemma}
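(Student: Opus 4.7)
The plan is to interpret the conclusion as a trace-type estimate: since $p>k$, the restriction of any $W^{1,p}$ function to the $(n-k)$-plane $P$ admits a Morrey trace, and I would use H\"older continuity on $k$-dimensional slices transverse to $P$ to compare $w$ with $\xi$ quantitatively on small cylinders about $x$. This is essentially the same mechanism underlying Lemma~\ref{L.improve1}, specialized to two ``constant graphs''.

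After a rigid motion I can assume $x=0$ and that $P$ contains a relative neighborhood of $0$ in $\R^{n-k}\times\{0\}$; write $\R^n=\R^{n-k}\times\R^k$ with coordinates $(y,z)$. For $r$ small enough the cylinder $C_r:=B_r^{n-k}\times B_r^k$ is contained in $\Omega$, and $B_r(0)\subset C_r\subset B_{r\sqrt 2}(0)$. By Fubini, for $\calL^{n-k}$-a.e.\ $y\in B_r^{n-k}$ the slice $w(y,\cdot)$ lies in $W^{1,p}(B_r^k)$; since $p>k$, Morrey's embedding produces a continuous representative $\tilde w_y$ of that slice, a value $V(y):=\tilde w_y(0)$, and the scale-invariant bound
\[
|\tilde w_y(z)-V(y)|^p \ \le\ C|z|^{p-k}\int_{B_r^k}|D_zw(y,z')|^p\,dz'.
\]
Integrating over $C_r$ and using $\int_{B_r^k}|z|^{p-k}\,dz = c_k\, r^p$ would give
\[
\int_{C_r}|w-V|^p\,dx \ \le\ C r^p \int_{C_r}|Dw|^p\,dx.
\]

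The next step would be to identify $V(y)$ with $\xi$ for $\calL^{n-k}$-a.e.\ $y$. For $p>k$, standard trace theory yields $V(y)=$ precise (Lebesgue-point) representative of $w$ at $(y,0)$ for $\calL^{n-k}$-a.e.\ $y$, and the hypothesis that $w=\xi$ $\calH^{n-k}$-a.e.\ on $P$ then gives $V=\xi$ a.e. Substituting, dividing by $|C_r|\sim r^n$, and using $B_r(0)\subset C_r\subset B_{r\sqrt 2}(0)$ to replace cylinder averages by Euclidean-ball averages yields
\[
\barint_{B_r(0)}|w-\xi|^p \ \le\ C r^p \, \barint_{B_{r\sqrt 2}(0)}|Dw|^p.
\]
Since $x=0$ is a Lebesgue point of $|Dw|^p$, the right-hand side tends to $0$ as $r\to 0$, proving simultaneously that $x$ is a Lebesgue point of $w$ and that the precise value is $\xi$.

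The main technical obstacle is the bookkeeping identification $V=\xi$ a.e. The hypothesis is stated in terms of the ambient precise representative of $w$ in $\R^n$, while $V$ is defined as the continuous value of a slice representative, and a priori these live on different sets. For $p>k$ they coincide $\calH^{n-k}$-a.e.\ on $P$: the capacity estimate recalled in Section~\ref{background} puts the ambient Lebesgue singular set off $P$ in Hausdorff dimension strictly less than $n-k$, while Fubini together with the uniform H\"older control on $\tilde w_y$ shows that the ambient Lebesgue value at $(y,0)$ exists and equals $V(y)$ for $\calL^{n-k}$-a.e.\ $y$. Modulo this routine identification, the whole argument reduces to Morrey's embedding on $k$-dimensional transverse slices together with the Lebesgue point assumption on $|Dw|^p$.
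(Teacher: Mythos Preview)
Your proof is correct and follows essentially the same route as the paper's: split into transverse $k$-dimensional slices, apply the scale-invariant Morrey embedding on each slice to bound $|w(y,z)-\xi|^p$ by $|z|^{p-k}\int_{B_r^k}|D_zw(y,\cdot)|^p$, integrate, and finish using the Lebesgue-point hypothesis on $|Dw|^p$. The paper is simply terser---it writes $|w(y,z)-w(y,0)|$ directly and thereby absorbs your ``$V=\xi$'' identification into the opening line of the estimate, whereas you make that bookkeeping explicit.
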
 

\begin{proof}
We may assume after a translation and a rotation that $P$ is a connected component of $\Omega \cap (\R^{n-k}\times \{0\})$, and that $x = 0$.
Fix $R>0$ such that $B^{n-k}_{R}\times B^k_R \subset \Omega$,
and let $\alpha = 1-\frac kp$.
Then for any positive $r<R$, writing $[f]_\alpha$ to denote the $\alpha$-H\"older seminorm,
\begin{align*}
\barint_{B^{n-k}_r\times B^k_r} |w - \xi|^p \, dy\,dz\ 
&=
\barint_{B^{n-k}_r} 
\left( \barint_{B^{k}_r} | w(y,z) - w(y,0)|^p dz
\right) dy\\
&
\le
\barint_{B^{n-k}_r} 
\left(
 \barint_{B^{k}_r} |z|^{p\alpha} [w(y,\cdot)]_\alpha^p dz
\right) dy.
\end{align*}
Also, by the $k$-dimensional Sobolev embedding,
\begin{align*}
\barint_{B^{k}_r} |z|^{p\alpha} [w(y,\cdot)]_\alpha^p dz
\le
C r^{\alpha p - k} \int_{B^k_r} |Dw(y,z)|^p \ dz = Cr^{\alpha p} \barint _{B^k_r} |Dw(y,z)|^p \ dz
\end{align*}
with a constant $C$ independent of $r$.
Thus
\[
\barint_{B^{n-k}_r\times B^k_r} |w - \xi|^p  \, dy\,dz\
\le r^{\alpha p} 
\barint_{B^{n-k}_r\times B^k_r} |Dw|^p \, dy\,dz\ .
\]
Since $x$ is a Lebesgue point of $|Dw|^p$, the right-hand side is bounded by
$Cr^{p\alpha}$ for all small $r$, proving the lemma.

\end{proof}

The restriction $p\ge \min \{2k, n\}$ in Proposition \ref{P.sd} arises from
the following lemma.

\begin{lemma}%[Impossibility of cone-type singularities]
Assume that $\Omega$ is an open subset of $\R^n$
and that  $w\in W^{1,p}_{loc}(\Omega, \R^\ell)$ for some $\ell$ and some $p\ge 1$. Suppose that for $i=1,2$, there exist
values $\xi^i \in \R^n$,  planes $P_i$ in $\Omega$
of dimension $n-k$ such that 
\[
P_1\cap  P_2\ne \emptyset,
\quad\quad\quad\quad\mbox{ and } \ \ w = \xi^i, \ \ \ \mbox{ $\calH^{n-k}$ a.e. in }P_i
\]
for $i=1,2$. If $p\ge \min\{ n, 2k\}$  then $\xi_1 = \xi_2$.
\label{L.no_sing_k}\end{lemma}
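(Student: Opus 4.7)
The plan is to prove the lemma by a direct estimate that combines a Poincar\'e--Friedrichs inequality (exploiting that $w=\xi_1$ on $P_1$) with a Sobolev trace inequality onto $P_2$, after first reducing to the minimal ambient dimension in which the two planes can still meet as transversely as possible.

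First I would dispose of the trivial case. Let $V_i$ denote the linear direction of $P_i$. If $V_1=V_2$, then $P_1$ and $P_2$ are connected components of $(V_1+x_0)\cap\Omega$ sharing the point $x_0\in P_1\cap P_2$, hence coincide, and $\xi_1=\xi_2$. So assume $V_1\ne V_2$, set $W:=V_1\cap V_2$, $j:=(n-k)-\dim W\in\{1,\ldots,\min\{k,n-k\}\}$, and split $\R^n=W\oplus W^\perp$ orthogonally, with $m:=\dim W^\perp=k+j\le\min\{2k,n\}\le p$. For $y\in W$ near $0$, consider the $m$-dimensional slice $\Pi_y:=x_0+y+W^\perp$. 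A direct computation gives $P_i\cap\Pi_y=x_0+y+(V_i\cap W^\perp)$, a $j$-plane through $x_0+y$, and the two $j$-planes $P_1\cap\Pi_y$ and $P_2\cap\Pi_y$ meet only at $x_0+y$, because $(V_1\cap W^\perp)\cap(V_2\cap W^\perp)=W\cap W^\perp=\{0\}$. By Fubini, for $\calL^{n-k-j}$-a.e.\ $y$ near $0$ the restriction $v:=w|_{\Pi_y}$ lies in $W^{1,p}$ of a small ball $B_r^m\subset\Pi_y$ centered at $x_0+y$, and $v=\xi_i$ $\calH^j$-a.e.\ on $Q_i:=P_i\cap\Pi_y$. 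Fix such a $y$ and set $f:=v-\xi_1$.

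For the main estimate I would use two scaled inequalities on $B_r^m$. Since $p\ge\min\{2k,n\}\ge k+1>k$ and $Q_1\cap B_r^m$ is a $j$-disk of codimension $k$ inside $B_r^m$, its $p$-capacity is positive and rescaling the classical Poincar\'e--Friedrichs inequality from $B_1^m$ yields
\[
\int_{B_r^m}|f|^p\ \le\ C r^p\int_{B_r^m}|\nabla f|^p.
\]
Because also $p>k=m-j$, the standard Sobolev trace inequality onto the $j$-plane $Q_2\subset\R^m$, rescaled to radius $r$, gives
\[
\int_{Q_2\cap B_r^m}|f|^p\,d\calH^j\ \le\ C\!\left(r^{-k}\!\int_{B_r^m}|f|^p+r^{p-k}\!\int_{B_r^m}|\nabla f|^p\right).
\]
Since $f=\xi_2-\xi_1$ $\calH^j$-a.e.\ on $Q_2\cap B_r^m$, the left-hand side equals $c\,r^j\,|\xi_1-\xi_2|^p$. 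Substituting the first display into the second and dividing by $r^j$ yields
\[
|\xi_1-\xi_2|^p\ \le\ C\,r^{p-m}\int_{B_r^m}|\nabla v|^p.
\]

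Finally, since $p\ge m$ the factor $r^{p-m}$ stays bounded as $r\to 0$, while absolute continuity of the integral forces $\int_{B_r^m}|\nabla v|^p\to 0$; hence $\xi_1=\xi_2$. The main obstacle is the critical case $p=m$, which occurs precisely when the planes are maximally transverse (i.e.\ $j=\min\{k,n-k\}$ and $p=\min\{2k,n\}$): here no subcritical Sobolev embedding yields continuity of $v$, and the argument above is sharp in the sense that the borderline prefactor $r^{p-m}=1$ is closed only by the vanishing of the gradient energy on shrinking balls.
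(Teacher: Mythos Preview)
Your argument is correct and takes a genuinely different route from the paper's. The paper does not use Poincar\'e or trace inequalities at all; instead it integrates over spheres. After reducing (in the case $2k<n$) to the decomposition $\R^n=\R^{n-2k}_y\times\R^{2k}_z$ with $P_1\cap P_2=\R^{n-2k}\times\{0\}$, the paper observes that for a.e.\ $(y,\sigma)$ the essential oscillation of $w$ on $\{y\}\times\partial B^{2k}_\sigma$ is at least $|\xi_1-\xi_2|$, and the Sobolev embedding on the $(2k-1)$-sphere gives
\[
|\xi_1-\xi_2|^{2k}\le C\,\sigma\int_{\{y\}\times\partial B^{2k}_\sigma}|Dw|^{2k}\,d\calH^{2k-1}.
\]
Integrating in $\sigma$ and $y$ then produces the divergent factor $\int_0^s\sigma^{-1}\,d\sigma$ against the finite quantity $\int|Dw|^{2k}$, forcing $\xi_1=\xi_2$; the case $2k\ge n$ is the same with $n$-spheres about a single point.

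Your approach replaces the spherical-oscillation step by the pair ``Poincar\'e with zero set $Q_1$'' $+$ ``trace onto $Q_2$'', and replaces the logarithmic divergence by the absolute continuity $\int_{B_r^m}|\nabla v|^p\to 0$. One advantage of your route is that it handles the non-maximally-transverse case (arbitrary $j=\dim V_1-\dim(V_1\cap V_2)$) uniformly, whereas the paper tacitly treats the worst case $\dim(V_1\cap V_2)=n-2k$. Conversely, the paper's argument is more elementary---it uses only the Sobolev embedding on spheres and avoids any appeal to capacity or codimension-$k$ trace theory---and it makes explicit the logarithmic blow-up that is the underlying reason the endpoint $p=\min\{2k,n\}$ is included. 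Both arguments identify the same critical exponent and are sharp for the same reason.
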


\begin{proof}
{\bf 1}. We first consider the case $2k< n$.

Let $x_0\in \Omega \cap P_1\cap P_2$.
Any two planes of dimension $n-k$ that intersect at a point must intersect
along a plane of dimension $n-2k$.
We may assume after a translation that $x_0$ is the origin, and after a 
rotation that $P_1\cap P_2 = \R^{n-2k}\times \{0\}$.
We write $y$ and $z$ respectively to denote points in $\R^{n-2k}$ and in $\R^{2k}$, 
and we fix $r$ and $s$ such that $B_r^{n-2k}\times B_s^{2k} \subset \Omega$.
Then for $\calH^{n-2k+1}$ a.e. $(y,\sigma) \in B_r^{n-2k}\times (0,s)$,
\[
\mbox{ess\,osc}_{\{y\}\times  \partial B^{2k}_\sigma}|w| \ge |\xi_1-\xi_2|,
\]
so that by the Sobolev embedding theorem,
\[
|\xi_1-\xi_2|^{2k} \le C  \sigma  \int_{\{y\}\times  \partial B^{2k}_\sigma} |Dw|^{2k} \ d\calH^{2k-1}.
\]
Thus
\begin{align*}
\int_{B^{n-2k}_r\times B^{2k}_s}
|Dw|^{2k}
&= 
\int_{B^{n-2k}_r} \int_0^s \int_{\{y\}\times  \partial B^{2k}_\sigma} |Dw|^{2k} \ d\calH^{2k-1} \ d\sigma\ dy \\
&\ge
c |\xi_1-\xi_2|^{2k}  \int_{B^{n-2k}_r} \int_0^s \frac 1\sigma d\sigma \ dy .
\end{align*}
The left-hand side is finite, so 
it follows that $|\xi_1-\xi_2| = 0$.

{\bf 2}. The case $2k \ge n$ is similar but easier.
Here, all we can say about any two $n-k$-planes with
nonempty intersection is that their intersection must contain
a point $x_0$. Hence, the essential oscillation of $w$ on a.e. small sphere centered at
$x_0$ is bounded below by $|\xi_1-\xi_2|$, and as a result
\beq
\int_{B_s^n(x_0)}|Dw|^n = \int_0^s \int_{\partial B^n_\sigma(x_0)} |Dw|^n \ge c |\xi_1-\xi_2|^n \int_0^s \frac 1 \sigma\ d\sigma.
\label{case2k.gtr.n}\eeq
We conclude as before that $|\xi_1-\xi_2|=0$.
\end{proof}

\begin{remark}If $2k \ge n$, then a small modification of the above proof shows that the conclusion remains true
if we assume $w = \xi_1$ a.e. in $P_1$ and that  
$w = \xi_2$ at $\calH^1$ a.e. point of a connected, relatively open subset
$U\subset P_2$, with $P_1\cap \bar U\ne \emptyset$. Indeed,  these hypotheses imply
the existence of an open line segment containing $x_0$ on which $w= \xi_1$ a.e., and a second open line
segment with an endpoint at $x_0$ on which $w=\xi_2$ a.e., and these conditions imply that
the essential oscillation of $w$ on a.e. small sphere centered at
$x_0$ is bounded below by $|\xi_1-\xi_2|$, allowing us to conclude as in \eqref{case2k.gtr.n}.

\label{no.sing.bis}\end{remark}
 
Our next result follows rather easily from the above two lemmas.

\begin{lemma}
Assume that $w\in W^{1,p}_{loc}(\Omega;\R^\ell)$ for some $p\ge \min\{2k, n\}$. If $w$ 
is pointwise weakly $(n-k)$-flatly foliated,
then there exists a function $\overline{w}:\Omega\to \R^n$ such that
\begin{equation}
\overline{w}|_{F_j} \mbox{ is continuous  for every }j\in \{0,\ldots, k\}
\label{w1.1}
\end{equation} and 
\begin{equation}
\overline{w} = w \mbox{ {\em a.e.} in $\Omega$}.
\label{w2.1}
\end{equation}
In particular, 
for every $x\in F_j$, there is an $n-j$ plane in $\Omega_j$ containing $x$
on which $\overline{w}= \overline{w}(x)$ everywhere, where $F_j$ and $\Omega_j$ are given as in Definition \ref{Wk-foliated}.
\label{L.w1}\end{lemma}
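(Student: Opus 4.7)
My plan is to construct $\overline{w}$ directly from the pointwise foliation hypothesis and then verify its three properties by systematic application of Lemmas \ref{L.no_sing_k}, \ref{L.lebpt}, and \ref{closure}.

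For each $j\in\{0,\ldots,k\}$ and each $x\in F_j$, the hypothesis yields an $(n-j)$-plane $P_x\subset\Omega_j$ through $x$ and a value $\xi_x\in\R^\ell$ such that $w=\xi_x$ $\calH^{n-j}$-a.e.\ on $P_x$; I set $\overline{w}(x):=\xi_x$. Since $w|_{\Omega_j}\in W^{1,p}_{loc}$ with $\mathrm{rank}(Dw)\le j$ a.e.\ and $p\ge\min\{2k,n\}\ge\min\{2j,n\}$, Lemma \ref{L.no_sing_k} applied in $\Omega_j$ shows that any two $(n-j)$-planes through $x$ in $\Omega_j$ on which $w$ is a.e.\ constant must give the same constant, so $\overline{w}(x)$ is well-defined. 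Because $p>k\ge j$, Lemma \ref{L.lebpt} then gives $w(x)=\xi_x=\overline{w}(x)$ at every Lebesgue point of $|Dw|^p$ that lies on $P_x$, and since a.e.\ point of each $F_j$ enjoys this property, \eqref{w2.1} follows.

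To prove continuity of $\overline{w}|_{F_j}$, fix a sequence $x_m\to x_0$ in $F_j$ with associated witness planes $P_{x_m}\subset\Omega_j$ carrying $w\equiv\overline{w}(x_m)$ a.e. Lemma \ref{closure} applied in $\Omega_j$ (legitimate since $p>j$) produces, along a subsequence, a limit value $\xi=\lim\overline{w}(x_m)$ and an $(n-j)$-plane through $x_0$ in $\Omega_j$ on which $w\equiv\xi$ a.e. Well-definedness then forces $\xi=\overline{w}(x_0)$, and the usual subsequence argument converts this into convergence of the full sequence.

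Finally, for the pointwise constancy of $\overline{w}$ on $P_x$, fix $y\in P_x$ and let $j'\le j$ be the unique index with $y\in F_{j'}$. The case $j'=j$ is immediate from well-definedness, since $P_x$ itself is a witness plane through $y$ in $\Omega_j$. In the remaining case $j'<j$, one has $y\in\Omega_{j'}$ (open), and the connected component $\tilde P$ of $\PP_x\cap\Omega_{j'}$ through $y$ is an $(n-j)$-plane in $\Omega_{j'}$ with $w\equiv\overline{w}(x)$ a.e., while the hypothesis also yields an $(n-j')$-plane $P'\subset\Omega_{j'}$ through $y$ with $w\equiv\overline{w}(y)$ a.e. The main obstacle lies here: one must compare two planes of different codimensions $j$ and $j'$ meeting at $y$. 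I expect to handle this by a straightforward extension of the oscillation argument from Lemma \ref{L.no_sing_k}: slicing perpendicular to $\tilde P\cap P'$ when $j+j'\le n$, or using small $n$-balls around $y$ when $j+j'>n$ (which forces $n<2k$ and hence $p\ge n$), yields small spheres on which $w$ has essential oscillation at least $|\overline{w}(x)-\overline{w}(y)|$, and the Sobolev embedding in $W^{1,d}$ with $d=\min\{j+j',n\}\le\min\{2k,n\}\le p$ produces the divergent integral $\int\sigma^{-1}\,d\sigma$ unless $\overline{w}(y)=\overline{w}(x)$.
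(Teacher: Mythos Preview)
Your proof is correct and, for the core conclusions \eqref{w1.1} and \eqref{w2.1}, follows the paper's approach exactly: the definition of $\overline{w}$ via the witness planes, well-definedness through Lemma~\ref{L.no_sing_k}, the a.e.\ agreement $\overline{w}=w$ through Lemma~\ref{L.lebpt}, and continuity of $\overline{w}|_{F_j}$ via Lemma~\ref{closure} all match the paper. (Your parenthetical remark that $\mathrm{rank}(Dw)\le j$ on $\Omega_j$ is extraneous --- it is neither assumed in this lemma nor needed for Lemma~\ref{L.no_sing_k}.)

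Where you differ is in your final paragraph. The paper's proof stops after Step~3 (continuity on each $F_j$) and does not argue the ``In particular'' clause at all; you supply an argument by extending the oscillation estimate of Lemma~\ref{L.no_sing_k} to two planes of \emph{different} codimensions $j'<j$ meeting at a point. Your sketch is correct: since $j+j'\le 2k-1$, one has $d=\min\{j+j',n\}<\min\{2k,n\}\le p$, so in the case $j+j'\le n$ the slicing argument goes through with $(j+j')$-dimensional transverse balls, while in the case $j+j'>n$ one automatically has $p\ge n$ and the full-sphere argument applies. One small refinement: when $j'=0$ the plane $P'$ is an open set, so $\tilde P\cap P'$ has positive $\calH^{n-j}$ measure and the two a.e.\ values coincide directly, without any oscillation argument. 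It is worth noting that the paper itself tacitly invokes exactly this kind of unequal-codimension comparison later, in the proof of Lemma~\ref{L.w2} (when applying ``Lemma~\ref{L.no_sing_k}'' to $P$ and $P_0$, which have dimensions $n-j$ and $n-i$ with $i>j$); your version makes the needed extension explicit.
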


\begin{proof}
{\bf 1}. We define $\overline{w}$ by requiring that
\[
\overline{w}(x) = \xi\mbox{ if $x\in F_j$ and $w = \xi$ a.e. on some $n-j$-plane $P$ in $\Omega_j$ passing through $x$}.
\]
We claim that that $\overline{w}$ is well-defined. Towards this end, 
note that every $x$ belongs to a unique $F_j$ by \eqref{Fj1.2}
and hence by \eqref{Fj3.2} belongs to at least one $n-j$-plane in $F_j$ on which $w$ is a.e.
constant. Then by Lemma \ref{L.no_sing_k}, the values of $w$ on any two such planes must agree
a.e., so the claim follows.

{\bf 2}. It follows from the definition of $\overline{w}$ and Lemma \ref{L.lebpt} that $w=\overline{w}$ at every
Lebesgue point of $|Dw|^p$, which implies \eqref{w2.1}.

{\bf 3}.
To  verify that \eqref{w1.1} holds, assume assume toward a contradiction that $\overline{w}|_{F_j}$
is not continuous at some point $x_0\in F_j$. 
Then there exists a sequence $(x_m)$ in $F_j$ such that
\[
|x_m - x_0| <  \frac 1m, \quad\quad
|\overline{w}(x_m) - \overline{w}(x_0)| \ge c_0
\]
for some $c_0>0$. Let $\xi_m := \overline{w}(x_m)$, and let $P_m$ be a $n-j$ plane in $\Omega_j$
such that $\overline{w} = \xi_m$ on $P_m$.
Then
\beq
P_m\cap B_{1/m}(x_0)\ne \emptyset\quad\quad\quad
w = \xi_m \mbox{ a.e.  on }P_m.
\label{c1.1}\eeq
Then Lemma \ref{closure} implies that
there exists some exactly $(n-j)$-plane $P'$ in $\Omega_j$ and some $\xi'\in \R^n$
such that
\[
x_0\in P', \quad\quad
\xi_m \to \xi', \quad \quad
\mbox{ and }w = \xi' \ \ \calH^{n-j} \ \mbox{ a.e. on }P'.
\]
The definition of $\overline{w}$ implies that $\overline{w}(x_0) = \xi'$. This however is impossible, since $\xi_m\to \xi'$
and $|\xi_m - \overline{w}(x_0)|\ge c_0$ for  all $m$.
This contradiction shows that $\overline{w}|_{F_j}$ is continuous on $F_j$.

\end{proof}

Our next goal is to show that the function $\overline{w}$ found above is continuous in all of $\Omega$. This
will directly imply the continuity of $w$,  and hence will conclude the proof of our main results.

\begin{lemma}
Assume that $w\in W^{1,p}_{loc}(\Omega;\R^\ell)$ for some $p\ge \min\{2k, n\}$ and that $w$ 
is pointwise weakly $(n-k)$-flatly foliated.
Let $\overline{w}$ be the function found in Lemma \ref{L.w1}. Then $\overline{w}$ is continuous in $\Omega$,
and as a result, $w$ is continuous in $\Omega$.
\label{L.w2}\end{lemma}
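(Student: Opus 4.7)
I will prove Lemma~\ref{L.w2} by contradiction. By Lemma~\ref{L.w1}, $\overline w|_{F_j}$ is continuous for every $j$, and by openness of $\Omega_j$ any sequence in $\Omega$ approaching $x_0 \in F_j$ must eventually lie in $\Omega_j = F_0 \cup \cdots \cup F_j$; continuity within $F_j$ handles the case $j'=j$. So, after a subsequence, I may assume $x_0 \in F_j$, $x_m \in F_{j'}$ with $0 \le j' < j \le k$, and $\xi_m := \overline w(x_m) \to \xi' \ne \xi_0 := \overline w(x_0)$. Invoking Lemma~\ref{L.w1} again furnishes an $(n-j')$-plane $P_m \subset \Omega_{j'}$ through $x_m$ on which $\overline w \equiv \xi_m$, and an $(n-j)$-plane $Q \subset \Omega_j$ through $x_0$ on which $\overline w \equiv \xi_0$. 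By Grassmannian compactness, after a further subsequence the affine extensions $\PP_m \supset P_m$ converge in Hausdorff distance on bounded sets to an $(n-j')$-plane $\PP$ through $x_0$. Denote by $P$ the connected component of $\PP \cap \Omega$ containing $x_0$, and by $C$ the (eventually constant) connected component of $\PP \cap \Omega_{j'}$ containing the orthogonal projections of $x_m$ onto $\PP$, so that $C \subset P$ and $x_0 \in \bar C$.

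\textbf{The key step: $p$-quasicontinuity and value transfer.} The heart of the proof is to show $w = \xi'$ $\calH^{n-j'}$-a.e.\ on $C$. Let $w^*$ be the $p$-quasicontinuous representative of $w$, equal to $\overline w$ at every Lebesgue point of $w$; for each $\e > 0$ there is an open $U_\e$ with $\mathrm{Cap}_p(U_\e) < \e$ such that $w^*|_{\Omega \setminus U_\e}$ is continuous, and I arrange $x_0 \notin U_\e$. Because $p > k \ge j'$, the capacity estimate $\calH^{n-p+\delta}(E) = 0$ recalled in Section~\ref{background} implies Lebesgue points of $w$ are $\calH^{n-j'}$-dense in $P_m$, so by continuity $w^* \equiv \xi_m$ on the open set $P_m \setminus U_\e$. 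For any $y \in C \setminus U_\e$ I will show that the shadow $y_m \in \PP_m$ of $y$ lies in $P_m \setminus U_\e$ for large $m$: since $C \subset \PP \cap \Omega_{j'}$ is open and connected, $y$ is path-connected in $C$ to the projection of $x_m$, and by openness of $\Omega_{j'}$ the path can be perturbed onto $\PP_m$ while remaining in $\Omega_{j'}$, so $y_m$ and $x_m$ lie in the same connected component of $\PP_m \cap \Omega_{j'}$, namely $P_m$. Then $w^*(y) = \lim w^*(y_m) = \lim \xi_m = \xi'$; letting $\e \to 0$ gives $w = \xi'$ $\calH^{n-j'}$-a.e.\ on $C$.

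\textbf{Contradiction.} At this point, $w = \xi'$ $\calH^{n-j'}$-a.e.\ on the open set $C$, which accumulates at $x_0$ inside the $(n-j')$-plane $P$, while $\overline w \equiv \xi_0$ on the $(n-j)$-plane $Q$ through $x_0$. A variant of the sphere-integration argument in Lemma~\ref{L.no_sing_k}, applied to planes of possibly different dimensions $n-j'$ and $n-j$, then finishes the proof: on concentric $(d-1)$-spheres near $x_0$ inside an affine subspace of dimension $d := \min\{n, j+j'\}$ chosen to meet both $C$ and $Q$ transversally, the essential oscillation of $w$ is at least $|\xi' - \xi_0|$; the Sobolev embedding at the critical exponent $d \le \min\{n, 2k\} \le p$, combined with integration in the radius, produces $\int |Dw|^d = +\infty$ on a bounded subset, contradicting $w \in W^{1,p}_{loc}$ via H\"older's inequality.

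\textbf{Main obstacle.} The hardest step is the value transfer, because $P_m$ is only the $\Omega_{j'}$-component of $\PP_m$ through $x_m$ rather than the full $\Omega$-component; as Examples~\ref{ex.dense} and~\ref{ex:4} illustrate, the $\Omega_{j'}$ stratum can be dense while the foliating planes oscillate wildly near strata of higher index, so one cannot simply extend $\overline w \equiv \xi_m$ to the whole $\Omega$-component of $\PP_m$. The role of $p$-quasicontinuity is precisely to let us discard a thin exceptional set $U_\e$ on which these geometric arguments fail, while the capacity-versus-Hausdorff estimate (guaranteed by $p > j'$) ensures that $U_\e$ does not disturb the $(n-j')$-dimensional limit on $\PP$.
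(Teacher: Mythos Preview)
Your approach is genuinely different from the paper's (which runs a downward induction on strata and proves a boundary--value claim \eqref{wmainpoint} via a capacity--dimension argument), but there is a real gap in your value--transfer step that I do not see how to close.

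The problem is the claim that the shadow $y_m\in\PP_m$ of a point $y\in C\setminus U_\e$ lies in $P_m$. Your argument takes a path $\gamma_m$ in $C$ from $y$ to the projection $\pi_m$ of $x_m$, and then perturbs $\gamma_m$ onto $\PP_m$ while remaining in $\Omega_{j'}$. But $\pi_m\to x_0\in\partial\Omega_{j'}$, so $\gamma_m$ is forced to approach $\partial\Omega_{j'}$; the distance from $\gamma_m$ to $\partial\Omega_{j'}$ tends to $0$ with $m$, and so does the size of the perturbation onto $\PP_m$. There is no reason the second quantity should be smaller than the first --- the stratification boundary $\partial\Omega_{j'}=F_{j'+1}\cup\cdots\cup F_k$ can be extremely irregular (cf.\ Examples~\ref{ex.dense},~\ref{ex:4}) --- so the perturbed path may leave $\Omega_{j'}$, severing the connection between $y_m$ and $x_m$ inside $\PP_m\cap\Omega_{j'}$. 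In fact already the preliminary assertion that $\pi_m\in\Omega_{j'}$ (so that $\pi_m\in C$) is unclear for the same reason: $|\pi_m-x_m|\to 0$ and $\operatorname{dist}(x_m,\partial\Omega_{j'})\to 0$ are not comparable. Finally, even granting $y_m\in P_m$, the claim $y_m\notin U_\e$ does not follow from $y\notin U_\e$, since $U_\e$ is open.

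This connectivity obstruction is exactly what the paper's argument is designed to circumvent. There one proves by downward induction that $\overline w|_{F_{\ge j}}$ is continuous; the key point is the claim \eqref{wmainpoint} that $\overline w$ extends with the same value to $\bar P\cap F_{>j}$ for every $(n-j)$--plane $P$ in $\Omega_j$. Its proof uses $p$--quasicontinuity in a different way from yours: if \eqref{wmainpoint} failed at some $x_0$, one exhibits a relatively open ball $B_0$ of dimension $d\ge n-2k+1$ such that for every $y\in B_0$ the slice $\{y\}\times B^{n-d}_r$ contains an essential discontinuity of $w$; the quasicontinuity exceptional set must then project onto $B_0$, which is impossible since $\operatorname{Cap}_p(B_0)>0$ when $d>n-p$. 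Once \eqref{wmainpoint} is available, the situation you are trying to handle directly (Case~2 in the paper) is resolved by picking boundary points $y_{m_q}\in\bar P_{m_q}\cap F_{>j}$ near $x_0$ and invoking the induction hypothesis --- no connectivity argument across $\partial\Omega_{j'}$ is needed.
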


Before giving the proof, we recall that every $f\in W^{1,p}(\Omega,\R^\ell)$
is {\em p-quasicontinuous}, which means that 
for every $\e>0$, there exists an open set $O\subset \Omega$ such that Cap$_p(O)< \e$ and
$f|_{\Omega\setminus O}$ is continuous.
For the definition and the few properties of capacity that are needed  for our argument (e.g. the above statement)  refer to \cite {eg}, 
unless another reference is provided.

The idea of the proof below is to show that, given what we already know about $w$, if
it is discontinuous anywhere, then it must fail to be $p$-quasicontinuous, for $p=\min\{2k,n\}$,
which is impossible. That is, we will  argue  (in the more difficult case $2k<n$)
that, in view of \eqref{w1.1},
any discontinuity of $\overline{w}$ would involve the intersection of (the closure of)
portions of planes on which $\overline{w}$ is constant, one having dimension at least 
$n-k$ and the other dimension at least $n-k+1$. This would lead
to a discontinuity set for $w$ of dimension at least $n-2k+1$, along which the
discontinuity cannot be eliminated  by cutting out an open set of small enough
$p$-capacity, the point being that a set of $p$-capacity zero
has dimension strictly less than $n-2k+1$.

\begin{proof}[Proof of Lemma \ref{L.w2}]
First, since $\overline{w}=w$ a.e.,  if $\overline{w}$ is continuous, then every $x\in \Omega$ is a Lebesgue point 
of $w$, and the Lebesgue value at $x$ equals $\overline{w}(x)$. So $w=\overline{w}$ pointwise in $\Omega$,
and the continuity of $w$ follows. Thus we only need to show that $\overline{w}$ is continuous.

It is convenient to write $F_{\ge j} := \bigcup_{m\ge j} F_m$, and similarly $F_{>j} := \bigcup_{\ell>j} F_m = F_{\ge j+1}$.
With this notation, we will prove that  by  (downward) induction on $j$ that 
\beq
\mbox{ $\overline{w}|_{F_{\ge j}}$ is continuous for every 
$j\in \{k, \ldots, 0\}$}
\label{wcont}\eeq
which in particular will imply that $\overline{w}$ is continuous on $F_{\ge 0} = \Omega$.

From Lemma \ref{L.w1} we already know that \eqref{wcont} holds for $j=k$. 
Now  we assume by induction
that $\overline{w}|_{F_{>j}}$ is continuous for some nonnegative $j<k$, and
we prove that $\overline{w}|_{F{\ge j}}$ is continuous.

{\bf Step 1}. We first show that
\beq
\mbox{if  $P$ is an $n-j$-plane in $\Omega_j$ for  which $\overline{w} = \xi$  on $P$, \ then }
\mbox{ 
$\overline{w} = \xi $ on $\bar P \cap F_{>j} $.}
\label{wmainpoint}\eeq
This is a key point of the proof. In the case $2k\ge n$, this follows in a straightforward 
way from Remark \ref{no.sing.bis}, so we focus on the case $2k<n$.
%Clearly we may assume that \blue{$P\subset F_j$.}

{\bf Step 1a}.
Assume toward a contradiction that \eqref{wmainpoint} fails,
so that for some $n-j$-plane $P$ in $\Omega_j$
%,   $\xi \ne \xi_0\in \R^n$,
and  $x_0\in \bar P\cap F_{>j}$ such that
\beq
\mbox{ $\overline{w} = \xi$ on $P$, \ and \  
 $ \overline{w}(x_0) = \xi_0$, \qquad for some $\xi \ne \xi_0\in \R^\ell$. }
\label{w.contra}\eeq
Then $x_0\in F_i$ for some 
$i> j$, so there exists an $n-i$-plane $P_0$ in $\Omega_i$
such that $x_0\in P_0$ and $\overline{w} = \xi_0$ in $P_0$.

We may assume that 
\beq
P\cap P_0 = \emptyset
\label{disjointPs}\eeq
because if there exists some $y_0\in P \cap P_0$,
then since both $P$ and $P_0$ are relatively open, we could apply
Lemma \ref{L.no_sing_k} on a small ball containing $y_0$ to conclude that 
$\xi = \xi_0$.

We may also assume (after a translation) that $x_0=0$.
We write  $\PP$ and $\PP_0$ to denote the planes (of dimension $n-j $ and $n-i$ respectively) that contain $P$ and $P_0$, 
and we let $d$ denote the dimension of $\PP\cap \PP_0$, 
so that $d\ge n-i-j \ge n-2k+1$, recalling that $j<i \le k$.
Also, $d< n - i = \dim(\PP_0) < n-j$.

We can arrange by a suitable rotation that
\[
\PP 
= 
\R^{n-j}\times \{0\}\subset \R^n,\quad\quad\quad
\PP\cap \PP_0 
= \R^{d}\times \{0\} \subset \R^n.
\]
We will write points in $\R^n$ in the form
$x = (y, z)$ with $y\in \R^{d}$, $z\in \R^{n-d}$.

By the induction hypothesis, we may fix $r>0$ so small that $B^d_r\times B^{n-d}_r\subset \Omega_i$
and
\beq
|\overline{w}(x) - \xi| > \delta := \frac 12|\xi_0-\xi| \quad\quad \mbox{ for all  }  \ \ x\in (B^d_r\times B^{n-d}_r)\cap F_{>j}.
\label{minjump}\eeq
Let $B$ be a relatively  open ball in $P\cap (B^d_r\times B^{n-d}_r) $, and %\subset (\R^{n-\ell}\times \{0\})$, and
let $B_0$ denote the orthogonal projection of $B$ onto $\R^{d}\times \{0\}$, so that
$B_0$ is a relatively open subset of $B^d_r\times \{0\}$.

\medskip

{\bf Step 1b}.
We claim that for every $y\in B_0$, the restriction of $w$ to $\{y\}\times B^{n-d}_r$
is discontinuous.

This is a consequence of the following two facts, which we will prove below.
First,
%we will argue below that the choice of $r$ implies that
\beq
\mbox{ $\forall y\in B_0$, \ \ $(\{ y\} \times B^{n-d}_r)\cap  \partial_\PP P$  is nonempty,}
\label{bdnonempty}\eeq
where $\partial_\PP P$ denotes the  boundary of $P$ in $\PP$.
Second, 
\beq
\mbox{  $w$  is discontinuous at every point of
$\partial_{\PP} P \cap (B^d_r\times B^{n-d}_r)$}.
\label{ess_discont}\eeq
(Recall that $w$ is identified with its precise representative, and that the complement of
the set of Lebesgue points has dimension less than $n-p-\ep$ for every $\ep>0$,
and in particular is a $\calH^{n-p+1}$ null set.)

To prove \eqref{bdnonempty}, we first note that
the definition of  $B_0$ implies directly that 
\beq
\mbox{$(\{ y\} \times B^{n-d}_r)\cap  P$  is nonempty\quad for $y\in B_0$.}
\label{dPP1}\eeq
Also, the definitions imply that 
\beq
\mbox{$B^d_r\times \{0\} \subset P_0$.}
\label{eek}\eeq
This is verified by noting that 
$P_0 \cap (B^d_r\times B_r^{n-d}) $ is nonempty, since $x_0 = (0, 0)\in
P_0$, and that in addition $P_0$ is a connected,
relatively open subset of $\PP_0\cap \Omega_i$.
Since $(B^d_r\times \R^{n-d}_r)\subset \Omega_i$, it follows that
$P_0$ contains $\PP_0\cap (B^d_r\times B^{n-d}_r)$, which implies
\eqref{eek}. 

From \eqref{eek} and \eqref{disjointPs} we see that $(y, 0)\not\in P$,
and hence  that
\beq
\mbox{$(\{ y\} \times B^{n-d}_r)\cap  (\PP\setminus P)$  is nonempty.}
\label{dPP2}\eeq
Since $P$ is a connected, relatively open subset of $\PP$,
the claim \eqref{bdnonempty} follows from \eqref{dPP1} and \eqref{dPP2}.

To prove \eqref{ess_discont}, fix $z\in \partial_\PP P\cap (B^d_r\times B^{n-d}_r)$,
and note that $z\in\Omega_i \setminus \Omega_j$, since $P$ is by definition
a connected component of $\PP\cap \Omega_j$, 
and $\Omega_j$ is open. Thus $z\in F_m$ for some
$j<m \le i$, and so there exists an $n-m$ plane $P_1$ in $\Omega_m$
containing $z$, and 
on which $w = \overline{w}(z)$   $\calH^{n-m}$ a.e..
So every ball around $z$ contains points at which $w = \overline{w}(z)$.
Similarly, \eqref{minjump} implies that 
every ball around $z$ contains points at which $w = \xi \ne \overline w(z)$.
Therefore \eqref{ess_discont} follows, completing Step 1b.

\medskip

We now establish \eqref{wmainpoint}. 
Since $w$ is $p$-quasicontinuous, for any $\ep>0$,
there exists a set $S$ such that 
the restriction of $w$ to $\Omega \setminus S$ is continuous,
and $\mbox{Cap}_p(S) < \ep$. By Step 1b, the orthogonal projection of $S$
onto $\R^d\times \{0\}$ must contain the open ball $B_0$.
Note that  $p$-capacity is not increased by orthogonal projection, e.g. by \cite[Theorem 3]{meyers} 
(See also \cite[Chapter 5]{ah} for further discussion of this type of results). Therefore it follows that 
$\mbox{Cap}_p(B_0) < \ep$ for every $\ep>0$, and hence that
$\mbox{Cap}_p(B_0) = 0$. This however is false, as a set with zero
$p$-capacity has $H^s$ measure $0$ for every $s>n -p$, and
the dimension $d$ of $B_0$ satisfies
$d \ge n -2k +1 > n-p$.
So we have proved  \eqref{wmainpoint}.
%
%\blue{in the notes of chapter 5 of the book of Adams and Hedberg this is 
%attributed to Meyers and Reshetnyak (independently) from the early 70's.
%Adams and Hedberg do not prove this, but they do show that if
%$Lip(f)\le 1$, then $Cap_p(f(E)) \le C_{p,n} Cap_p(R)$, which is enough for us.}
\medskip

{\bf Step 2}. We now use \eqref{wmainpoint} to prove the continuity of $\overline{w}$ on $F_{\ge j}$.

Clearly $F_{\ge j}$ is partitioned as  $F_{>j} \cup F_j$. Since $\Omega_j$ is open
and $F_j = \Omega_j \cap F_{\ge j}$, we see that $F_j$ is relatively open
and  $F_{> j}$ relatively closed  in $F_{\ge j}$. 
Thus, in view of the induction hypothesis and Lemma \ref{L.w1}, 
it suffices to check that if $x_0\in F_{>j}$
and $(x_m)$ is a sequence in $F_j$ converging to $x_0$, then 
$\overline{w}(x_m) \to \overline{w}(x_0)$.

Thus we fix some $x_0\in F_i$ for some $i>j$, 
and we assume toward a  contradiction, that there is a sequence $(x_m)$ in $F_j$ such that
\[
x_m\to x_0,\quad\quad |\overline{w}(x_m) - \overline{w}(x_0)| \ge c_0 >0\quad\mbox{ for all }m.
\]
The definition of $\overline{w}$ implies that
there exists an $n-i$-plane $P$ in $\Omega_i$ such that $x_0\in P\subset \Omega_i$, $\overline{w} = \overline{w}(x_0)$
everywhere on $P$, and $w = \overline{w}(x_0)$ almost everywhere on $P$.
It further implies that for each $x_m$, there exists a $n-j$-plane $P_m$ in $\Omega_j$ such that $x_m\in P_m$, and  on which
$\overline{w} = \xi_m := \overline{w}(x_m)$ everywhere, and $w= \xi_m$ almost everywhere.

For each $m$ we write $\PP_m$ to denote the $n-j$-plane such 
that $P_m$ is a connected component of $\Omega_j\cap \PP_m$.
We now consider two cases. 

{\bf Case 1}. There exists some $\delta>0$ and a subsequence $(m_q)$
such that 
$\PP_{m_q}\cap B_\delta (x_0) \subset \Omega_j $ for every $q$.

If this holds, then it follows from Lemma \ref{closure}, with
$\Omega$ replaced by $B_\delta(x_0)$, that 
there exists some $n-j$-plane in $B_\delta(x_0)$
that contains $x_0$, and on which
$w= \lim \xi_{m_q}$ a.e..  This however would imply that $\overline{w}(x_0) = \lim \xi_{m_q}$, which is impossible.

{\bf Case 2}. Next we suppose that Case 1 does not hold.

Then for every $q$ there is some
$m_q$ such that
\[
\PP_{m_q}\cap B_{1/q}(x_0) \not \subset \Omega_j.
\]
For $q$ large enough that $B_{1/q}(x_0)\subset \Omega  = \Omega_j\cup F_{>j}$,
it must then be the case that
$\bar P_{m_q} \cap F_{>j} \cap B_{1/q}(x_0)\ne \emptyset$. 
Let $y_{m_q} \in \bar P_{m_q} \cap F_{>j} \cap B_{1/q}(x_0)$.
%Then, since $y_{m_q} \in F_{>j}$, there some $j_1 \in \{j+1,\ldots, k\}$ and a 
%$n-j_1$-plane $Q_{q}$ in $\Omega_{n-k_1}$ such that $\overline{w} = Du = \overline{w}(y_{m_q})$
%a.e. in $Q_q$.

By Step 1,  we know that $\overline{w}(y_{m_q}) = \overline{w}(x_{m_q})$.

Also, by construction, $y_{m_q}\to x_0$ as $q\to \infty$.
Then, since $y_{m_q}\in F_{>k}$ for every $q$, it follows from 
the induction hypothesis that 
$\overline{w}(x_0) = \lim_{q\to \infty} \overline{w}(y_{m_q}) = \lim_{q\to \infty} \overline{w}(x_{m_q})$, which
is impossible in view of the choice of the sequence $(x_m)$. Hence $\overline{w}$ is continuous as claimed.
\end{proof}


\begin{thebibliography}{10}

%\bibitem{AFP}
%L.~Ambrosio, N.~Fusco, and D.~Pallara.
%\texitit{Functions of Bounded Variation and Free Discontinuity Problems},
%The Clarendon Press, Oxford University Press, New York, 2000.

%\bibitem{Bernig}
%\newblock A.~ Bernig.
%\newblock Support functions, projections and Minkowski addition of Legendrian cycles. 
%\newblock {\em Indiana Univ. Math. J.}, 55 No. 2, 443Ð464, (2006).
 
%\bibitem{BB}
%\newblock A.~Bernig and L.~Br\"ocker.
%\newblock Lipschitz-Killing invariants.
%\newblock {\em Math. Nachr.}, 245, 5--25, (2002). 


\bibitem{ah} D.R. Adams and L.I. Hedberg, \textit{Function Spaces and Potential Theory}, 
Grundlehren der mathematischen Wissenschaften (Book 314), Springer, 1999.

\bibitem{bori1}
Yu. F. Borisov,
\textit{The parallel translation on a smooth surface. III.},
Vestnik Leningrad. Univ. {\bf 14} (1959) no. 1, 34--50.


\bibitem{bori2}
Yu. F. Borisov,
\textit{Irregular surfaces of the class $C^{1,\beta}$ with an analytic metric},
 (Russian) Sibirsk. Mat. Zh. {\bf 45} (2004), no. 1, 25--61; 
translation in Siberian Math. J. {\bf 45} (2004), no. 1, 19--52.
 
  
  \bibitem{Cart} 
  \newblock E. Cartan, 
  \newblock {\em  Bull. Soc. Math. France.}  {\bf 46}, 125, (1919); {\bf 48}, 132, (1920).
  
%\bibitem{C-St.M}
%\newblock D.~Cohen-Steiner and J.~M.~Morvan.
%\newblock Second fundamental measure of geometric sets and local approximation of curvatures. 
%\newblock {\em J. Differential Geom.},  74, no. 3, 363--394, (2006). 
 
 
\bibitem{ChL}  S.S. Chern and R.K. Lashof, \textit{On the
Total Curvature of Immersed Manifolds}, 
American Journal of Mathematics,  {\bf 79}, (1957) No. 2., 306--318. 
\bibitem{CDS} 
S. Conti, C. De Lellis and L. Sz\'ekelyhidi Jr., \textit{$h$-principle and rigidity for $C^{1,\alpha}$ isometric embeddings},
To appear in the Proceedings of the Abel Symposium 2010.


%\bibitem{DS} C. De Lellis and  L. Sz\'ekelyhidi Jr., \textit{Dissipative continuous Euler flows},  Invent. Math. {\bf} 193 (2013), no. 2, 377--407. 

\bibitem{eg}
L.C. Evans and R.F. Gariepy, \textit{Measure theory and fine properties of functions},
Studies in Advanced Mathematics, CRC Press, 1992.

 
\bibitem{federer}
\newblock H.~Federer,
\textit{Geometric Measure Theory},
\newblock Springer-Verlag, Berlin, 1969.

%\bibitem{fleming}
%\newblock W.H.~Fleming.
%\newblock Flat chains over a finite coefficient group.
%\newblock {\em Trans. Amer. Math. Soc.}, 121, 160-186  (1966).



\bibitem{FM} I. Fonseca and J. Mal\'y, \textit{From Jacobian to Hessian: distributional form and relaxation},  
Riv. Mat. Univ. Parma, {\bf 7},  (2005), 4*, 45--74.

\bibitem{FJMgeo} G. Friesecke, R. James and S. M\"uller,
\textit{A theorem on geometric rigidity and the derivation of nonlinear
plate theory from three dimensional elasticity}, Comm. Pure. Appl. Math.,
{\bf 55} (2002), 1461--1506.

\bibitem{fu1}
\newblock J.H.G.~Fu,
\textit{Monge-Amp\`ere functions I}, 
\newblock {\em Indiana Univ. Math. Jour.}, {\bf 38}, no. 3, (1989), 745--771.

%\bibitem{gm} 
%\newblock M.~Giaquinta and  G.~Modica. 
%\newblock On sequences of maps with equibounded energies. 
%\newblock {\em Calc. Var. Partial Differential Equations}, 12, no. 3, 213-222 (2001).

\bibitem{gms} 
\newblock M.~Giaquinta, G.~Modica and J.~Soucek, 
\textit{Cartesian Currents in the Calculus of 
Variations I}, 
\newblock Springer-Verlag, New York, 1998. 


%\bibitem{gromov} M. Gromov, \textit{Partial differential relations}, Ergebnisse der Mathematik und ihrer Grenzgebiete (3)  [Results in Mathematics and Related Areas (3)], 9. Springer-Verlag, Berlin, 1986.

\bibitem{HartmanNirenberg}
P. Hartman and L. Nirenberg,
\textit{On spherical image maps whose Jacobians do not change sign}, {Amer. J. Math.,}
{\bf 81} (1959), 901--920.

\bibitem{Ho0} P. Hornung, \textit{Approximating $W^{2,2}$ isometric immersions},
C. R. Math. Acad. Sci. Paris, {\bf 346} (2008), no. 3-4, 189--192.

\bibitem{Ho1} P. Hornung, \textit{Fine level set structure of flat isometric immersions},
Arch. Rational Mech. Anal., {\bf 199} (2011), 943--1014.

\bibitem{Ho2} P. Hornung, \textit{Approximation of flat $W^{2,2}$ isometric immersions
by smooth ones},
Arch. Rational Mech. Anal., {\bf 199} (2011), 1015--1067.

\bibitem{Je1} R.L. Jerrard, \textit{Some remarks on Monge-Amp\`ere functions}, Singularities in PDE and the calculus of variations, 
CRM Proc. Lecture Notes, {\bf 44}, (2008), 89--112. 

\bibitem{J2010} R.L. Jerrard, \textit{Some rigidity results related to Monge-Amp\`ere functions}, Canad. J. Math. {\bf 62}, no. 2, (2010), 320--354


\bibitem{kirchheimthesis}
B.~Kirchheim,
\textit{Geometry and {R}igidity of {M}icrostructures.}
Habilitation Thesis, Leip\-zig, (2001), Zbl pre01794210.

\bibitem{kirchhoff}
G.~Kirchhoff. \textit{{\"U}ber das gleichgewicht und die bewegung einer elastischen
  scheibe},
{\em {J}. {R}eine {A}ngew. {M}ath.},  (1850), 51--88.

\bibitem{kuiper} 
N. H. Kuiper, \textit{On $C^1$-isometric imbeddings. {I}, {II}.}, Nederl. Akad. Wetensch. Proc. Ser. A. {\bf 58},  
(1955) 545--556, 683--689. 


\bibitem{lepa_noneuclid} M. Lewicka and M. Pakzad,
    \textit{Scaling laws for non-Euclidean plates and the $W^{2,2}$
      isometric  immersions of Riemannian metrics}, ESAIM:
    Control, Optimisation and Calculus of Variations (2010).

%\bibitem{LePa-flex} M. Lewicka and M.R. Pakzad, \textit{$C^1$-weak solutions to the distributional Monge-Amp\`ere equation}, in preparation.   

\bibitem{LM-private} Z. Liu and J. Mal\'y, private communication. 

\bibitem{LP} Z. Liu and  M. R. Pakzad, 
\textit{Rigidity and regularity of co-dimension one Sobolev isometric immersions}, 
To appear in Ann. Scuola Norm. Sup. Pisa Cl. Sci. (5),   
DOI Number: 10.2422/2036-2145.201302\textunderscore 001, \url{http://arxiv.org/pdf/1302.0075v2.pdf}


\bibitem{MalyMartio}
{J. Mal\'y and O. Martio}, 
\textit{Lusin's condition (N) annd mappings of the class $W^{1,n}$,}
\newblock {\em J. Reine Angew. Math.}, {\bf 458}, (1995), 19--36.

\bibitem{meyers} N.G. Meyers, \textit{Continuity of Bessel potentials}, 
Israel Journal of Mathematics, {\bf 11}, (1972), Issue 3, 271--283.

\bibitem{MuPa2} S. M\"uller and M.R. Pakzad, \textit
{Regularity properties of isometric immersions},  Math. Z., {\bf 251} (2005), no. 2, 313--331  .


%\bibitem{Nash1} J. Nash, \textit{The imbedding problem for Riemannian manifolds}, Ann. Math., {\bf 63}, (1956), 20--63. 

\bibitem{Nash2} J. Nash, \textit{$C^1$ isometric imbeddings}, Ann. Math., {\bf 60}, (1954), 383--396. 


\bibitem{Pak} M.R. Pakzad, \textit{On the Sobolev space of isometric immersions},
J. Differential Geom., {\bf 66},  (2004) no. 1, 47--69.


\bibitem{Po 56} A.V. Pogorelov, 
\textit{Surfaces with bounded extrinsic curvature} (Russian), Kharhov, 1956.

\bibitem{Po 73} A.V. Pogorelov, 
\textit{Extrinsic geometry of convex surfaces}, Translation of mathematical
monographs vol. 35, American Math. Soc., 1973.

\bibitem{solomon}
B.~Solomon,
\textit{new proof of the Closure Theorem for integral currents}, 
\newblock {\em Indiana Univ. Math. Jour.}, {\bf 33}, no. 3, (1984), 393--418.

%\bibitem{} M. Spivak, \textit{A Comprehensive Introduction to Differential 
%Geometry, Vol V}, 2nd edition, Publish or Perish Inc., 1979.

\bibitem{VWKG} S.C. Venkataramani, T.A. Witten, E.M. Kramer and R.P. Geroch, 
\textit{Limitations on the smooth confinement of an unstretchable manifold},  J. Math. Phys. 
{\bf 41}, no. 7, (2000), 5107--5128. 

 
\bibitem{Ziemer} 
\newblock W.~P.~Ziemer
\textit{Weakly differentiable functions. Sobolev spaces and functions of bounded variation,} 
\newblock Graduate Texts in Mathematics, 120. Springer-Verlag, New York, 1989. 



\end{thebibliography}
\end{document}